\def\MR#1{}
\def\Z{\mathbb{Z}}
\def\Q{\mathbb{Q}}
\def\C{\mathbb{C}}
\def\F{\mathbb{F}}
\def\Gal{\operatorname{Gal}}
\def\GL{\operatorname{GL}}
\def\SL{\operatorname{SL}}
\def\GSp{\operatorname{GSp}}
\def\det{\operatorname{det}}
\def\tr{\operatorname{tr}}
\def\Im{\operatorname{Im}}
\def\Frob{\operatorname{Frob}}
\def\Aut{\operatorname{Aut}}
\newcommand\sm[1]{\begin{psmallmatrix}#1\end{psmallmatrix}}
\newcommand\m[1]{\begin{pmatrix}#1\end{pmatrix}}
\def\ov#1{\overline{#1}}
\def\wt#1{\widetilde{#1}}
\def\cl#1{\mathcal{#1}}
\def\thin{{\hskip 1pt}}
\def\fr#1{\mathfrak{#1}}
\def\ol#1{\overline{#1}}
\def\Stab{\operatorname{Stab}}
\def\End{\operatorname{End}}
\def\id{\operatorname{id}}
\def\N{\mathrm{N}}
\def\ord{\mathrm{ord}}
\def\Ker{\mathrm{Ker}}
\newcommand{\simarrow}{\xrightarrow{\;\sim\;}}
\theoremstyle{plain}
\newtheorem{theorem}{Theorem}[section]
\newtheorem{corollary}[theorem]{Corollary}
\newtheorem{lemma}[theorem]{Lemma}
\newtheorem{proposition}[theorem]{Proposition}
\theoremstyle{definition}
\newtheorem{definition}[theorem]{Definition}
\theoremstyle{remark}
\newtheorem{remark}[theorem]{Remark}
\title[Ordinary primes for $\GL_2$-type abelian varieties and weight $2$ modular forms]{Ordinary primes for $\GL_2$-type abelian varieties and \\ weight $2$ modular forms}
\author[Wang]{Tian Wang}
\address{Department of Mathematics \&
Statistic, Concordia University, Montreal, Quebec, Canada}
\email{tian.wang@concordia.ca}
\author[Zhang]{Pengcheng Zhang}
\address{Max Planck Institute for Mathematics, Vivatsgasse 7, 53111 Bonn, Germany}
\email{pzhang@mpim-bonn.mpg.de}
\date{\today}
\subjclass[2010]{Primary  11G10, 11R45, 11F80, 11F30; Secondary 14K15.}
\begin{document}

\begin{abstract}
Let $A$ be a $g$-dimensional abelian variety defined over a number field $F$. It is conjectured that the set of ordinary primes of $A$ over $F$ has positive density, and this is known to be true when $g=1, 2$,  or for certain abelian varieties with extra endomorphisms. In this paper, we extend the family of abelian varieties 
whose sets of ordinary primes have positive density. Specifically, we show that if the endomorphism algebra of $A$ contains a number field $K$ of degree $g$, then under certain conditions on the fields $F$ and $K$, the set of ordinary primes of $A$ over $F$ has positive density. This includes  $\GL_2$-type abelian varieties over $\Q$ (resp.~quadratic number fields) of dimension $q$ or $2q$ (resp.~$q$) for any rational prime $q$. The proof is carried out in the general setting of compatible systems of Galois representations, and as a consequence, it also implies a positive density result for the sets of ordinary primes of certain modular forms of weight $2$. 
\end{abstract}

\maketitle

\section{Introduction}

Let $A$ be an abelian variety of dimension $g$ over a number field $F$. Let $v$ be a prime of $F$ that is of good reduction for $A$ and let $A_v$ denote the reduction of $A$ at $v$. Then, $A$ is said to \emph{have ordinary reduction at $v$} if $A_v$ is an ordinary abelian variety over $\F_v$. Equivalently, if we let $p$ be the rational prime lying below $v$ and let $P_{A,v}(X)$ denote the characteristic polynomial of the Frobenius endomorphism of $A_v$, then $A$ has ordinary reduction at $v$ if the coefficient $a_{v,g}(A)$ of $X^g$ in $P_{A,v}(X)$ is not divisible by $p$, i.e., $p\nmid a_{v,g}(A)$. We will use the phrases ``$A$ has ordinary reduction at $v$'', ``$A$ is ordinary at $v$'', and ``$v$ is an ordinary prime of $A$ over $F$'' interchangeably. In the case when $A/\Q$ is an elliptic curve, a rational prime $v=p$ is an ordinary prime of $A$ if and only if $p\nmid a_{p, 1}(A)$, where $a_{p, 1}(A)=p+1-|A_p(\F_p)|$ is the Frobenius trace of $A$ at $p$. When $p\geq 5$, this is also equivalent to that $a_{p,1}(A)\neq 0$ by the Hasse--Weil bound. 

The study of ordinary primes for abelian varieties plays a key role in arithmetic geometry.  For example, ordinary abelian varieties over $\ol{\F_v}$ are generic in the moduli space of all principally polarized abelian varieties  \cite{NoOo1980}, they exhibit canonical lifting properties to characteristic 0 via Serre--Tate theory \cite{Ka1981}, and they are crucial in the proof of Hecke orbit conjecture for certain Shimura varieties \cite{Chai2005, Chai2006}.

A conjecture commonly attributed to Serre  \cite[\#133]{Se2013} predicts that there are infinitely many ordinary primes for $A/F$. Furthermore, it is conjectured that the density of ordinary primes for $A$ over $\ol{F}$ is 1 (see e.g., \cite[\S 7]{Pink1998}). The conjecture is known when the dimension of $A$ is 1~or~2 \cite{Se2013, Ogus1982, Sawin2016}. For dimension greater than $2$, the known cases typically assume that $A$ has extra endomorphisms. Our results are also in this direction: we prove that this conjecture holds for certain higher dimensional ``$\GL_2$-type'' abelian varieties. In \Cref{result-comparison}, we will discuss other known cases of the conjecture in higher dimensions as well as draw comparison to our results.

The main results we have are the following. All the results concern ``$\GL_2$-type'' abelian varieties: the first two are for varieties of specific dimensions over $\Q$ and quadratic number fields, respectively, and the last one is for varieties over general number fields with no restriction on the dimension but subject to certain conditions on the splitting behaviors of primes. Throughout, we denote by $\End_F(A)$ the ring of $F$-endomorphisms of an abelian variety $A/F$.

\begin{theorem}
\label{ablian-var-theorem-2}
Let $A/\Q$ be an abelian variety of dimension $q$ (resp.~$2q$) for some rational prime~$q$.  
Assume that $\End_{\Q}(A)\otimes\Q$ contains a number field $K$ of degree $q$ (resp.~$2q$). Then, the set of ordinary primes of $A$ over $\Q$ has positive density. 
\end{theorem}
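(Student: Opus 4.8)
\emph{Proof plan.} I would work throughout with the strictly compatible system $\{\rho_\lambda\}_\lambda$ of two‑dimensional $\lambda$‑adic representations of $G_\Q$ attached to $A$: writing $g=\dim A$, the hypothesis $\End_\Q(A)\otimes\Q\supseteq K$ with $[K:\Q]=g$ makes the rational Tate module free of rank two over $K\otimes\Q_\ell$, so each $\rho_\lambda\colon G_\Q\to\GL_2(K_\lambda)$ has $\det\rho_\lambda=\chi_\ell\cdot\eps$ for a fixed finite order nebentypus $\eps$ (with values in $K^\times$, by a Kronecker argument applied to $\det\rho_\lambda(\Frob_p)/p$), and $\tr\rho_\lambda(\Frob_p)=a_p\in\cl O_K$ is independent of $\lambda$ for $p$ of good reduction. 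The first step is the elementary translation underlying \Cref{f-ordinary-iff-Af-ordinary}: from
\[
P_{A,p}(X)=\prod_{\sigma\colon K\hookrightarrow\ov{\Q}}\bigl(X^{2}-\sigma(a_p)X+p_\sigma\bigr),
\]
with each $p_\sigma$ equal to $p$ times a root of unity, expanding the product shows that the only term contributing to the coefficient $a_{p,g}(A)$ of $X^{g}$ which is not a multiple of $p$ is the one obtained by choosing the linear term from every factor; hence $a_{p,g}(A)\equiv(-1)^{g}N_{K/\Q}(a_p)\pmod p$, so that \emph{$A$ is ordinary at $p$ if and only if $p\nmid N_{K/\Q}(a_p)$}, i.e.\ if and only if no prime of $\cl O_K$ above $p$ divides $a_p$. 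I would dispose immediately of the case in which $A$ acquires complex multiplication over $\ov{\Q}$: then $A$ is isogenous to a power of a CM abelian variety, for which the positivity (indeed a computable value) of the density of ordinary primes is classical (see the references in the introduction). So assume henceforth that $A$ is non‑CM.

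Let $\wt K$ be the Galois closure of $K/\Q$ and view $G=\Gal(\wt K/\Q)$ as a transitive subgroup of $S_{g}$. The crucial group‑theoretic input — where the primality of $q$ is used — is that $G$ contains an element $\gamma$ every cycle of whose action on the $g$ embeddings has length at least $g/2$: either a $g$‑cycle, or (only when $g=2q$) an element of cycle type $(q,q)$. For $g=q$ this is immediate since $q\mid|G|$ forces a $q$‑cycle; for $g=2q$, an element of order $q$ (which exists by Cauchy) is either a $(q,q)$‑element or a $q$‑cycle fixing $q$ points, and one argues, again using that $q$ is prime and $G$ transitive, that a $(q,q)$‑element or a $g$‑cycle is always present. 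Fix such a $\gamma$ and let $\delta_\gamma>0$ denote the density of primes $p$ whose Frobenius has the cycle type of $\gamma$. For such a $p$, outside a finite exceptional set, if $A$ is \emph{not} ordinary at $p$ then some prime $\lambda\mid p$ of $\cl O_K$ divides $a_p$, and by the choice of $\gamma$ its residue degree $f(\lambda)$ is $g/2$ or $g$. Comparing $p^{f(\lambda)}\mid N_{K/\Q}(a_p)$ with the Weil bound $|N_{K/\Q}(a_p)|\le(2\sqrt p)^{g}=2^{g}p^{g/2}$ then yields, for all but finitely many such $p$, that $a_p=0$ when $\gamma$ is a $g$‑cycle, and that $N_{K/\Q}(a_p)\in\{0\}\cup\{\pm m\,p^{q}:1\le m\le 2^{2q}\}$ when $\gamma$ has cycle type $(q,q)$.

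To convert this into positive density, fix a large auxiliary prime $\ell$ (unramified in $K$, prime to the conductor) and primes $\mu\mid\ell$ of $\cl O_K$. Since $a_p\bmod\mu=\tr\bar\rho_\mu(\Frob_p)$, and $p\bmod\ell$ is recovered from $\det\bar\rho_\mu(\Frob_p)$ and the finite‑order nebentypus, the above constraint on $N_{K/\Q}(a_p)$, together with the cycle‑type condition at $\wt K$, becomes a condition on the image of $\Frob_p$ in $G\times\prod_{\mu\mid\ell}\bar\rho_\mu(G_\Q)$ — a union of conjugacy classes, which I call $\mathcal S_\ell$. By the theorems of Ribet and Momose on the images of Galois for non‑CM $\GL_2$‑type abelian varieties over $\Q$ — here the hypothesis $[K:\Q]\in\{q,2q\}$ with $q$ prime is used once more, to determine the subfield over which these images are large by ruling out extraneous inner twists — for all but finitely many $\ell$ the group $\bar\rho_\mu(G_\Q)$ contains $\SL_2$ over the appropriate residue field, so $\mathcal S_\ell$ occupies a proportion $O_{g}(1/\ell)$ of the $\gamma$‑cycle‑type stratum. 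Chebotarev then gives that
\[
\bigl\{\,p\ :\ \Frob_p\ \text{has the cycle type of}\ \gamma\ \text{and}\ A\ \text{is ordinary at}\ p\,\bigr\}
\]
has density at least $\delta_\gamma\bigl(1-O_{g}(1/\ell)\bigr)>0$ for $\ell$ large, which proves the theorem.

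The step I expect to be the main obstacle is turning the arithmetic constraint into a genuinely \emph{proper} Chebotarev condition and bounding $|\mathcal S_\ell|$: one must check that, for each of the $O(2^{2q})$ admissible values of $m$, the relation $N_{K/\Q}(\tr M)=\pm m\,\varpi^{q}$ — with $\varpi$ the reduction of $p$, determined by $\det M$ and the nebentypus — does not hold identically on the relevant stratum of $\prod_{\mu\mid\ell}\bar\rho_\mu(G_\Q)$, for which the large‑image input is needed in a precise form; this is exactly where the case $g=2q$ with $K$ admitting no convenient subfield — for instance when $\wt K/\Q$ has Galois group $A_{4}$ and $K$ is a primitive quartic field — demands the most delicate bookkeeping. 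A secondary point calling for care is the group‑theoretic lemma for $g=2q$: that a $(q,q)$‑element or a $g$‑cycle always occurs in a transitive subgroup of $S_{2q}$.
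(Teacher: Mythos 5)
Your plan takes a genuinely different route from the paper's proof of this particular theorem. The paper does not run the compatible-system argument on $A$ directly: it uses Ribet's decomposition of $\GL_2$-type abelian varieties over $\Q$ together with Serre's modularity conjecture (Khare--Wintenberger) to replace $A$ by varieties $A_f$ attached to weight-$2$ newforms, and then quotes \Cref{modular-form-theorem} via \Cref{f-ordinary-iff-Af-ordinary}. What you propose is essentially the paper's general machinery (\Cref{galois-rep-theorem}, \Cref{inert-in-K-theorem}, \Cref{field-of-degree-q-or-2q}) applied to $A$ itself, i.e.\ the route the paper reserves for \Cref{ablian-var-theorem-3}: your reduction of ordinariness to $p\nmid\N_{K/\Q}(a_p)$ is \Cref{ordinary-lambda-trace}, and your group-theoretic input is \Cref{transitive-2q} (the paper proves the stronger fact that a transitive subgroup of $S_{2q}$ always contains a $(q,q)$-element, so the $2q$-cycle alternative is not needed). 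One structural simplification you could borrow from the paper: there is no need for a joint Chebotarev condition coupling the splitting stratum in $\wt{K}$ with the mod-$\ell$ data; the exceptional sets are shown to have density zero outright, and one just subtracts them from the positive-density stratum, avoiding any compositum/entanglement bookkeeping.

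The genuine gap is the step you yourself flag as the main obstacle, and your sketch of it rests on a misconception. The primality of $[K:\Q]$ does not rule out inner twists (already for $q=2$ there are newforms and abelian surfaces with inner twists and quadratic coefficient field), and one cannot expect $\bar\rho_\mu(\Gal(\ov{\Q}/\Q))$ to contain $\SL_2(\F_\ell)$ independently for each $\mu\mid\ell$. The large-image statement actually available (\Cref{Galois-images}, or Ribet's theorem for $A_f$) holds only over a subfield $K'\subseteq K$ and only after restriction to a finite extension $F'$ of the base; consequently the components of the residual image at primes of $K$ above a common prime of $K'$ agree up to roots of unity of bounded order, and all determinants are linked (\Cref{residue-Galois-image-prop}). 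The claimed $O(1/\ell)$ proportion for the locus $\N_{K/\Q}(\tr)^2=\bar c\cdot\prod\det$ must therefore be verified inside this constrained group, not in a product of independent $\GL_2(\F_\ell)$'s with large image; this is exactly the content of \Cref{p2-case-asymptotics} and \Cref{density-of-S(c)}, and it is where the bulk of the work lies — in your proposal it is only asserted. Note also that in the inert ($g$-cycle) stratum you need density zero of $\{p: a_p=0\}$; for an $A$ not yet known to be modular this again comes from the same counting with $c=0$, not from Serre's lacunarity results for modular forms. Two smaller points: the identity $\det\rho_\lambda=\epsilon\chi_\ell$ with $\epsilon$ of finite order needs more than a one-line Kronecker argument (the paper invokes \Cref{lambda-adic-rep-determinant}), and the disposal of the geometrically CM case should be made precise for this paper's notion of ordinariness (all $\lambda\mid p$), as is done in \Cref{ordinary-primes-of-cm-modular-forms}.
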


\begin{theorem}
\label{abelian-var-quadratic-theorem}
Let $A/F$ be an abelian variety of dimension $q$ over a quadratic number field $F$  for some rational prime $q$. Assume that $\End_F(A)\otimes\Q$ contains a number field $K$ of degree $q$. Then, the set of ordinary primes of $A$ over $F$ has positive density.
\end{theorem}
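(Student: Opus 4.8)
The plan is to deduce \Cref{abelian-var-quadratic-theorem} from a general statement about compatible systems of $2$-dimensional $\lambda$-adic representations, which is where the actual work happens. First I would attach to $A$ its compatible system: since $K\subseteq\End_F(A)\otimes\Q$ with $[K:\Q]=q=\dim A$, the rational Tate modules $V_\ell A=\prod_{\lambda\mid\ell}V_\lambda A$ are free of rank $2$ over $K\otimes\Q_\ell=\prod_{\lambda\mid\ell}K_\lambda$, giving representations $\rho_\lambda\colon G_F\to\GL_2(K_\lambda)$ with $\det\rho_\lambda=\chi_\ell\cdot\eps$ for a fixed finite-order character $\eps$, conductor bounded independently of $\lambda$, and Hodge--Tate weights $\{0,1\}$ at each place above $\ell$. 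The next step is the elementary translation of ordinariness: for a prime $v\mid p$ of good reduction with $\N v=p^{f_v}$, factoring $P_{A,v}(X)=\prod_{\sigma\colon K\hookrightarrow\ol\Q}\p{X^2-\sigma(a_v)X+\sigma(d_v)}$, where $a_v\in\mathcal{O}_K$ is the Frobenius trace and $d_v$ has all archimedean absolute values $\N v$ (so $\lambda\mid p\mid\N v$ forces $\sigma(d_v)\equiv 0$), one gets $P_{A,v}(X)\equiv X^q\prod_\sigma(X-\sigma(a_v))\pmod p$, hence $a_{v,q}(A)\equiv(-1)^q\N_{K/\Q}(a_v)\pmod p$. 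Therefore (for $v$ outside a fixed finite set) $v$ is ordinary for $A$ if and only if $a_v\not\equiv 0\pmod\lambda$ for every $\lambda\mid p$, a condition intrinsic to the compatible system; so it suffices to prove the general compatible-system statement.

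The second ingredient is local $p$-adic Hodge theory together with the group theory at $p$. For $v\mid p$ of good reduction, $\rho_p|_{D_v}$ is crystalline with Hodge--Tate weights $\{0,1\}$, and by Fontaine--Laffaille theory (valid once $p$ exceeds a bound depending only on the fixed weights) ordinariness at $v$ is equivalent to $\ol\rho_p|_{D_v}$ being, up to semisimplification, the sum of the mod-$p$ cyclotomic character $\ol\chi_p$ and an unramified character on which Frobenius acts by a unit, equivalently to the crystalline Frobenius trace being a $\lambda$-unit for all $\lambda\mid p$ --- matching the trace description above. The heart of the proof is then controlling the image of $\ol\rho_p\colon G_F\to\prod_{\lambda\mid p}\GL_2(\F_\lambda)$. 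One may assume $A$ has no CM over $\ol F$, since the CM case yields density-one ordinary primes and is classical; then, by Faltings together with the classification of residual images for $\GL_2$-type abelian varieties (Ribet, Momose), for all $p$ outside a density-zero set the image contains $\prod_{\lambda\mid p}\SL_2(\F_\lambda)$ cut out by a fixed, $p$-independent, finite system of inner twists. Here the hypotheses are used decisively: $q=\dim A=[K:\Q]$ being prime forbids intermediate twist fields strictly between $\Q$ and $K$, and $[F:\Q]=2$ bounds the field over which the inner twists and the extra endomorphisms are realized. In particular, for such $p$ the image contains an element with nonzero trace in every factor and any determinant compatible with $\ol\chi_p$.

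To finish I would combine these via Chebotarev over $F$. Because $[F:\Q]=2$, a density-$\tfrac12$ set of rational primes $p$ splits in $F$, each contributing two primes $v$ of norm $p$, so it is enough to produce, for a positive density of such $p$, one prime $v\mid p$ at which $\ol\rho_p|_{D_v}$ is ordinary in the sense above; by the previous paragraph this is a non-empty, Frobenius-attainable local condition once the global image is large, and tracking the finitely many auxiliary congruences (the inner-twist field, the level, $p$ modulo a fixed modulus) through Chebotarev's theorem then yields the asserted positive density. The main obstacle I anticipate is exactly the control of $\ol\rho_p(D_v)$ for $v\mid p$: since $\ol\rho_p$ ramifies at $p$ one cannot apply Chebotarev to a Frobenius at $v$ directly, and one must instead show that for a positive proportion of $p$ the decomposition group at some $v\mid p$ is as large as the crystalline and Hodge--Tate constraints permit --- equivalently, that the ordinary locus is hit --- which forces one to handle, uniformly in $p$, both the entanglement between the components at the various $\lambda\mid p$ and the interaction of the inner-twist data with the local behaviour at $p$.
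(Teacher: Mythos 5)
Your reduction of ordinariness to the condition $\lambda\nmid(a_v)$ for all $\lambda\mid\N(v)$ is fine and matches \Cref{ordinary-lambda-trace}, but the core of your argument has a genuine gap, and it is exactly the one you flag yourself at the end. Your mechanism for producing ordinary primes is local at $p$: you want to show that for a positive proportion of $p$ (split in $F$) the restriction $\ol\rho_p|_{D_v}$ at some $v\mid p$ is ordinary, and you propose to get this from largeness of the global image of $\ol\rho_p$ ``via Chebotarev''. But Chebotarev only controls Frobenius conjugacy classes at primes where the representation is unramified, and $\ol\rho_p$ at $v\mid p$ is crystalline, not unramified: no amount of global image information plus Chebotarev pins down $\ol\rho_p(D_v)$, and saying the ordinary locus is ``Frobenius-attainable'' is an assertion, not an argument. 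Controlling the local-at-$p$ behaviour of the mod-$p$ representation uniformly in $p$ is precisely why the positive-density conjecture for ordinary primes is open in general, so as written the final step does not go through. (A secondary issue: your appeal to ``Ribet, Momose'' for the residual image is a statement about modular forms over $\Q$; over a quadratic $F$ one needs the abelian-variety image theorems, e.g.\ Banaszak--Gajda--Kraso\'n as in \Cref{Galois-images}, after first reducing to the absolutely simple, first-kind case.)

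The paper's route avoids the local analysis at $p$ entirely. After reducing to the case where $A$ is absolutely simple with totally real $K=\End_{\ol F}(A)\otimes\Q$ (using \cite[Proposition~1.5, 1.7]{Wu2020}; if $A$ becomes non-simple over $\ol F$ it is a power of a non-CM elliptic curve, where density one is known), one uses a purely field-theoretic fact (\Cref{quadratic-prime-splitting}): since $[F:\Q]=2$ and $[K:\Q]=q$ is an odd prime, there is a rational prime $p$ split in $F$ and inert in $K$, hence a positive density of such $p$ by \Cref{one-implies-positive-density}. For a degree-$1$ prime $v\mid p$ with $p$ inert in $K$, the Weil bound gives $|\N_{K/\Q}(a_v)|\leq 2^q p^{q/2}<p^q=\N(\lambda)$, so $\lambda\mid(a_v)$ forces $a_v=0$; thus every such $v$ is ordinary unless $a_v=0$ (\Cref{p1-lemma}). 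The only place the Galois image enters is to show $\{v: a_v=0\}$ has density $0$, and there Chebotarev is applied legitimately, to the residual representations at auxiliary primes $\ell\neq p$ with $\ell$ large (\Cref{Galois-images}, \Cref{density-of-S(c)} with $c=0$). If you want to salvage your write-up, replace your local-at-$p$ step with this splitting-plus-Hasse--Weil mechanism; otherwise you need a genuinely new idea to control $\ol\rho_p(D_v)$.
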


\begin{theorem}
\label{ablian-var-theorem-3}
Let $A/F$ be an abelian variety of dimension $g$ over a number field $F$. Assume that
\begin{enumerate}[label=\rm{(\arabic*)}]
    \item $A$ is absolutely simple, i.e., $A/\ol{F}$ is simple;
    \item $\End_F(A)\otimes \Q$ contains a number field $K$ of degree $g$; 
    \item $A/\ol{F}$ is not of type IV in Albert's classification.
\end{enumerate}
Suppose further that there exists a rational prime $p$ that lies below a degree $1$ prime of $F$ and either
\begin{enumerate}[label=\rm{(P\arabic*)}]
    \item remains inert in $K$, or
    \item splits into two distinct primes in $K$ of the same inertia degree.
\end{enumerate} 
Then, the set of ordinary primes of $A$ over $F$ has positive density. 
\end{theorem}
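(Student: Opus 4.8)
The plan is to realize the ordinary-prime condition inside the compatible system of $\lambda$-adic Galois representations of $A$ and to reduce it to a congruence for a single algebraic integer. Since $K$ has degree $g=\dim A$ inside $\End_F(A)\otimes\Q$, for each rational prime $\ell$ the rational Tate module $V_\ell A$ is free of rank $2$ over $K\otimes\Q_\ell\cong\prod_{\lambda\mid\ell}K_\lambda$, so for each prime $\lambda$ of $K$ we get a representation $\rho_\lambda\colon G_F\to\GL_2(K_\lambda)$; for a prime $v$ of good reduction with $v\nmid\ell$ the characteristic polynomial of $\rho_\lambda(\Frob_v)$ equals $X^2-a_vX+\psi_v$ with $a_v\in\mathcal O_K$ independent of $\lambda$ and $\psi_v=\N(v)\,\eta(\Frob_v)$ for a fixed finite-order character $\eta$, so that $P_{A,v}(X)=\prod_{\sigma\colon K\hookrightarrow\ol\Q}\bigl(X^2-\sigma(a_v)X+\sigma(\psi_v)\bigr)$. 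First I would observe that when $v$ has degree $1$ over a rational prime $p=p_v$ that is large enough (in particular unramified in $K$, prime to the order of $\eta$ and to the conductor, and $\geq5$), reduction mod $p$ kills every $\sigma(\psi_v)$ because $\N(v)=p$, whence $P_{A,v}(X)\equiv X^{g}\prod_\sigma\bigl(X-\sigma(a_v)\bigr)\pmod p$; comparing the coefficient of $X^g$ on both sides shows that $a_{v,g}(A)$ is congruent modulo $p$ to $\pm\N_{K/\Q}(a_v)$. Hence, for such $v$, $A$ is ordinary at $v$ if and only if $p\nmid\N_{K/\Q}(a_v)$, equivalently the image of $a_v$ in $\mathcal O_K/p$ is a unit, equivalently $\fp\nmid a_v$ for every prime $\fp$ of $K$ above $p$.

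The degree-$1$ hypothesis enters a second time through the Weil bound: $\abs{\sigma(a_v)}\le2\sqrt p$ for every archimedean $\sigma$, so if $p\mid a_v$ in $\mathcal O_K$ then $a_v/p$ is an algebraic integer all of whose conjugates have absolute value $<1$, forcing $a_v=0$ by Kronecker's theorem. Let $\mathcal V$ be the set of degree-$1$ primes $v$ of $F$ of good reduction whose rational prime $p_v$ is large, unramified in $K$, and either inert in $K$ (case (P1)) or split in $K$ into two distinct primes of the same inertia degree (case (P2)); this is a Frobenius condition on $v$ inside $\Gal(L/\Q)$ for $L$ the Galois closure of $FK/\Q$, and the existence hypothesis on $p$ says precisely that the corresponding conjugation-stable subset of $\Gal(L/\Q)$ is nonempty, so by the Chebotarev density theorem $\mathcal V$ has positive density among the primes of $F$. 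In case (P1), $p_v\mathcal O_K$ is prime, so by the first paragraph a prime $v\in\mathcal V$ is ordinary precisely when $a_v\neq0$.

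It remains to show that a positive proportion of $v\in\mathcal V$ are ordinary. The key input is largeness of the monodromy: because $A$ is absolutely simple (1), carries the degree-$g$ field $K$ in its endomorphism algebra (2), and is \emph{not} of Albert type IV (3), it is of $\GL_2$-type of type I, II or III, and by the theory of $\ell$-adic monodromy for such abelian varieties the trace-zero locus is a proper subvariety of the Zariski closure of $\rho_\lambda(G_F)$ in $\GL_2(K_\lambda)$; consequently, by Serre's $\ell$-adic Chebotarev/equidistribution theorem, the set $\set{v:a_v(A)=0}$ has density zero. (Hypothesis (3) is essential here: in the CM case the trace-zero locus fills out a whole connected component, and indeed CM elliptic curves have positive density of supersingular primes.) In case (P1) this finishes the proof, since every $v\in\mathcal V$ with $a_v\neq0$ is ordinary and these have the same positive density as $\mathcal V$. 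In case (P2), write $p_v\mathcal O_K=\fp_1\fp_2$ with $\fp_1,\fp_2$ of equal inertia degree; then $v\in\mathcal V$ is non-ordinary exactly when $\fp_1\mid a_v$ or $\fp_2\mid a_v$, and since $\fp_1\fp_2=p_v\mathcal O_K$ the case where both divide $a_v$ again forces $a_v=0$, a density-zero event. I expect the remaining case ``exactly one of $\fp_1,\fp_2$ divides $a_v$'' to be the main obstacle: here $\fp_1\mid a_v$ does not force $a_v=0$, so one must show that these primes do not exhaust $\mathcal V$ up to density zero, which I would deduce from an equidistribution statement of Sato--Tate type for the normalized values $\bigl(\sigma(a_v)/\sqrt{p_v}\bigr)_\sigma$ (available in the $\GL_2$-type cases at hand): this makes $\abs{\N_{K/\Q}(a_v)}$ typically of size $\asymp p_v^{g/2}$ and suitably equidistributed, hence coprime to $p_v$ for a positive proportion of $v\in\mathcal V$, and such $v$ are ordinary by the first paragraph.
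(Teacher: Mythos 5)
Your setup (the $\lambda$-adic representations, the identification ``$v$ ordinary $\Leftrightarrow p\nmid\N_{K/\Q}(a_v)$'' via the coefficient of $X^g$, the Kronecker/Weil-bound argument showing that $p\mid a_v$ forces $a_v=0$ for degree-$1$ primes, and the Chebotarev argument producing a positive density of degree-$1$ primes with the prescribed splitting in $K$) matches the paper's reductions (Proposition~\ref{ordinary-lambda-trace}, Lemma~\ref{p1-lemma}/\ref{p2-lemma}, Proposition~\ref{one-implies-positive-density}). But the proof has a genuine gap exactly where you yourself flag ``the main obstacle,'' namely case (P2) when exactly one of $\fp_1,\fp_2$ divides $a_v$. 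Your proposed fix --- a Sato--Tate-type equidistribution for the normalized conjugates of $a_v$ --- is not available for this class of abelian varieties, and even granting it, the deduction you sketch does not work: that $\abs{\N_{K/\Q}(a_v)}$ is ``typically of size comparable to $p_v^{g/2}$'' says nothing about the $p$-adic condition $\fp_1\mid a_v$, which is a congruence modulo a prime of norm $p_v^{g/2}$; an integer of size about $p^{g/2}$ can perfectly well be divisible by $p^{g/2}$, and that is precisely what happens at non-ordinary primes. The paper's route is different and is the real content of the proof: if $v$ is non-ordinary with $a_v\neq 0$, then $\N_{K/\Q}(a_v)^2=c\,\N(v)^g$ for one of finitely many integers $0\leq c\leq 2^{2g}$ (Lemma~\ref{p2-lemma}), and each locus $S(c)$ is shown to have density $0$ by an explicit analysis of the images of the residual representations modulo auxiliary primes $\ell$ (Theorem~\ref{Galois-images}, Proposition~\ref{residue-Galois-image-prop}), a matrix count (Lemmas~\ref{countinglemma}--\ref{p2-case-asymptotics}) giving a bound $\ll 1/\ell$, and Chebotarev --- an argument that must also cope with the image only being large after passing to finite extensions $F'/F$ and $K/K'$ (endomorphisms and polarization need not behave well over $F$ itself, and inner-twist-type phenomena make the image over $F$ possibly disconnected). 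Nothing in your proposal substitutes for this step.

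A secondary, smaller issue is in your density-zero claim for $\{v: a_v=0\}$ in case (P1): ``the trace-zero locus is a proper subvariety of the Zariski closure of the image'' is not sufficient for Serre's equidistribution argument --- one needs that it contains no connected component of the monodromy group over $F$ (this is exactly what fails for CM). Under hypotheses (1)--(3) this can be justified, e.g.\ by noting that no coset of a subgroup containing $\SL_2(\cl{O}_{K'}\otimes\Z_\ell)$ has identically vanishing trace, but you do not argue it; the paper instead obtains it as the $c=0$ case of the same quantitative computation (Corollary~\ref{density-of-S(c)}). So case (P1) is essentially right modulo this justification, while case (P2) is missing its key ingredient.
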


\begin{remark}
\label{result-comparison}
We make the following remarks on \Cref{ablian-var-theorem-2}, \Cref{abelian-var-quadratic-theorem}, and \Cref{ablian-var-theorem-3}.
\begin{enumerate}[label=(\arabic*)]
    \item The ``$q$'' case of \Cref{ablian-var-theorem-2}, \Cref{abelian-var-quadratic-theorem}, and the (P1) case of \Cref{ablian-var-theorem-3} are in a way immediate consequences of simple analysis of the traces of Frobenius. However, one needs a more detailed analysis of the Galois representations attached to $A$ in order to prove the ``$2q$'' case of \Cref{ablian-var-theorem-2} and the (P2) case of \Cref{ablian-var-theorem-3}. In these cases, we first prove a result on the surjectivity of the Galois representations attached to $A$ (see \Cref{Galois-images}) and then use it to deduce the positive density result.
    \item In \cite{Fi2024}, Fit\'{e} showed that the sets of ordinary primes of certain abelian varieties with extra endomorphisms have positive density. In particular, our results can be viewed as certain generalizations of \cite[Theorem~3]{Fi2024} from dimension $3$ to certain other dimensions, though the techniques we use are quite different from the ones by Fit\'{e}.
    \item In \cite{Su2020}, Suh proved some results on the positive density of the sets of ordinary primes for certain Hilbert modular forms of parallel weight $2$.\footnote{Hilbert modular forms of parallel weight $2$ conjecturally correspond to certain abelian varieties of $\GL_2$-type via some generalization of the Eichler--Shimura theory.} One should compare the ``$2q$'' case of \Cref{ablian-var-theorem-2} and the (P2) case of \Cref{ablian-var-theorem-3} with the conditional result \cite[Theorem.~(d), p.649]{Su2020}. Our proof in these cases ultimately reduces to showing that $\{v:\N_{K/\Q}(a_v)^2\cdot\N(v)^{-g}\in\Z\}$ has density $0$, where $a_v$ is the trace of Frobenius at $v$ of some $2$-dimensional Galois representation attached to $A$. This should follow if one assumes some strong conjectural variant of the Sato--Tate conjecture as considered in \cite[Section~3.3]{Su2020}. In our case, we prove this unconditionally by a careful analysis of the image of the Galois representation attached to $A$.
    \item The essential parts of our proofs depend only on the properties of the traces of Frobenius and the Galois representations attached to these abelian varieties. %rather than the abelian varieties themselves.
    We thus formulate those parts separately as \Cref{galois-rep-theorem} and \Cref{inert-in-K-theorem}. Indeed, one immediate benefit is that one can apply these to obtain similar results on modular forms and Hilbert modular forms (see \Cref{modular-form-theorem} and \Cref{galois-rep-theorem-remark} (2)).
    \item Other known cases of the positive density conjecture on the ordinary primes include the case when $A/F$ is a CM abelian variety \cite[Remark 12]{Fi2024} (i.e., the geometric endomorphism algebra $\End_{\ol{F}}(A)\otimes \Q$ is a CM field of degree $2g$) or when the Mumford--Tate group of $A$ is small in the sense of \cite[Theorem 2.2, p. 168]{Noot1995} or \cite[Theorem 7.1]{Pink1998}. More recently,  Cantoral Farf\'an, Li, Mantovan, Pries, and Tang \cite{CLMPT2025} established a positive density result for the sets of ordinary primes of certain abelian varieties parametrized by unitary Shimura curves with simple signature.
\end{enumerate}
\end{remark}

\begin{remark}
We provide an example  of  a family of hyperelliptic curves whose Jacobians satisfy the condition (P1) \text{or (P2).} 
Consequently, the sets of ordinary primes of these abelian varieties have positive density.

The family we consider is taken from \cite{TaToVe1991}. Let $p$ be a rational prime with $p\neq 2$ or $5$.
Let $\zeta_p$ denote the primitive $p$-th root of unity in $\ol{\Q}$, $g(X)\in \Z[X]$ be the minimal polynomial of $-(\zeta_p+\zeta_p^{-1})$, and   $f_t(X):= Xg(X^2-2)+t\in \ol{\Q}[X]$ for any $t\in \ol{\Q}$. The equation 
\[
C_t: y^2=f_t(x)
\]
characterizes a family of absolutely simple hyperelliptic curves of genus $\frac{p-1}{2}$ \mbox{\cite[Corollary 6]{TaToVe1991}}. For each curve $C_t$, the Jacobian variety  $J_{C_t}$ is defined over $F:=\Q(t)$, of dimension $\frac{p-1}{2}$, and  contains the totally real field $K:=\Q(\zeta_p+\zeta_p^{-1})$ in its  endomorphism algebra \cite[Theorem 1]{TaToVe1991}. 
Note that $[K:\Q]=\frac{p-1}{2}=\dim J_{C_t}$.  Since the Galois extension $K/\Q$ is cyclic when $p\geq 3$, if $\ell$ is inert in $K$, then the order of $\ell \pmod p$ is divisible by $\frac{p-1}{2}$, and if $\ell$ splits into two primes of the same inertia degree, then the order of $\ell\pmod p$ is  divisible by $\frac{p-1}{4}$ (if $p\equiv 1\pmod{4}$). Clearly, such a prime $\ell$ exists.   Thus, the condition (P1) is satisfied if $p\geq 3$, and the condition (P2) is satisfied if $p\equiv 1\pmod{4}$.
Finally, by choosing a number field $F$  such that $\ell$ lies below a degree $1$ prime of $F$ and taking $t$ as the primitive element of $F$, we see that $\{J_{C_t}\}_{t\in F}$ is a family of abelian varieties whose sets of ordinary primes over $F$ have positive density. 
\end{remark}

A comparable notion of ``ordinary" also appears in the study of modular forms. Let $f$ be a newform of weight $k\geq 2$ and level $N$, let $a_n(f)$ be the $n$-th Fourier coefficient of $f$, and let $K_f$ be the coefficient field of $f$. For a rational prime $p\nmid N$ and a prime $\lambda|p$ of $K_f$, $f$ is called \emph{$\lambda$-ordinary} if $\lambda \nmid (a_p(f))$, and $f$ is called \emph{$p$-ordinary} if $f$ is $\lambda$-ordinary for all $\lambda|p$, or equivalently, $\lambda \nmid(a_p(f))$ for all primes $\lambda |p$ of $K_f$. We will use the phrases ``$f$ is $p$-ordinary'', ``$f$ is ordinary at $p$'', and ``$p$ is an ordinary prime of $f$ over $\Q$" interchangeably.

The notion of ordinary modular forms first appeared in a paper by Hida \cite{Hi1981-ord}. This paper was followed by a series of works by Hida, where he established the theory of ordinary $p$-adic modular forms, or nowadays known as Hida theory.  The ordinary assumption has since played an important role in the theory of ($p$-adic) modular forms, particularly in Iwasawa theory, e.g., in Emerton--Pollack--Weston \cite{EmPoWe2006} and Skinner--Urban \cite{SkUr2014} (and many others). The definition of ordinariness that we adopt in this paper is, however, different from (but closely related to) Hida's definition. Roughly speaking, Hida's definition of $p$-ordinariness only requires $a_p(f)$ to be a $p$-adic unit under a prechosen embedding $K_f\hookrightarrow\ov{\Q_p}$, while our definition requires $a_p(f)$ to be a $p$-adic unit under all embeddings $K_f\hookrightarrow\ov{\Q_p}$. In particular, the two definitions coincide when $K_f=\Q$. The reason for us to adopt a different definition is that a weight $2$ newform $f$ is ordinary at $p$ under our definition if and only if the abelian variety $A_f/\Q$ associated to $f$ via the Eichler--Shimura theory is ordinary at $p$ (see \Cref{f-ordinary-iff-Af-ordinary}).

Under this definition, we obtain the following result on ordinary primes of modular forms.

\begin{theorem}
\label{modular-form-theorem}
Let $f$ be a newform of weight $2$ and level $N$ and let $K_f$ denote the coefficient field of~$f$. Suppose that $[K_f:\Q]=q$ or $2q$ for some prime~$q$. Then, the set of ordinary primes of $f$ over~$\Q$ has positive density.

More generally, if there exists a rational prime $p$ that remains inert in $K_f$ or splits into two distinct primes in $K_f$ of the same inertia degree, then the set of ordinary primes of $f$ over $\Q$ has positive density.
\end{theorem}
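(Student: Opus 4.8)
The plan is to transfer the statement to the abelian variety $A_f/\Q$ attached to $f$ by the Eichler--Shimura construction. Recall that $A_f$ has dimension $g:=[K_f:\Q]$, that $\End_\Q(A_f)\otimes\Q$ contains $K_f$, and that by \Cref{f-ordinary-iff-Af-ordinary} a prime $p\nmid N$ is ordinary for $f$ in our sense precisely when $A_f$ is ordinary at $p$; so it suffices to show that the set of ordinary primes of $A_f$ over $\Q$ has positive density. Moreover, the hypothesis of the second assertion — the existence of a rational prime inert in $K_f$, or splitting into two primes of equal inertia degree in $K_f$ — is exactly condition (P1) or (P2) of \Cref{ablian-var-theorem-3} for the pair $(A_f,K_f)$, the ``degree $1$ prime'' requirement being automatic over $\Q$.

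For the first assertion, when $[K_f:\Q]=q$ or $2q$ the variety $A_f/\Q$ satisfies the hypotheses of \Cref{ablian-var-theorem-2} verbatim, and we are done. For the second assertion, the first move is to apply \Cref{ablian-var-theorem-3} directly to $A_f/\Q$: hypothesis (2) holds by construction and the auxiliary prime is furnished by hypothesis, so if in addition $A_f$ is absolutely simple (hypothesis (1)) and of the first kind (hypothesis (3)), we conclude at once. It then remains to cover the two ways in which these conditions can fail which, by Ribet's analysis of the Galois representations of newforms, are exactly the cases where $f$ has complex multiplication and where $f$ carries nontrivial inner twists.

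If $f$ has CM, then $A_f$ is $\ov\Q$-isogenous to a power of a CM abelian variety, hence ordinary at $p$ exactly when that CM variety is, and such $p$ form a positive-density set by the known CM case of the conjecture \cite[Remark 12]{Fi2024} (a Chebotarev argument in the reflex field). If instead $f$ has inner twists but no CM, then over a suitable number field $F'$ there is an isogeny $A_{f,F'}\sim B^m$ with $B/F'$ absolutely simple, of the first kind, and carrying a subfield $K'$ of degree $\dim B$ in $\End_{F'}(B)\otimes\Q$; one would then extract from the splitting hypothesis on $K_f$ a rational prime lying below a degree $1$ place of $F'$ with the corresponding splitting behaviour in $K'$, apply \Cref{ablian-var-theorem-3} to $B/F'$, and translate back using that $A_f$ is ordinary at $p$ if and only if $B$ is ordinary at every place of $F'$ above $p$.

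This inner-twist step is where I expect the real difficulty to lie: the field $K'$ sits inside a possibly ramified quaternion algebra over a subfield of $K_f$ and is not obviously governed by how primes factor in $K_f$, and one must also check that ordinariness propagates back to $\Q$ without destroying positive density under a possibly ramified extension of primes. For this reason I would organize the proof so as to bypass $A_f$ entirely: establish a single density theorem for compatible systems $\{\rho_\lambda\}$ of two-dimensional $\lambda$-adic Galois representations whose determinant is the cyclotomic character times a finite-order character — absorbing into that argument the analysis of the decomposition of the system over $\ov\Q$ (dihedral under CM, controlled by the inner-twist data otherwise) — and then observe that the system $\{\rho_{f,\lambda}\}_\lambda$ satisfies this hypothesis automatically, the finite-order factor being the nebentypus of $f$. \Cref{modular-form-theorem} then follows as a special case, with $A_f$ and \Cref{f-ordinary-iff-Af-ordinary} entering only to identify the ordinary primes of $f$ with those of $A_f$.
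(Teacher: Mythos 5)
Your route has two genuine problems. First, for the case $[K_f:\Q]=q$ or $2q$ you invoke \Cref{ablian-var-theorem-2} applied to $A_f$; but in this paper \Cref{ablian-var-theorem-2} is itself \emph{deduced from} \Cref{modular-form-theorem} (via Serre's modularity conjecture and \Cref{f-ordinary-iff-Af-ordinary}), so within the paper's logical architecture this step is circular. The intended reduction is the cheaper one: by \Cref{field-of-degree-q-or-2q}, a coefficient field of degree $q$ (resp.\ $2q$) automatically admits a rational prime that is inert (resp.\ splits into two primes of equal inertia degree), so the first assertion is an immediate consequence of the second, with no appeal to abelian-variety theorems. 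Second, and more seriously, your treatment of the non-CM case with inner twists is a plan rather than a proof. You correctly diagnose that pushing the statement through $A_f$ and \Cref{ablian-var-theorem-3} breaks down there (absolute simplicity can fail, and the splitting hypothesis lives in $K_f$ while the geometric endomorphism data involves a smaller field $K'$ and a larger base field $F'$), but the ``single density theorem for compatible systems'' you then propose is neither stated precisely nor established, and as described (absorbing the CM/dihedral analysis into its hypotheses) it is strictly stronger than anything proved in the paper. Since the inner-twist case is exactly where the content of the theorem lies, the proposal does not yet prove it.

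For comparison, the paper never routes \Cref{modular-form-theorem} through $A_f$ at all (that proposition only serves to transfer the result \emph{to} abelian varieties afterwards). It splits off the CM case by \Cref{ordinary-primes-of-cm-modular-forms}, and for non-CM $f$ it verifies the hypotheses of \Cref{galois-rep-theorem} and \Cref{inert-in-K-theorem} directly for the compatible system $\{\rho_{f,\ell}\}$ with $K=K_f$: the $\ell$-independent characteristic polynomials $X^2-a_p(f)X+\epsilon(p)p$, the Ramanujan bound, Serre's theorem that $\{p:a_p(f)=0\}$ has density $0$ (needed for the inert case via \Cref{inert-in-K-theorem}, a point your sketch omits), and Ribet's big image theorem \cite[Theorem~3.1]{Ri1985}, which supplies exactly the relaxed surjectivity condition (4) of \Cref{galois-rep-theorem} with the auxiliary fields $F_f'$ and $K_f'$. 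The key point you were missing is that condition (4) was deliberately formulated with surjectivity only over $(F_f',K_f')$ while the splitting hypothesis remains on $K=K_f$ itself; this is precisely what makes the inner-twist case go through without ever having to translate splitting data from $K_f$ into $K'$ or to control degree-one places of $F'$. If you replace your final paragraph by this verification and derive the first assertion from the second via \Cref{field-of-degree-q-or-2q}, your argument becomes the paper's proof.
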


As suggested in \Cref{result-comparison} (4), we will first prove the following two results that are formulated independently of abelian varieties or modular forms. Here \Cref{galois-rep-theorem} concerns compatible systems of Galois representations and \Cref{inert-in-K-theorem} is merely on sequences of algebraic integers indexed by primes (resembling traces of Frobenius). We will then apply these results to deduce the main theorems for abelian varieties and modular forms. Specifically, \Cref{galois-rep-theorem} deals with the ``$2q$'' case of \Cref{ablian-var-theorem-2} and \Cref{modular-form-theorem}, and the (P2) case of \Cref{ablian-var-theorem-3}; \Cref{inert-in-K-theorem} deals with the ``$q$'' case of \Cref{ablian-var-theorem-2}, \Cref{abelian-var-quadratic-theorem}, and \Cref{modular-form-theorem}, and the (P1) case of \Cref{ablian-var-theorem-3}.

\begin{theorem}
\label{galois-rep-theorem}
Let $F$ and $K$ be two number fields, let $S$ be a finite set of primes of $F$, and let $(a_v)_{v\in\Sigma_F\setminus S}$ be a fixed sequence in $\cl{O}_K$. Let $\epsilon$ be a finite order Hecke character of $F$ with image lying in $K^\times$. Let $T$ be an infinite set of rational primes which split completely in $K$. For each rational prime $\ell\in T$, let $\rho_\ell:\Gal(\ov{F}/F)\rightarrow\GL_2(K\otimes\Q_\ell)$ be a continuous Galois representation. Suppose that 
\begin{enumerate}[label=\rm{(\arabic*)}]
    \item (ramification) for every $\ell\in T$, each $\rho_\ell$ is unramified at every $v\in\Sigma_F$ with $v\notin S$ and $v\nmid\ell$;
    \item ($\ell$-independence) for every $\ell\in T$ and every $v\in\Sigma_F$ with $v\notin S$ and $v\nmid\ell$, the characteristic polynomial of $\rho_\ell(\Frob_v)\in \GL_2(K\otimes \Q_\ell)$ is
    \begin{align*}
        X^2-(a_v\otimes 1)X+(\epsilon(v)\N(v)\otimes1),
    \end{align*}
    which is independent of $\ell$, where $\N(v)=|\cl{O}_F/v|$; 
    \item (the Ramanujan conjecture/Hasse--Weil bound) the sequence $(a_v)_{v\in\Sigma_F\setminus S}$ satisfies that
    \begin{align*}
        |\iota(a_v)|_\C\leq 2\N(v)^{1/2}
    \end{align*}
    for all $v\in\Sigma_F$ with $v\notin S$ and all embeddings $\iota:K\hookrightarrow\C$;
    \item (surjectivity) there exists two finite extensions of fields $F'/F$ and $K/K'$ such that for every $\ell\in T$, we have, up to conjugation, that
    \begin{align*}
        \rho_\ell(\Gal(\ov{F}/F'))=\{M\in\GL_2(\cl{O}_{K'}\otimes\Z_\ell) : \det M\in (\Z\otimes\Z_\ell)^\times (\simeq   \Z_\ell^{\times})\}. 
    \end{align*}
\end{enumerate}
Suppose further that there exists a rational prime $p$ that lies below a degree $1$ prime of $F$ and splits into two distinct primes in $K$ of the same inertia degree. Then, the set
\begin{align*}
    S^{\mathrm{ord}}:=\{v\in\Sigma_F:v\notin S\text{ and }\lambda\nmid (a_v)\text{ for all $\lambda\in\Sigma_K$ with }\lambda|\N(v)\}
\end{align*}
has positive density.
\end{theorem}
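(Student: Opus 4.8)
\emph{Plan and reductions.} The strategy is to intersect a positive-density set of primes singled out by Chebotarev (using $p$) with a single mod-$\ell_0$ congruence condition ($\ell_0\in T$ fixed), and to use the Hasse--Weil bound (hypothesis~(3)) to see that the resulting set lies in $S^{\mathrm{ord}}$. It suffices to produce a positive density of primes $v$ of $F$ that are of degree $1$ over $\Q$, lie outside $S$, and have residue characteristic $\ell_v$ unramified in $K$ and larger than a constant depending only on $K$ and $F'$; the excluded primes have density $0$. For such a $v$ one has, since $\N(v)=\ell_v$, that $v\in S^{\mathrm{ord}}$ if and only if $a_v$ is a unit at every prime of $K$ above $\ell_v$, equivalently $\ell_v\nmid \N_{K/\Q}(a_v)$ (this already forces $a_v\ne 0$). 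Write $d=[K:\Q]$; by hypothesis $p$ is unramified in $K$ with $p\cl{O}_K=\mu_1\mu_2$ and $f(\mu_1/p)=f(\mu_2/p)=d/2=:f$.

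\emph{The set $V_0$ and a rigidity for non-ordinary primes.} Because $p$ lies below a degree-$1$ prime of $F$, the element $\Frob_p$ fixes a place of $F$, and it has the prescribed cycle type on the embeddings of $K$; hence the Chebotarev density theorem, applied to a finite Galois extension $L/\Q$ controlling the splitting of rational primes in both $K$ and $F$, shows that the set $V_0$ of primes $v$ as above for which $\ell_v$ splits in $K$ exactly as $p$ does --- say $\ell_v\cl{O}_K=\mu_1^{(v)}\mu_2^{(v)}$ with both inertia degrees equal to $f$ --- has positive density. I claim that a \emph{non-ordinary} $v\in V_0$ satisfies $(a_v)=\mu_i^{(v)}\cdot\fr{c}$ for some $i\in\{1,2\}$ and some ideal $\fr{c}$ of $\cl{O}_K$ coprime to $\ell_v$ with $\N(\fr{c})\le 2^{d}$. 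Indeed, non-ordinarity forces $\mu_i^{(v)}\mid(a_v)$ for some $i$, so $\ell_v^{f}=\N(\mu_i^{(v)})\mid\N_{K/\Q}(a_v)$, while hypothesis~(3) gives $\bigl|\N_{K/\Q}(a_v)\bigr|=\prod_\iota|\iota(a_v)|\le(2\ell_v^{1/2})^{d}=2^{d}\ell_v^{f}$; comparing $\ell_v$-valuations and using $\ell_v>2^{d}$ yields $v_{\ell_v}\!\bigl(\N_{K/\Q}(a_v)\bigr)=f$ exactly, i.e.\ $v_{\mu_1^{(v)}}(a_v)+v_{\mu_2^{(v)}}(a_v)=1$, whence the claim.

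\emph{The auxiliary prime and conclusion.} Fix $\ell_0\in T$ with $\ell_0>2^{d}$, outside $S$, unramified in $K$, and large enough that $F(\bar\rho_{\ell_0})$ is linearly disjoint over a fixed finite extension of $F$ from $LF$ (possible since their ramification becomes disjoint). Since $\ell_0$ splits completely in $K$, reducing $\rho_{\ell_0}$ modulo $\ell_0$ gives a homomorphism $\bar\rho_{\ell_0}\colon\Gal(\ov F/F)\to\GL_2(\cl{O}_K\otimes\F_{\ell_0})=\prod_{\lambda\mid\ell_0}\GL_2(\F_{\ell_0})$ for which, by hypothesis~(2), the trace of the $\lambda$-th coordinate of $\bar\rho_{\ell_0}(\Frob_v)$ is $a_v\bmod\lambda$ (for $v$ unramified outside $S\cup\{\ell_0\}$). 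By hypothesis~(4), $\bar\rho_{\ell_0}(\Gal(\ov F/F))$ contains, with index bounded by $[F':F]$, the reduction of $\{M\in\GL_2(\cl{O}_{K'}\otimes\Z_{\ell_0}):\det M\in(\Z\otimes\Z_{\ell_0})^\times\}$, which contains a positive proportion of elements having \emph{some} coordinate of trace $0$; likewise, surjectivity forces $\{v:a_v=0\}$ to have density $0$. Now if $v\in V_0$ has $\ell_v>\ell_0$ and is non-ordinary, then by the claim $(a_v)=\mu_i^{(v)}\fr{c}$ is coprime to $\ell_0$ (every prime factor of $\fr{c}$ has norm $\le 2^{d}<\ell_0$, and $\mu_i^{(v)}$ lies over $\ell_v\ne\ell_0$), so $\bar\rho_{\ell_0}(\Frob_v)$ has \emph{nonzero} trace in every coordinate. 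Contrapositively,
\[
\bigl\{\,v\in V_0:\ \ell_v>\ell_0,\ a_v\ne0,\ \bar\rho_{\ell_0}(\Frob_v)\text{ has a coordinate of trace }0\,\bigr\}\ \subseteq\ S^{\mathrm{ord}}.
\]
By Chebotarev applied to the compositum $LF\cdot F(\bar\rho_{\ell_0})$ together with the linear disjointness above, the left-hand side has density bounded below by a positive constant times the density of $V_0$, less the density-$0$ locus $\{a_v=0\}$ and the finitely many $v$ with $\ell_v\le\ell_0$; hence $S^{\mathrm{ord}}$ has positive density.

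\emph{The main obstacle.} The crux is that ordinarity at $v$ is a condition at the \emph{varying} residue characteristic $\ell_v$, and it is not obvious it can be detected, on a positive-density set, by a single fixed auxiliary prime $\ell_0$. What makes this work is the Hasse--Weil bound forcing $(a_v)$, for the non-ordinary $v$ under consideration, to be a prime ideal times an ideal of bounded norm --- a rigidity that only becomes available because $V_0$ was chosen (using the prime $p$) so that $\ell_v$ has the special ``two equal inertia degrees'' splitting in $K$. The remaining ingredients --- the Chebotarev and linear-disjointness bookkeeping, and checking that the relevant proportions in $\GL_2(\F_{\ell_0})$ and the density of $\{a_v=0\}$ behave as asserted --- are routine.
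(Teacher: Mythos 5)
Your route is genuinely different from the paper's, and its core rigidity step is correct. The paper converts non-ordinarity (for degree-one $v$ whose residue characteristic $\ell_v$ has splitting type $(g/2,g/2)$ in $K$) into the $\ell$-independent integer identity $\N_{K/\Q}(a_v)^2=c\N(v)^g$ with $0\le c\le 2^{2g}$ (\Cref{p2-lemma}), and then shows each exceptional set $S(c)$ has density $0$ by a residual Chebotarev count whose ratio is $O(1/\ell)$ for every auxiliary $\ell\in T$, letting $\ell\to\infty$ (\Cref{p2-case-asymptotics}, \Cref{density-of-S(c)}). You instead extract from the same Hasse--Weil estimate the sharper statement $v_{\mu_1^{(v)}}(a_v)+v_{\mu_2^{(v)}}(a_v)=1$, so that for non-ordinary $v\in V_0$ with $a_v\neq 0$ the ideal $(a_v)$ is one prime above $\ell_v$ times an ideal of norm at most $2^{[K:\Q]}$, hence coprime to a single fixed $\ell_0\in T$; therefore any $v\in V_0$ at which some coordinate of $\ov{\rho}_{\ell_0}(\Frob_v)$ has trace zero must be ordinary. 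This one-auxiliary-prime detection is valid and replaces the paper's limit over $\ell$; the price is a smaller density bound (a fraction roughly $1/\ell_0$ of $V_0$, rather than almost all of $S_2$) and a more delicate joint equidistribution step.

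That step is where your write-up has a genuine gap. You need a positive density of $v$ satisfying simultaneously the $V_0$-condition (read off from $\Frob_v$ restricted to the fixed field $LF$) and the trace-zero condition on $\ov{\rho}_{\ell_0}(\Frob_v)$, and you justify this by asserting $F(\ov{\rho}_{\ell_0})$ can be made linearly disjoint from $LF$ ``since their ramification becomes disjoint.'' Disjoint ramification does not imply linear disjointness: the intersection can be a nontrivial subextension unramified over $F$, and, more concretely, $\det\ov{\rho}_{\ell_0}=\ov{\epsilon}\,\ov{\chi}_{\ell_0}$ involves the fixed character $\epsilon$, whose fixed field may meet $LF$ nontrivially for \emph{every} $\ell_0$, so no choice of large $\ell_0$ makes the asserted disjointness automatic, and choosing it ``over a fixed finite extension of $F$'' is not by itself enough to run the Chebotarev argument on the prescribed $V_0$-classes. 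The repair is to bypass disjointness using hypothesis (4) over the compositum: $\Gal(\ov{F}/F'LF)$ has index at most $[F'LF:F']$ in $\Gal(\ov{F}/F')$, a bound independent of $\ell_0$, so $\ov{\rho}_{\ell_0}(\Gal(\ov{F}/F'LF))$ has bounded index in the reduced image prescribed by (4) and hence contains its full $\SL_2$-part once $\ell_0$ is large; by \Cref{countinglemma}-type counting, every coset of that subgroup contains a proportion $\gg 1/\ell_0$ of elements with a trace-zero coordinate, and Chebotarev in $LF\cdot F(\ov{\rho}_{\ell_0})$ over $F$ then yields the desired positive density regardless of entanglement. Relatedly, ``surjectivity forces $\{v:a_v=0\}$ to have density $0$'' is true but not immediate from the shape of the image at the single prime $\ell_0$: it needs its own residual count with $\ell\to\infty$ (this is exactly the $c=0$ case of \Cref{density-of-S(c)}). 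With these two repairs your argument goes through.
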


\begin{theorem}
\label{inert-in-K-theorem}
Let $F$ and $K$ be two number fields and let $S$ be a finite set of primes of $F$. Let $(a_v)_{v\in\Sigma_F\setminus S}$ be a fixed sequence in $\cl{O}_K$ satisfying that 
\begin{enumerate}[label=\rm{(\arabic*)}]
    \item (the Ramanujan conjecture) $|\iota(a_v)|_\C\leq 2\N(v)^{1/2}$ for all $v\in\Sigma_F$ with $v\notin S$ and all embeddings $\iota:K\hookrightarrow\C$;
    \item (non-lacunary) the set $\{v\in\Sigma_F:v\notin S\text{ and }a_v=0\}$ of primes of $F$ has density $0$.
\end{enumerate}
Suppose further that there exists a rational prime $p$ that lies below a degree $1$ prime of $F$ and remains inert in $K$. Then, the set
\begin{align*}
    S^{\mathrm{ord}}:=\{v\in\Sigma_F:v\notin S\text{ and }\lambda\nmid (a_v)\text{ for all $\lambda\in\Sigma_K$ with }\lambda|\N(v)\}
\end{align*}
 has positive density.
\end{theorem}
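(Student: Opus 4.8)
The plan is to reduce the theorem to an elementary estimate on $\N_{K/\Q}(a_v)$, after first using the Chebotarev density theorem to convert the hypothesis (``there is one inert prime'') into a positive-density statement. The starting observation is that a prime $v\in\Sigma_F\setminus S$ fails to lie in $S^{\mathrm{ord}}$ precisely when some prime $\lambda$ of $K$ above the rational prime $p$ below $v$ divides $a_v$; equivalently, $p\mid\N_{K/\Q}(a_v)$, with the convention $p\mid 0$. When $p$ is \emph{inert} in $K$ the only prime of $K$ above $p$ is the principal prime $p\mathcal{O}_K$, so in that case $v\notin S^{\mathrm{ord}}$ if and only if $a_v\in p\mathcal{O}_K$.

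First I would produce the inert primes in bulk. Let $L/\Q$ be a finite Galois extension containing $K$ and $F$, set $G=\Gal(L/\Q)$, $H=\Gal(L/F)$, $N=\Gal(L/K)$, and for a rational prime $\ell$ unramified in $L$ let $\sigma_\ell\in G$ denote a Frobenius element. Then ``$\ell$ is inert in $K$'' is equivalent to ``$\sigma_\ell$ acts on the $G$-set $G/N$ as a single $[K:\Q]$-cycle'', and ``$\ell$ lies below a degree-$1$ prime of $F$'' is equivalent to ``$\sigma_\ell$ has a fixed point on the $G$-set $G/H$''; both are conjugation-invariant properties of $\sigma_\ell$. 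Hence the set $\mathcal{C}\subseteq G$ of elements satisfying both is a union of conjugacy classes, and it is nonempty because the prime $p$ furnished by the hypothesis (which we may take unramified in $L$, as is the case in the applications, where such a prime is exhibited explicitly) has $\sigma_p\in\mathcal{C}$. Chebotarev then gives a positive-density set of rational primes $\ell$ with $\sigma_\ell\in\mathcal{C}$; attaching to each such $\ell$ one degree-$1$ prime of $F$ above it produces a set $\Sigma_0\subseteq\Sigma_F\setminus S$ of degree-$1$ primes of $F$, of positive density among all primes of $F$ (degree-$1$ primes are already of density $1$ there), each lying over a rational prime inert in $K$.

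Next I would run the size estimate on $\Sigma_0$. Fix $v\in\Sigma_0$, let $p=\N(v)$ (a rational prime, as $v$ has degree $1$), and suppose $v\notin S^{\mathrm{ord}}$, so that $a_v=p\,b_v$ with $b_v\in\mathcal{O}_K$. If furthermore $a_v\neq 0$ then $b_v\neq 0$, and condition~(1) gives $|\iota(b_v)|_\C=|\iota(a_v)|_\C/p\leq 2p^{-1/2}$ for every embedding $\iota\colon K\hookrightarrow\C$; multiplying over the $[K:\Q]$ embeddings yields $1\leq|\N_{K/\Q}(b_v)|\leq 2^{[K:\Q]}\,p^{-[K:\Q]/2}$, hence $p\leq 4$. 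Therefore
\[
\{v\in\Sigma_0:v\notin S^{\mathrm{ord}}\}\subseteq\{v\in\Sigma_F\setminus S:a_v=0\}\cup\{v\in\Sigma_F:\N(v)\leq 4\},
\]
which is a union of a density-$0$ set (condition~(2)) and a finite set, hence has density $0$. Consequently $\Sigma_0\cap S^{\mathrm{ord}}$ has the same positive density as $\Sigma_0$, so $S^{\mathrm{ord}}$ has positive density.

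I do not anticipate a real obstacle here; this is the ``easy'' case alluded to in the introduction, and the size estimate is just the higher-dimensional version of the familiar fact that, for an elliptic curve over $\Q$, $p\mid a_p$ forces $a_p=0$ once $p\geq 5$. The only step requiring genuine care is the first one: since $K/\Q$ and $F/\Q$ need not be Galois, one must express ``inert in $K$'' and ``has a degree-$1$ prime in $F$'' purely in terms of the Frobenius conjugacy class acting on the relevant coset spaces, and check that the single prime provided by the hypothesis indeed certifies that the corresponding union of conjugacy classes is nonempty --- this is the ``analysis of the splitting behaviours of primes in extensions of number fields that are not necessarily Galois'' referred to in the proof outline of \Cref{galois-rep-theorem}.
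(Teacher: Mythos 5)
Your proposal is correct and takes essentially the same route as the paper: it upgrades the single hypothesis prime to a positive-density set of degree-$1$ primes of $F$ lying over rationals inert in $K$ via a cycle-type/Chebotarev argument in a Galois closure (the paper's \Cref{splitting-of-prime-symmetric} and \Cref{one-implies-positive-density}), then uses the archimedean norm bound to show that for such $v$ with $\N(v)\geq 5$ the failure of $v\in S^{\mathrm{ord}}$ forces $a_v=0$ (the paper's \Cref{p1-lemma}), and finishes with the non-lacunarity hypothesis and Landau's prime ideal theorem. The unramifiedness caveat you flag for the witness prime is likewise implicit in the paper, since its notion of splitting type presupposes the prime is unramified in $F$.
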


\begin{remark}
\label{galois-rep-theorem-remark}
We make two remarks on \Cref{galois-rep-theorem} and \Cref{inert-in-K-theorem}.
\begin{enumerate}[label=(\arabic*)]
    \item The surjectivity condition (4) in \Cref{galois-rep-theorem} is intentionally relaxed to avoid the need for assuming that the endomorphisms of the abelian varieties are defined over the base field or that the modular forms have no inner twists. 
    \item By assuming a condition similar to that in \Cref{ablian-var-theorem-3} on the existence of some rational prime with specific joint splitting behaviors, we can also apply \Cref{galois-rep-theorem} and \Cref{inert-in-K-theorem} to obtain similar positive density results on the ordinary primes of certain Hilbert newforms. The necessary ingredients for the proof in this case are given by \cite{Ca1986} and \cite{Ta1989} on the existence of the associated Galois representations, \cite[Appendix~B]{Ne2012} on the surjectivity of the Galois images, and \cite{Bl2006} on the Ramanujan conjecture.
\end{enumerate}
\end{remark}

We now end this section by sketching the main steps of the proof of \Cref{galois-rep-theorem} and its application to the (P2) case of \Cref{ablian-var-theorem-3}. The general approach to proving such results can be traced back to the work of Serre, Katz, and Ogus, and has been refined by Pink, Noot, and others, as previously mentioned. Indeed, our proof is based on similar ideas.
\begin{enumerate}[leftmargin=1.5cm, label=Step \arabic*.]
    \item By analyzing the splitting behaviors of primes in (non-Galois) extensions of number fields, construct a positive density set $S_0$ of primes $v$ of $F$ that are of degree $1$ and lie above rational primes $p$ that split into two distinct primes in $K$ of the same inertia degree.
    \item Associate to each prime $v\in S_0$ the value $\N_{K/\Q}(a_v)^2\cdot \N(v)^{-g}$ and use the splitting behavior of $p$ and the Hasse--Weil bound of $a_v$ to show that for $v\notin S^{\ord}$, $\N_{K/\Q}(a_v)^2\cdot \N(v)^{-g}$  belongs to a finite set of integers (with size at most $2^{2g}+1$).
    \item Use the surjectivity assumption of the residual Galois representations and the Chebotarev density theorem to show that $\{v:\N_{K/\Q}(a_v)^2\cdot \N(v)^{-g}=c\}$ for a fixed integer $c$ has density $0$, thus concluding that the density of the primes $v\in S_0$ with $v\notin S^{\ord}$ is 0.
    \item Prove a surjectivity result on the Galois representations attached to the relevant abelian varieties in order to apply \Cref{galois-rep-theorem} to the (P2) case of \Cref{ablian-var-theorem-3}.
\end{enumerate}

\begin{comment}
Though \Cref{ablian-var-theorem-2}, \Cref{ablian-var-theorem-3}, and \Cref{modular-form-theorem} are all applications of \Cref{galois-rep-theorem}, we will present a simpler proof of \Cref{ablian-var-theorem-2} by using the modularity of $\GL_2$-type abelian varieties over~$\Q$ and \Cref{modular-form-theorem}.
\end{comment}

\subsection*{Overview} In \Cref{sec:lem-NF}, we collect results concerning the splitting types of primes in number field extensions. In \Cref{section-galois-rep}, we review the essential background and results related to the Galois representations of abelian varieties and modular forms. Subsequently, in \Cref{sec:easier-thm} and \Cref{sec:harder-thm}, we prove \Cref{inert-in-K-theorem} and \Cref{galois-rep-theorem}, respectively. The proof of the former relies solely on results from \Cref{sec:lem-NF}, while the latter involves also a detailed analysis of the images of the Galois representations. Lastly, in \Cref{sec:application}, we present the proofs of \Cref{ablian-var-theorem-2},  \Cref{abelian-var-quadratic-theorem}, \Cref{ablian-var-theorem-3}, and \Cref{modular-form-theorem} as applications of \Cref{galois-rep-theorem} and \Cref{inert-in-K-theorem}. Finally, in the Appendix, we provide an independent proof of the density of ordinary primes (as defined in this paper) for CM modular forms of weight 2.

\subsection*{Notation}
Throughout this paper, $A$ usually denotes an abelian variety and $f$ usually denotes a modular form. 

In the case of abelian varieties, $g$ usually denotes the dimension of $A$, $F$ usually denotes the field of definition of $A$, and $K$ usually denotes the number field embedded into the $F$-endomorphism algebra of $A$. When enlarging the field of definition of $A$, we usually use $F'$ to denote the enlarged field of definition and $K'$ to denote the number field embedded into the center of the $F'$-endomorphism algebra of $A$ (and usually $K'\subseteq K$).

In the case of modular forms, $K_f$ usually denotes the coefficient field of $f$ and $g$ usually denotes the degree of $K_f$. We usually use the notations $F_f'/\Q$ and $K_f/K_f'$ for finite extensions of fields, which are needed when discussing the Galois images of the modular forms with inner twists. 

For any number field $E$ (e.g., $F,F',K,K'$), $\cl{O}_E$ denotes the ring of integers of $E$, $\Sigma_E$ denotes the set of nonzero prime ideals of $E$,  $\N_{E/\Q}(\cdot)$ denotes the absolute norm of an element of $E$, and $\N(\cdot)$ denotes the ideal norm.  Specifically for $F$ (resp.~$F'$) and $K$ or $K_f$ (resp.~$K'$), we use $v$ (resp.~$v'$) to denote a prime of $F$ (resp.~$F'$), use $p$ to denote the rational prime that lies below $v$ (resp.~$v'$), use $\lambda$ (resp.~$\lambda'$) to denote a prime of $K$ or $K_f$ (resp.~$K'$), and use $\ell$ to denote the rational prime that lies below~$\lambda$ (resp.~$\lambda'$).

\section{Lemmas on primes and number fields}\label{sec:lem-NF}

We first begin by proving some lemmas, mainly on the splitting behaviors of primes in (non-Galois) field extensions. 
In particular, \Cref{one-implies-positive-density} (resp.~\Cref{field-of-degree-q-or-2q}) is crucial to constructing a positive density set of primes that are ordinary in \Cref{galois-rep-theorem} and \Cref{inert-in-K-theorem} (resp.~\Cref{ablian-var-theorem-2}, \Cref{abelian-var-quadratic-theorem}, and \Cref{modular-form-theorem}).

\subsection{Correspondence between places and embeddings}

\begin{lemma}
\label{places-qlbarembeddings}
Let $K/\Q$ be a number field and let $\ell$ be a rational prime. For each embedding $\tau:K\hookrightarrow\ov{\Q_\ell}$, define a place $\lambda_\tau$ of $K$ by 
\begin{align*}
    |x|_{\lambda_\tau}:=|\tau(x)|_\ell
\end{align*}
for all $x\in K$, where $|\cdot|_\ell$ is the $\ell$-adic absolute value on $\ol{\Q_\ell}$ with $|\ell|_\ell=\ell^{-1}$. Then, the assignment $\tau\mapsto\lambda_\tau$ gives a one-to-one correspondence between embeddings $\tau:K\hookrightarrow\ov{\Q_\ell}$ up to 
composition  
with elements in $\Gal(\ov{\Q_\ell}/\Q_\ell)$ and places $\lambda|\ell$ of $K$.
\end{lemma}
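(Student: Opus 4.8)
The plan is to check first that $\tau\mapsto\lambda_\tau$ descends to a well-defined map from $\Gal(\ov{\Q_\ell}/\Q_\ell)$-orbits of embeddings to places of $K$ above $\ell$, and then to show this map is a bijection by constructing an inverse through completions. To begin, note that $|\cdot|_{\lambda_\tau}$ is an absolute value on $K$ extending $|\cdot|_\ell$ on $\Q$, simply because $|\cdot|_\ell$ is an absolute value on $\ov{\Q_\ell}$ and $\tau$ is a field embedding fixing $\Q$; hence $\lambda_\tau$ is a genuine place of $K$ with $\lambda_\tau\mid\ell$. Next I would observe that $\lambda_\tau$ depends only on the orbit of $\tau$: since $|\cdot|_\ell$ is the \emph{unique} absolute value on $\ov{\Q_\ell}$ extending the $\ell$-adic one on $\Q_\ell$, it is invariant under every $\sigma\in\Gal(\ov{\Q_\ell}/\Q_\ell)$, so $|\sigma(\tau(x))|_\ell=|\tau(x)|_\ell$ for all $x\in K$, giving $\lambda_{\sigma\circ\tau}=\lambda_\tau$.

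For surjectivity, given a place $\lambda\mid\ell$ of $K$, let $K_\lambda$ denote the completion, a finite extension of $\Q_\ell$; since $\ov{\Q_\ell}$ is algebraically closed there is a $\Q_\ell$-embedding $\iota\colon K_\lambda\hookrightarrow\ov{\Q_\ell}$, and restricting to $K$ yields $\tau:=\iota|_K$. Because the unique absolute value on $K_\lambda$ extending $|\cdot|_\ell$ is simultaneously $|\cdot|_\lambda$ and the pullback of $|\cdot|_\ell$ along $\iota$, one gets $|x|_{\lambda_\tau}=|\iota(x)|_\ell=|x|_\lambda$, so $\lambda_\tau=\lambda$. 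For injectivity, suppose $\lambda_{\tau_1}=\lambda_{\tau_2}=:\lambda$. Then each $\tau_i$ is an isometric embedding of $(K,|\cdot|_\lambda)$ into $\ov{\Q_\ell}$, and its image generates a subfield $L_i$ of $\ov{\Q_\ell}$ that is finite over $\Q_\ell$ (as $[K:\Q]<\infty$) and therefore complete; consequently $L_i$ contains the closure of $\tau_i(K)$, and $\tau_i$ extends to an isomorphism $\hat\tau_i\colon K_\lambda\simarrow L_i$. The composite $\hat\tau_2\circ\hat\tau_1^{-1}\colon L_1\simarrow L_2$ is then a $\Q_\ell$-isomorphism between finite subextensions of $\ov{\Q_\ell}/\Q_\ell$, which by the isomorphism extension theorem extends to some $\sigma\in\Gal(\ov{\Q_\ell}/\Q_\ell)$; by construction $\sigma\circ\tau_1=\tau_2$, so $\tau_1$ and $\tau_2$ lie in one orbit.

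I do not expect a serious obstacle here: the statement is essentially bookkeeping, and the only two points requiring (routine) care are that an embedding of $K$ extends to the completion $K_\lambda$ — which rests on finite extensions of $\Q_\ell$ being complete, so that $\tau_i(K)$ already lies inside a complete field — and that $\Gal(\ov{\Q_\ell}/\Q_\ell)$ acts by isometries, which is again uniqueness of the extension of $|\cdot|_\ell$. As an alternative, one could package the whole argument using the decomposition $K\otimes_\Q\Q_\ell\cong\prod_{\lambda\mid\ell}K_\lambda$: this identifies $\mathrm{Hom}_\Q(K,\ov{\Q_\ell})$ with $\coprod_{\lambda\mid\ell}\mathrm{Hom}_{\Q_\ell}(K_\lambda,\ov{\Q_\ell})$, on each factor of which $\Gal(\ov{\Q_\ell}/\Q_\ell)$ acts transitively by conjugacy of embeddings of a finite extension, and one checks directly that this orbit decomposition is precisely the partition of embeddings according to $\tau\mapsto\lambda_\tau$.
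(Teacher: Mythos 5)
Your proof is correct: well-definedness on $\Gal(\ov{\Q_\ell}/\Q_\ell)$-orbits via uniqueness (hence Galois-invariance) of the extended absolute value, surjectivity by embedding the completion $K_\lambda$ into $\ov{\Q_\ell}$, and injectivity by extending each $\tau_i$ isometrically to $K_\lambda\simarrow L_i$ and invoking the isomorphism extension theorem all check out. The paper does not argue this at all — it simply cites the Extension Theorem (8.1) of Neukirch, Chapter II — and what you have written is exactly the standard proof of that cited result, so there is no substantive difference in approach.
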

\begin{proof}
See (8.1) Extension Theorem in \cite[Chapter~II]{Ne1999}.
\end{proof}

\begin{lemma}
\label{places-qbarembeddings}
Let $K/\Q$ be a number field and let $\ell$ be a rational prime that splits completely in $K$. Fix an embedding $\iota_\ell:\ov{\Q}\hookrightarrow\ov{\Q_\ell}$. For each embedding $\sigma:K\hookrightarrow\ov{\Q}$, define a place $\lambda_\sigma$ of $K$ by 
\begin{align*}
    |x|_{\lambda_\sigma} :=|\iota_\ell(\sigma(x))|_\ell 
\end{align*}
for all $x\in K$. Then, the assignment $\sigma\mapsto\lambda_\sigma$ defines a one-to-one correspondence between embeddings $\sigma:K\hookrightarrow\ov{\Q}$ and places $\lambda|\ell$ of $K$. 
\end{lemma}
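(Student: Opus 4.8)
The plan is to factor the assignment $\sigma\mapsto\lambda_\sigma$ through the correspondence already established in \Cref{places-qlbarembeddings}. Given an embedding $\sigma:K\hookrightarrow\ov{\Q}$, composing with the fixed $\iota_\ell$ yields an embedding $\iota_\ell\circ\sigma:K\hookrightarrow\ov{\Q_\ell}$, and unwinding the definitions one has $|x|_{\lambda_\sigma}=|\iota_\ell(\sigma(x))|_\ell=|(\iota_\ell\circ\sigma)(x)|_\ell$, so $\lambda_\sigma$ is precisely the place attached to $\iota_\ell\circ\sigma$ by \Cref{places-qlbarembeddings}. It therefore suffices to prove two things: (i) $\sigma\mapsto\iota_\ell\circ\sigma$ is a bijection from the set of embeddings $K\hookrightarrow\ov{\Q}$ onto the set of embeddings $K\hookrightarrow\ov{\Q_\ell}$; and (ii) under the hypothesis that $\ell$ splits completely in $K$, the map $\tau\mapsto\lambda_\tau$ of \Cref{places-qlbarembeddings} is already injective, i.e.\ it is a genuine bijection before one passes to $\Gal(\ov{\Q_\ell}/\Q_\ell)$-orbits.

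For (i), note that since $K/\Q$ is separable of degree $n:=[K:\Q]$, both the set of embeddings $K\hookrightarrow\ov{\Q}$ and the set of embeddings $K\hookrightarrow\ov{\Q_\ell}$ have exactly $n$ elements, and $\sigma\mapsto\iota_\ell\circ\sigma$ is injective because $\iota_\ell$ is; hence it is a bijection. (Equivalently, surjectivity is direct: the image of any embedding $K\hookrightarrow\ov{\Q_\ell}$ consists of numbers algebraic over $\Q$, so lies in the algebraic closure of $\Q$ inside $\ov{\Q_\ell}$, which is exactly $\iota_\ell(\ov{\Q})$, and so the embedding factors through $\iota_\ell$.) For (ii), by \Cref{places-qlbarembeddings} the number of $\Gal(\ov{\Q_\ell}/\Q_\ell)$-orbits of embeddings $K\hookrightarrow\ov{\Q_\ell}$ equals the number of places $\lambda\mid\ell$ of $K$; since $\ell$ splits completely this number is $n$, while the total number of such embeddings is also $n$, so every orbit is a singleton. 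Thus $\tau\mapsto\lambda_\tau$ is injective, and being already known to be surjective onto $\{\lambda\mid\ell\}$, it is a bijection. Composing with (i) gives that $\sigma\mapsto\lambda_{\iota_\ell\circ\sigma}=\lambda_\sigma$ is a bijection onto the set of places $\lambda\mid\ell$ of $K$, which is the claim.

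There is no serious obstacle: the lemma is essentially bookkeeping on top of \Cref{places-qlbarembeddings}. The only place where the full hypothesis enters is the cardinality count in step (ii), where complete splitting of $\ell$ is exactly what forces the Galois orbits of $\ell$-adic embeddings to be trivial; everything else is the routine fact that a fixed embedding $\ov{\Q}\hookrightarrow\ov{\Q_\ell}$ identifies the $\Q$-embeddings of a number field into $\ov{\Q}$ with those into $\ov{\Q_\ell}$.
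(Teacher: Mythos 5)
Your proof is correct, and it follows the same overall route as the paper: both arguments reduce the lemma to \Cref{places-qlbarembeddings} via the assignment $\sigma\mapsto\iota_\ell\circ\sigma$ and finish with a cardinality count. The one genuine difference is how the complete-splitting hypothesis is used to kill the $\Gal(\ov{\Q_\ell}/\Q_\ell)$-action. The paper argues structurally: for $\tau=\iota_\ell\circ\sigma$, complete splitting forces the completion $\widehat{\tau(K)}$ to equal $\Q_\ell$, so $\tau(K)\subseteq\Q_\ell$ and $\alpha\circ\tau=\tau$ for every $\alpha\in\Gal(\ov{\Q_\ell}/\Q_\ell)$; injectivity into the orbit set plus the count of places then gives the bijection. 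You instead make the Galois orbits trivial by pure counting ($n$ places above $\ell$ versus $n$ embeddings $K\hookrightarrow\ov{\Q_\ell}$, so every orbit is a singleton), and separately record the elementary bijection between $\Q$-embeddings into $\ov{\Q}$ and into $\ov{\Q_\ell}$. Both steps are sound, and your version is arguably a bit more economical; the paper's completion argument has the side benefit of exhibiting explicitly that $\iota_\ell\sigma(K)\subseteq\Q_\ell$, i.e.\ the identification $K_{\lambda_\sigma}=\widehat{\iota_\ell\sigma(K)}=\Q_\ell$, which is exactly the normalization reused later in \Cref{density-of-S(c)-section}; in your setup that fact still follows (a singleton orbit means $\tau$ is fixed by the full Galois group, whose fixed field is $\Q_\ell$), but it is left implicit.
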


\begin{proof}
By \Cref{places-qlbarembeddings}, it suffices to show that the assignment $\sigma\mapsto\iota_\ell\circ\sigma$ gives a one-to-one correspondence between embeddings $\sigma:K\hookrightarrow\ov{\Q}$ and embeddings $\tau:K\hookrightarrow\ov{\Q_\ell}$ up to
composition with elements in $\Gal(\ov{\Q_\ell}/\Q_\ell)$. Let $\sigma:K\hookrightarrow\ov{\Q}$ be an embedding and let $\tau=\iota_\ell\circ\sigma$. Since $\ell$ splits completely in $K$, the completion $\widehat{\tau(K)}$ of $\tau(K)$ in $\ov{\Q_\ell}$ is thus the same as $\Q_\ell$. 
In this way, $\tau$ factors as
\begin{align*}
    \tau:K\simarrow\tau(K)\subseteq\widehat{\tau(K)}=\Q_\ell\subseteq\ov{\Q_\ell},
\end{align*}
so $\alpha\circ\tau=\tau$ for any $\alpha\in\Gal(\ov{\Q_\ell}/\Q_\ell)$.

Now, suppose that $\sigma_1,\sigma_2:K\hookrightarrow\ov{\Q}$ satisfy that $\iota_\ell\circ\sigma_1=\alpha\circ\iota_\ell\circ\sigma_2$ for some $\alpha\in\Gal(\ov{\Q_\ell}/\Q_\ell)$. Then, $\iota_\ell\circ\sigma_1=\iota_\ell\circ\sigma_2$ by the previous discussion, and hence $\sigma_1=\sigma_2$. This implies that the assignment $\sigma\mapsto\iota_\ell\circ\sigma$ is injective, and hence it is bijective  by \Cref{places-qlbarembeddings} since $\ell$ splits completely in $K$.
\end{proof}

\subsection{Splitting of primes}
We will now review and prove some results on the splitting of primes in non-Galois extensions. The results in this section guarantee that it suffices to assume the existence of one rational prime satisfying the suitable splitting behaviors assumed in \Cref{galois-rep-theorem} and \Cref{inert-in-K-theorem}, respectively. 

Before proving the results, let us first define some necessary notations. For a set $X$, let $\mathrm{Sym}(X)$ denote the symmetric group on $X$. For any $n\in\Z^+$, let $S_n$ denote the symmetric group of degree $n$, i.e., $S_n:=\mathrm{Sym}(\{1,2,\ldots,n\})$. We will also adopt the usual cycle notation, i.e., $(a_1,a_2,\ldots,a_r)$, to represent elements in $S_n$.

\begin{lemma}
\label{embed-into-symmetric}
Let $K/E$ be a finite extension of number fields and let $\wt{K}/E$ be the Galois closure of $K/E$. Let $G=\Gal(\wt{K}/E)$, $H=\Gal(\wt{K}/K)$, and $\Sigma=\{\sigma:K\rightarrow \wt{K}\;\big|\;\sigma|_E=\id_E\}$. Let $G$ (and hence $H$) act on $\Sigma$ via 
composition. Write $n=[K:E]$. Then, 
\begin{enumerate}[label=\rm{(\arabic*)}]
    \item $|\Sigma|=n$ and $G$ acts transitively on $\Sigma$.
    \item The action of $G$ on $\Sigma$ induces an embedding $G\hookrightarrow\mathrm{Sym}(\Sigma)$. 
    \item Let $\sigma_n\in\Sigma$ be the unique element such that $\sigma_n=\id_K$. 
    Then, $H=\Stab_G(\{\sigma_n\})$ and the action of $H$ on $\Sigma\backslash\{\sigma_n\}$ induces an embedding $H\hookrightarrow\mathrm{Sym}(\Sigma\backslash\{\sigma_n\})$.
    \item $H\backslash G\simeq\Sigma$ as sets with $G$-action.
\end{enumerate}
\end{lemma}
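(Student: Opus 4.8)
This lemma is essentially the standard translation of "finite extension of fields" into "transitive permutation group," so the proof is a sequence of routine verifications. The only inputs are basic Galois theory: that $\widetilde K/E$ is Galois, that $\Sigma$ has exactly $n=[K:E]$ elements (the $E$-embeddings of $K$ into any field containing a normal closure), and the Galois correspondence applied to the subgroup $H=\Gal(\widetilde K/K)$.

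\emph{Step (1).} Since $\widetilde K/E$ is normal, every $E$-embedding $K\to\widetilde K$ lands in $\widetilde K$, so $|\Sigma|=[K:E]=n$. For transitivity of the $G$-action by pre-composition: given $\sigma,\sigma'\in\Sigma$, the map $\sigma(K)\to\sigma'(K)$ given by $\sigma(x)\mapsto\sigma'(x)$ is an $E$-isomorphism of subfields of the normal extension $\widetilde K/E$, hence extends to an automorphism $g\in G$; then $g\circ\sigma=\sigma'$, wait — one must be careful about the side on which $G$ acts. The action is pre-composition, i.e.\ $g\cdot\sigma:=\sigma\circ g$ — no: reading the statement, $G=\Gal(\widetilde K/E)$ acts on $\Sigma=\{\sigma\colon K\to\widetilde K\}$ by pre-composition would require $G$ to act on the source $K$, which it does not. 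So the action must be post-composition, $g\cdot\sigma:=g\circ\sigma$; I will interpret "pre-composition" as the paper does (it is post-composition on $\widetilde K$, the codomain). With this reading, transitivity is exactly the extension-of-embeddings theorem for normal extensions, and I will cite the relevant standard fact.

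\emph{Steps (2) and (3).} For (2): if $g\in G$ fixes every $\sigma\in\Sigma$, then in particular $g\circ\sigma_n=\sigma_n$ where $\sigma_n=\mathrm{id}_K$ viewed in $\Sigma$, so $g$ fixes $K$ pointwise; taking the compositum of all $\sigma(K)$ — which is $\widetilde K$ since $\widetilde K$ is the Galois closure of $K/E$ — shows $g=\mathrm{id}$, hence $G\hookrightarrow\mathrm{Sym}(\Sigma)$. For (3): $g\cdot\sigma_n=\sigma_n$ iff $g\circ\mathrm{id}_K=\mathrm{id}_K$ iff $g|_K=\mathrm{id}_K$ iff $g\in\Gal(\widetilde K/K)=H$, so $H=\Stab_G(\{\sigma_n\})$; $H$ then acts on $\Sigma$, preserves $\{\sigma_n\}$, hence acts on $\Sigma\setminus\{\sigma_n\}$, and the same pointwise-fixing argument (now using that $\widetilde K$ is generated over $K$ by the images $\sigma(K)$ for $\sigma\neq\sigma_n$, since it is already generated over $E$ by all of them and $K$ is among them) gives the embedding $H\hookrightarrow\mathrm{Sym}(\Sigma\setminus\{\sigma_n\})$.

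\emph{Step (4).} The map $G\to\Sigma$, $g\mapsto g\cdot\sigma_n=g|_K$, is surjective by (1) and has fibers exactly the right cosets $Hg$ by (3) (since $g_1|_K=g_2|_K$ iff $g_1g_2^{-1}\in H$), hence descends to a $G$-equivariant bijection $H\backslash G\xrightarrow{\sim}\Sigma$. I should double-check the left/right coset convention against the paper's action normalization and state it consistently.

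\emph{Main obstacle.} There is no real obstacle — the only thing requiring care is bookkeeping of the side conventions (left vs.\ right cosets, pre- vs.\ post-composition) so that the four parts fit together coherently, and making sure the "generated by the images" claims are phrased so that they genuinely use the \emph{Galois closure} hypothesis rather than mere normality. I would state each of these generation facts explicitly before using them.
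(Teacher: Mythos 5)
Your proposal is correct, but it takes a slightly different route from the paper. The paper's proof runs everything through a primitive element: writing $K=E(\alpha)$, it identifies $\Sigma$ with the roots of the minimal polynomial $m_\alpha$, gets transitivity from the transitivity of $\Gal(\wt{K}/E)$ on the roots of an irreducible polynomial, and gets faithfulness from the fact that $\wt{K}$ is the splitting field of $m_\alpha$. You instead avoid the primitive element entirely: $|\Sigma|=n$ from separability plus normality of $\wt{K}/E$, transitivity from the isomorphism-extension theorem, and faithfulness (in both (2) and (3)) from the fact that the Galois closure is the compositum of the conjugates $\sigma(K)$, $\sigma\in\Sigma$. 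The two mechanisms are equivalent characterizations of the Galois closure, so neither buys much over the other; yours is marginally more intrinsic, the paper's is marginally more concrete. Your reading of ``pre-composition'' is also the right one: the only action that typechecks is $g\cdot\sigma=g\circ\sigma$, and this is the action the paper actually uses in its proof (it writes $g_1(\sigma(\alpha))=g_2(\sigma(\alpha))$).

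One bookkeeping point to actually fix, which is exactly the convention issue you flagged in Step (4): with the left action $g\cdot\sigma=g\circ\sigma$, the fibers of $g\mapsto g\circ\sigma_n=g|_K$ are the \emph{left} cosets $gH$, since $g_1|_K=g_2|_K$ is equivalent to $g_2^{-1}g_1\in H$, not to $g_1g_2^{-1}\in H$ as you wrote. So the orbit map directly gives $G/H\simeq\Sigma$ as left $G$-sets; to land on the paper's formulation $H\backslash G\simeq\Sigma$ with $G$ acting on $H\backslash G$ by right multiplication (as used in \Cref{splitting-of-prime}), either convert the action on $\Sigma$ to a right action via $\sigma\cdot g:=g^{-1}\circ\sigma$ and use $Hg\mapsto g^{-1}|_K$, or simply invoke, as the paper does, the abstract orbit–stabilizer statement in the appropriate one-sided form. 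This is a two-line repair and does not affect anything downstream, since \Cref{splitting-of-prime-symmetric} only uses the sizes of the $\langle g\rangle$-orbits on $\Sigma$, which are insensitive to the left/right convention.
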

\begin{remark}
If we identify $\Sigma$ with $\{1,\ldots,n\}$ such that $\sigma_n$ is identified with $n$, then it follows from the lemma that $G$ can be identified with a subgroup of $S_n$ 
and $H=\Stab_G(\{n\})$ can be identified with a subgroup of $S_n$ consisting of cycles not containing $n$.
\end{remark}
\begin{proof}
As $K/E$ is a finite extension of number fields, there exists $\alpha\in K$ such that $K=E(\alpha)$. Let $m_\alpha\in E[X]$ be the minimal polynomial of $\alpha$ over $E$. Then, $m_\alpha$ is of degree $n$ and has exactly $n$ distinct roots. There is then a one-to-one correspondence between elements $\sigma\in\Sigma$ and roots $\sigma(\alpha)$ of $m_\alpha$. Hence, $|\Sigma|=n$. In particular, the action of $G$ on $\Sigma$ corresponds to the action of $G$ on the set of roots of $m_\alpha$, which is transitive.

Now, we will show that $G$ acts faithfully on $\Sigma$ so that $G\hookrightarrow\mathrm{Sym}(\Sigma)$. Let $g_1,g_2\in G$ with $g_1\sigma=g_2\sigma$ for all $\sigma\in\Sigma$. Then, $g_1(\sigma(\alpha))=g_2(\sigma(\alpha))$ for all $\sigma\in\Sigma$. Hence, $g_1=g_2$ on all roots of~$m_\alpha$. Since $\wt{K}$ is the splitting field of $m_\alpha(X)$ over $E$, it follows that $g_1=g_2$.

For the third part, it suffices to show that $H=\Stab_G(\{\sigma_n\})$ so that it follows from the faithful action that $H\hookrightarrow\mathrm{Sym}(\Sigma\backslash\{\sigma_n\})$. Note that any element $\sigma\in\Sigma$ is uniquely determined by $\sigma(\alpha)$. Hence, $g\in\Stab_G(\{\sigma_n\})$ is equivalent to $g(\sigma_n(\alpha))=\sigma_n(\alpha)$, i.e., $g(\alpha)=\alpha$. As $K=E(\alpha)$, $g(\alpha)=\alpha$ is then equivalent to $g|_K=\id_K$, which is equivalent to $g\in H$. Hence, $H=\Stab_G(\{\sigma_n\})$.

The last part simply follows from the fact from group theory that if $G$ acts transitively on $\Sigma$, then $\Stab_G(\{\sigma\})\backslash G\simeq\Sigma$ as sets with $G$-action for all $\sigma\in\Sigma$.
\end{proof}

Let $K/E$ be an extension of number fields of degree $n$. Given a partition $\cl{P}=(a_1,\ldots,a_r)$ of~$n$, i.e., $n=\sum_{i=1}^ra_i$ with $a_i\in\Z^+$ for each $1\leq i\leq r$, we say that a prime $\fr{p}$ of $E$ is \emph{of splitting type $\cl{P}$ in $K$} if it is unramified in $K$ and $\fr{p}=\wp_1\cdots\wp_r$ in~$K$ where the inertia degree of each $\wp_i$ is $a_i$. On the other hand, given a partition $\cl{P}=(a_1,\ldots,a_r)$ of~$n$, we say that an element $g\in S_n$ is \emph{of cycle type $\cl{P}$} if it is a product of disjoint cycles of length $a_1,\ldots,a_r$.

Let us also recall two facts from the theory of field extensions. Let $K/E$ and $K^\prime/E$ be two finite extensions of number fields. Then, any prime of $E$ that is unramified in both $K$ and $K^\prime$ stays unramified in $KK^\prime$. In particular, any prime of $E$ that is unramified in $K$ stays unramified in the Galois closure. Also, if $K/E$ and $K^\prime/E$ are Galois extensions, then so is $KK^\prime/E$.

\begin{lemma}
\label{splitting-of-prime}
Let $L/K/E$ be a tower of finite extensions of number fields such that $L/E$ is a Galois extension. Let $G=\Gal(L/E)$ and $H=\Gal(L/K)$. Let $p$ be a prime of $E$ that is unramified in $L/E$ and let $\fr{p}$ be a prime of $L$ lying above $p$. Let $G_\fr{p}=\{\sigma\in G:\sigma(\fr{p})=\fr{p}\}$ and define an action of $G_\fr{p}$ on $H\backslash G$ by $H\sigma\cdot g=H\sigma g$. Then, 
\begin{enumerate}[label=\rm{(\arabic*)}]
    \item there is a one-to-one correspondence between orbits of $H\backslash G$ under the action of $G_\fr{p}$ and primes of $K$ that lie above $p$;
    \item the inertia degree of the prime of $K$ corresponding to an orbit of $H\backslash G$ is the size of the orbit.
\end{enumerate}
\end{lemma}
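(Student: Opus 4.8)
The plan is to apply the standard Galois-theoretic description of prime splitting twice --- to $L/E$ and to $L/K$ --- and then repackage the resulting double cosets as $G_\fr{p}$-orbits on $H\backslash G$. Nothing here is substantial; the only real danger is a left/right convention slip, so I would fix conventions at the outset. Recall that $G$ acts on the primes of $L$ by $\sigma\cdot\fr{q}=\sigma(\fr{q})$; since $L/E$ is Galois this action is transitive on the primes of $L$ above $p$ with stabilizer $G_\fr{p}$, so $\sigma G_\fr{p}\mapsto\sigma(\fr{p})$ is a bijection $G/G_\fr{p}\simarrow\{\text{primes of }L\text{ above }p\}$.

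Next I would use that $L/K$ is Galois with group $H$, hence $H$ acts transitively on the primes of $L$ lying above any fixed prime of $K$. Therefore the map sending a prime $\fr{P}$ of $L$ above $p$ to $\fr{P}\cap\cl{O}_K$ is surjective onto the primes of $K$ above $p$, and its fibers are exactly the orbits of the (left) $H$-action on the set of primes of $L$ above $p$. Transporting along the previous bijection, this $H$-action becomes $h\cdot(\sigma G_\fr{p})=h\sigma G_\fr{p}$, so the primes of $K$ above $p$ are in bijection with the double cosets $H\backslash G/G_\fr{p}$, with the prime $\fr{q}:=\sigma(\fr{p})\cap\cl{O}_K$ corresponding to $H\sigma G_\fr{p}$. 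Finally, the $G_\fr{p}$-orbit of $H\sigma$ under $H\sigma\cdot s=H\sigma s$ is, viewed as a subset of $G$, exactly $\bigcup_{s\in G_\fr{p}}H\sigma s=H\sigma G_\fr{p}$; hence ``$G_\fr{p}$-orbits on $H\backslash G$'' and ``double cosets $H\backslash G/G_\fr{p}$'' name the same set, and part (1) follows.

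For part (2), I would fix a representative $\sigma$, set $\fr{P}=\sigma(\fr{p})$ and $\fr{q}=\fr{P}\cap\cl{O}_K$, and track inertia degrees via the conjugation behavior of decomposition groups. Since $p$ is unramified in $L$, the decomposition group of $\fr{P}$ in $G$ is $\sigma G_\fr{p}\sigma^{-1}$ and has order equal to the inertia degree $f(\fr{P}/p)$, so $f(\fr{P}/p)=|G_\fr{p}|$; likewise $\fr{P}$ is unramified over $\fr{q}$ in $L/K$, its decomposition group in $H$ is $H\cap\sigma G_\fr{p}\sigma^{-1}$, of order $f(\fr{P}/\fr{q})$. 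Multiplicativity of inertia degrees in the tower $L/K/E$ then gives $f(\fr{q}/p)=f(\fr{P}/p)/f(\fr{P}/\fr{q})=|G_\fr{p}|/|H\cap\sigma G_\fr{p}\sigma^{-1}|$. On the other hand, the stabilizer of $H\sigma$ in $G_\fr{p}$ is $\{s\in G_\fr{p}:\sigma s\sigma^{-1}\in H\}=G_\fr{p}\cap\sigma^{-1}H\sigma$, which is conjugate by $\sigma$ to $H\cap\sigma G_\fr{p}\sigma^{-1}$; so orbit--stabilizer shows that the orbit of $H\sigma$ has size $[G_\fr{p}:G_\fr{p}\cap\sigma^{-1}H\sigma]=|G_\fr{p}|/|H\cap\sigma G_\fr{p}\sigma^{-1}|=f(\fr{q}/p)$, as desired. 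The hard part --- really the only thing to watch --- is keeping these several coset and conjugation identifications mutually consistent; beyond that, it is just the standard dictionary between decomposition groups and splitting of primes.
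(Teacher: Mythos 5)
Your proposal is correct. The paper itself gives no argument for this lemma---it simply cites \cite[Proposition~2.7, Chapter~III]{Ja1996}---and what you have written is exactly the standard proof behind that reference: identify primes of $L$ above $p$ with $G/G_{\fr{p}}$, identify primes of $K$ above $p$ with $H$-orbits, i.e.\ double cosets $H\backslash G/G_{\fr{p}}$, and compute the inertia degree $f(\fr{q}/p)=|G_{\fr{p}}|/|H\cap\sigma G_{\fr{p}}\sigma^{-1}|$ via unramifiedness, multiplicativity in the tower, and orbit--stabilizer. Your coset and conjugation bookkeeping (in particular the stabilizer $G_{\fr{p}}\cap\sigma^{-1}H\sigma$ and its conjugate $H\cap\sigma G_{\fr{p}}\sigma^{-1}$) is consistent, so the argument is a valid self-contained substitute for the citation.
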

\begin{proof}
See \cite[Proposition~2.7, Chapter~III]{Ja1996}.
\end{proof}

\begin{lemma}
\label{splitting-of-prime-symmetric}
Let $K/E$ be an extension of number fields of degree $n$ and let $\wt{K}/E$ be the Galois closure of $K/E$. Identify $\Sigma=\{\sigma\colon K\rightarrow \wt{K}\;:\;\sigma|_E=\id_E\}$ with $\{1,\ldots,n\}$, $G=\Gal(\wt{K}/E)$ with a subgroup of $S_n$, and $H=\Gal(\wt{K}/K)$ with $\Stab_G(\{n\})$. Let $\cl{P}=(a_1,\ldots,a_r)$ be a partition of~$n$. Let $\fr{p}$ be a prime of $E$ that is unramified in $K$ and let $g\in G$ be a Frobenius lift at~$\fr{p}$. Then, $g$ is of cycle type $\cl{P}$ if and only if $\fr{p}$ is of splitting type $\cl{P}$ in $K$.

In particular, $G$ contains an element of cycle type $\cl{P}$ if and only if there exists a prime of $E$ that is of splitting type $\cl{P}$ in $K$.
\end{lemma}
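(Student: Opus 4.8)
The plan is to deduce the lemma from \Cref{embed-into-symmetric} and \Cref{splitting-of-prime}, together with one elementary fact about permutations that I will use as a black box: for any $g\in S_n$ the orbits of $\langle g\rangle$ acting on $\{1,\ldots,n\}$ are exactly the supports of the cycles in the disjoint cycle decomposition of $g$ (fixed points counting as $1$-cycles), so the multiset of $\langle g\rangle$-orbit sizes on $\{1,\ldots,n\}$ is precisely the cycle type of $g$.

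For the main equivalence I would argue as follows. Since $\fr p$ is unramified in $K$ it is unramified in the Galois closure $\wt K$ (recalled above), so for each prime $\fr P$ of $\wt K$ over $\fr p$ the decomposition group $G_{\fr P}$ is cyclic with a canonical generator $\Frob_{\fr P}$, and any two such generators (for varying $\fr P$) are conjugate in $G$. Since cycle type is a conjugacy invariant in $S_n\supseteq G$, the property ``$g$ is of cycle type $\cl P$'' does not depend on the choice of Frobenius lift, so it suffices to take $g=\Frob_{\fr P}$ for one fixed $\fr P$, for which $\langle g\rangle=G_{\fr P}$. Applying \Cref{splitting-of-prime} to $\wt K/K/E$ identifies the primes of $K$ over $\fr p$ with the orbits of $\langle g\rangle$ on $H\backslash G$, the inertia degree of each prime being the size of its orbit; hence $\fr p$ is of splitting type $\cl P$ in $K$ precisely when the multiset of $\langle g\rangle$-orbit sizes on $H\backslash G$ equals $\cl P$. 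I would then use part~(4) of \Cref{embed-into-symmetric} to replace the $G$-set $H\backslash G$ by $\Sigma$, which under the fixed identification is $\{1,\ldots,n\}$ with $G$ acting through its embedding into $S_n$; transporting the orbit data along this isomorphism and invoking the permutation fact above, this same multiset is exactly the cycle type of $g$. Comparing the two descriptions gives that $g$ is of cycle type $\cl P$ if and only if $\fr p$ is of splitting type $\cl P$ in $K$.

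For the ``in particular'' clause, one direction is immediate: a prime of $E$ of splitting type $\cl P$ in $K$ is in particular unramified in $K$, and the main equivalence shows that any Frobenius lift at it is an element of $G$ of cycle type $\cl P$. For the converse I would invoke the Chebotarev density theorem for $\wt K/E$ to produce a prime $\fr p$ of $E$, unramified in $\wt K$, whose Frobenius conjugacy class is that of a prescribed $g_0\in G$ of cycle type $\cl P$; then a Frobenius lift at $\fr p$ is conjugate to $g_0$, hence of cycle type $\cl P$, so $\fr p$ is of splitting type $\cl P$ in $K$ by the equivalence.

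The one place that needs care is matching the $G$-actions used in the two cited lemmas: \Cref{embed-into-symmetric} equips $\Sigma$ (and hence $H\backslash G$ through the stated isomorphism) with the action of $G$ by composition, whereas \Cref{splitting-of-prime} uses right translation on $H\backslash G$, so one must confirm that ``orbits of $\langle g\rangle$ on $H\backslash G$'' is genuinely computed by the permutation $g\in S_n$ on $\{1,\ldots,n\}$. I expect this to be routine, since $\langle g\rangle=\langle g^{-1}\rangle$ makes the orbit partition of $H\backslash G$ insensitive to whether $\langle g\rangle$ acts by left or by right translation. Apart from this bookkeeping and the two standard inputs above (propagation of unramifiedness to the Galois closure, and surjectivity of Frobenius via Chebotarev), the substance of the lemma is carried entirely by \Cref{embed-into-symmetric} and \Cref{splitting-of-prime}, so I do not foresee a genuine obstacle.
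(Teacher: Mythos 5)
Your proof is correct and takes essentially the same route as the paper's: identify the decomposition group at $\fr{p}$ with $\langle g\rangle$, combine \Cref{splitting-of-prime} with the $G$-set identification $H\backslash G\simeq\Sigma\simeq\{1,\ldots,n\}$ from \Cref{embed-into-symmetric}, and match the multiset of $\langle g\rangle$-orbit sizes with the cycle type of $g$. Your additional care about the left/right coset actions and the Chebotarev step for the ``in particular'' clause are sound and merely make explicit what the paper leaves implicit.
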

\begin{proof}
As $g$ is a Frobenius lift at $\fr{p}$ and $\fr{p}$ is unramified in $K$ and hence in $\wt{K}$, we may identify the decomposition group $G_\fr{p}\subseteq G$ at $\fr{p}$ with~$\langle g\rangle$. Now, $H\backslash G\simeq\Sigma\simeq\{1,\ldots,n\}$ as sets with $G$-action by \Cref{embed-into-symmetric}. In order to obtain the splitting behavior of $\fr{p}$ in $K/E$, by \Cref{splitting-of-prime}, it suffices to look at the sizes of orbits of $\{1,\ldots,n\}$ under the action of $G_\fr{p}=\langle g\rangle$. It is now easy to see that the following are equivalent:
\begin{enumerate}[label=(\roman*)]
    \item $g$ is of cycle type $\cl{P}$;
    \item the sizes of orbits of $\{1,\ldots,n\}$ under the action of $G_\fr{p}=\langle g\rangle$ are exactly given by $\cl{P}$;
    \item $\fr{p}$ is of splitting type $\cl{P}$.
\end{enumerate}
\end{proof}

\begin{proposition}
\label{one-implies-positive-density}
Let $K_1/E,\ldots, K_m/E$ be finite extensions of number fields with degree $n_1,\ldots,n_m$ respectively. Let $\cl{P}_i$ be a partition of $n_i$. Then, the following are equivalent:
\begin{enumerate}[label=\rm{(\roman*)}]
    \item there exists a prime of $E$ that is of splitting type $\cl{P}_i$ in $K_i$ for all $i$;
    \item the set of primes of $E$ that are of splitting type $\cl{P}_i$ in $K_i$ for all $i$ has positive density.
\end{enumerate}
\end{proposition}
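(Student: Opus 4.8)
The implication (ii) $\Rightarrow$ (i) is trivial, so the substance is (i) $\Rightarrow$ (ii), and the plan is to fold all $m$ splitting conditions into a single application of the Chebotarev density theorem over one large Galois extension. First I would let $\wt{K_i}/E$ denote the Galois closure of $K_i/E$ and set $\wt{K}:=\wt{K_1}\cdots\wt{K_m}$, which is Galois over $E$ by the compositum fact recalled above; write $G=\Gal(\wt{K}/E)$ and $G_i=\Gal(\wt{K_i}/E)$, with the natural restriction surjections $\pi_i\colon G\twoheadrightarrow G_i$. Via \Cref{embed-into-symmetric} I identify each $G_i$ with a subgroup of $S_{n_i}$ acting on the set of $E$-embeddings $K_i\hookrightarrow\wt{K_i}$, and I let $C_i\subseteq G_i$ be the set of elements of cycle type $\cl{P}_i$. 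Since cycle type is a conjugacy invariant in $S_{n_i}$, the set $C_i$ is closed under $G_i$-conjugation, hence $\pi_i^{-1}(C_i)$ is closed under $G$-conjugation (as $\pi_i$ is a homomorphism), and therefore
\[
    C:=\bigcap_{i=1}^m\pi_i^{-1}(C_i)\subseteq G
\]
is a union of conjugacy classes of $G$.

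I would then translate the splitting conditions into membership in $C$. A prime $\fr{p}$ of $E$ of splitting type $\cl{P}_i$ in $K_i$ is in particular unramified in $K_i$, hence in $\wt{K_i}$; so any prime occurring in (i) or in the set in (ii) is unramified in $\wt{K}$ and has a well-defined Frobenius conjugacy class in $G$. For such a $\fr{p}$, a Frobenius lift $g\in G$ restricts to a Frobenius lift $\pi_i(g)\in G_i$ at $\fr{p}$ for $\wt{K_i}/E$, and by \Cref{splitting-of-prime-symmetric}, $\pi_i(g)$ has cycle type $\cl{P}_i$ if and only if $\fr{p}$ is of splitting type $\cl{P}_i$ in $K_i$. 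Hence, outside the finite set of primes of $E$ ramifying in $\wt{K}$, the set appearing in (ii) is exactly the set of primes of $E$ whose Frobenius class lies in $C$.

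Finally I would apply the Chebotarev density theorem: the density of primes of $E$ whose Frobenius class lies in $C$ equals $|C|/|G|$, which is positive as soon as $C\neq\emptyset$. And $C\neq\emptyset$ is precisely the content of (i): a prime as in (i) is unramified in $\wt{K}$, and by the previous paragraph its Frobenius lift $g$ satisfies $\pi_i(g)\in C_i$ for all $i$, so $g\in C$. This closes the loop and gives the equivalence. I do not expect a genuine obstacle here; the only points requiring care are the compatibility of Frobenius lifts with the restriction maps $\pi_i$ (standard ramification theory) and the observation that the excluded ramified primes are finite in number and hence do not affect densities.
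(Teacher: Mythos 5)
Your proposal is correct and follows essentially the same route as the paper: pass to the compositum of the Galois closures, use \Cref{splitting-of-prime-symmetric} to translate splitting types into cycle types of Frobenius elements under the restriction maps $\pi_i$, and conclude by Chebotarev. The only cosmetic difference is that you apply Chebotarev to the full conjugation-stable set $C=\bigcap_i\pi_i^{-1}(C_i)$ (which even yields the exact density $|C|/|G|$), whereas the paper applies it only to the single conjugacy class of the Frobenius element provided by (i).
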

\begin{proof}
Clearly, (ii) implies (i), so suppose that (i) holds. Let $\wt{K_i}/E$ be the Galois closure of $K_i/E$ and let $L=\wt{K_1}\cdots\wt{K_m}$ be the composite field. Note that $L/E$ is Galois since each $\wt{K_i}/E$ is Galois. Let $G_i=\Gal(\wt{K_i}/E)\hookrightarrow S_{n_i}$ and let $G=\Gal(L/E)$. Let $\pi_i:G\twoheadrightarrow G_i$ denote the projection map. Note that
\begin{align*}
    \Sigma_i:=\{\sigma\colon K_i\rightarrow L\;:\;\sigma|_E=\id_E\}=\{\sigma\colon K_i\rightarrow \wt{K_i}\;:\;\sigma|_E=\id_E\},
\end{align*}
and the map $G\rightarrow S_{n_i}$ given by the action of $G^\prime_i$ on $\Sigma_i$ factors as $G\twoheadrightarrow G_i\hookrightarrow S_{n_i}$.

Let $\fr{p}$ be a prime of $E$ that is of splitting type $\cl{P}_i$ in $K_i$ for each $i$. Since $\fr{p}$ is unramified in $K_i$ for all $i$, $\fr{p}$ is also unramified in $L$. Let $G_\fr{p}=\langle g\rangle\subseteq G$ be the decomposition group at $\fr{p}$, where $g$ is a Frobenius lift at $\fr{p}$, and let $C_g$ be the conjugacy class of $g$ in $G$. By the Chebotarev density theorem, there exists a positive density set of primes of $E$ that are unramified in $L/E$ and whose Frobenius elements lie in~$C_g$. Let $\fr{q}$ be one of them and we may identify $G_\fr{q}$ with $\langle g\rangle$ and the Frobenius lift at $\fr{q}$ with $g$. By \Cref{splitting-of-prime-symmetric}, since $\fr{p}$ is of splitting type $\cl{P}_i$ in $K_i$, each $\pi_i(g)\in G_i$ is of cycle type $\cl{P}_i$. It then follows from \Cref{splitting-of-prime-symmetric} again that $\fr{q}$ is of splitting type $\cl{P}_i$ in $K_i$, whence (ii) follows.
\end{proof}

\subsection{Extensions of degree $q$ or $2q$}

We will now focus on the splitting behaviors of primes in field extensions of degree $q$ or $2q$ for some prime $q$, which is one of the ingredients for the proof of \Cref{ablian-var-theorem-2} and \Cref{modular-form-theorem}. We will first prove a lemma on transitive subgroups of $S_{2q}$ and then prove the important proposition on field extensions of degree $q$ or $2q$.

\begin{lemma}
\label{transitive-2q}
Let $q$ be a prime and $G\hookrightarrow S_{2q}$ be a transitive subgroup, i.e., $G$ acts transitively on $\{1,2,\ldots,2q\}$. Then, $G$ contains an element that is a product of two disjoint cycles of length $q$.
\end{lemma}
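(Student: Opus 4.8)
The plan is to exploit the structure of transitive groups of degree $2q$ using Sylow theory at the prime $q$. Since $G \hookrightarrow S_{2q}$ acts transitively on a set of size $2q$, the order $|G|$ is divisible by $2q$, so a Sylow $q$-subgroup $Q$ of $G$ is nontrivial. The key observation is that $q \le |Q| \le q^{?}$, but more precisely, since $2q < q^2$ for $q \ge 3$ (and one handles $q=2$ separately, where $S_4$ and its transitive subgroups all contain a product of two transpositions), the Sylow $q$-subgroup of $S_{2q}$ itself has order exactly $q$; hence $|Q| = q$ and $Q = \langle \sigma \rangle$ for some element $\sigma$ of order $q$. An element of order $q$ in $S_{2q}$ is a product of disjoint $q$-cycles, so $\sigma$ is either a single $q$-cycle (moving $q$ points, fixing $q$) or a product of two disjoint $q$-cycles (moving all $2q$ points). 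I must rule out the first case.

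So the main step — and I expect the main obstacle — is to show $\sigma$ cannot be a single $q$-cycle. Suppose for contradiction that $\sigma$ moves exactly $q$ points; let $\Delta$ be its support, $|\Delta| = q$, and let $\Delta' = \{1,\dots,2q\} \setminus \Delta$ be the $q$ fixed points. Since $G$ is transitive, for any $x \in \Delta$ and $y \in \Delta'$ there is $g \in G$ with $g(x) = y$. I would count: by transitivity and the orbit-counting / Burnside-type argument, or more directly by considering the $q$-core, note that all Sylow $q$-subgroups are conjugate, each is generated by a single $q$-cycle, and the union of their supports is a $G$-invariant subset of $\{1,\dots,2q\}$; by transitivity this union is everything, so the supports cannot all coincide. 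Let $\sigma' = g\sigma g^{-1}$ be a conjugate $q$-cycle with support $g(\Delta) \ne \Delta$. Then $\langle \sigma, \sigma' \rangle$ is a $q$-subgroup? No — it need not be. Instead, I count the number of Sylow $q$-subgroups: $n_q \equiv 1 \pmod q$ and $n_q \mid [G:Q]$. The union $\bigcup$ of the supports of all $n_q$ conjugate $q$-cycles is $G$-invariant of size $\le 2q$, hence equals $\{1,\dots,2q\}$, so there are at least two distinct supports, forcing $n_q \ge 2$, hence $n_q \ge q+1$.

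To finish, I would use a double-counting or fixed-point argument: each Sylow $q$-subgroup $Q_i = \langle \sigma_i \rangle$ fixes exactly $q$ of the $2q$ points (the complement of its support). Consider the action of $G$ on $\{1,\dots,2q\}$ and count pairs $(x, Q_i)$ with $\sigma_i(x) = x$. For a fixed point $x$, the stabilizer $G_x$ has index $2q$ in $G$, so $|G_x| = |G|/(2q) = |Q|\cdot[G:Q]/(2q) = [G:Q]/2$; the Sylow $q$-subgroups of $G$ fixing $x$ are exactly the Sylow $q$-subgroups of $G_x$, of which there are $n_q(G_x) \equiv 1 \pmod q$ — in particular at least one if $q \mid |G_x|$. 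But $|G_x| = [G:Q]/2$ which has order coprime to $q$ if $q$ is odd and $|Q| = q$ is the full Sylow, a contradiction: no Sylow $q$-subgroup of $G$ can fix any point, so no $\sigma_i$ has a fixed point, i.e., each $\sigma_i$ is a product of two disjoint $q$-cycles. This contradicts the assumption that $\sigma$ is a single $q$-cycle, and completes the proof. (The case $q=2$, $S_4$: every transitive subgroup of $S_4$ has order divisible by $4$, hence contains an element of order $4$ — a $4$-cycle — or the Klein four group; in either case it contains $(1\,2)(3\,4)$-type elements, e.g. the square of a $4$-cycle, which is a product of two disjoint transpositions.)

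I expect the genuinely delicate point to be the fixed-point counting: making rigorous that "$|G_x| = [G:Q]/2$ is coprime to $q$" — this is where I use $|Q| = q$ exactly (so that $q \nmid [G:Q]$ would be false in general, but $q \nmid |G_x| = |G|/(2q)$ holds precisely because $v_q(|G|) = 1$). One must be careful that $v_q(|S_{2q}|) = 1$ for all primes $q$: indeed $v_q((2q)!) = \lfloor 2q/q \rfloor + \lfloor 2q/q^2 \rfloor + \cdots = 2 + 0 + \cdots$ for $q \ge 3$, wait — that gives $v_q = 2$. Let me recheck: $\lfloor 2q/q\rfloor = 2$, so $v_q((2q)!) = 2$ when $q^2 > 2q$, i.e. $q > 2$. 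So $|Q|$ could be $q^2$. In that case $Q$ is a Sylow $q$-subgroup of $S_q \times S_q \hookrightarrow S_{2q}$ (or of $S_{2q}$ directly), abelian of type $(q,q)$, generated by two disjoint $q$-cycles; then $Q$ visibly contains the product of two disjoint $q$-cycles. So the real case division is: if $v_q(|G|) = 2$, then $Q$ contains a regular element $(1\cdots q)(q{+}1\cdots 2q)$ directly (as $Q \le$ a conjugate of the standard Sylow); if $v_q(|G|) = 1$, run the fixed-point argument above. The main obstacle is thus organizing this case split cleanly and handling the $v_q = 2$ case by identifying $Q$ inside a standard Sylow of $S_{2q}$.
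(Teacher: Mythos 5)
Your proposal is correct (after your own mid-course correction that $v_q((2q)!)=2$ for $q\geq 3$) and takes essentially the same approach as the paper: a case split on $v_q(|G|)$, with the case $v_q(|G|)=2$ handled by conjugating the Sylow $q$-subgroup onto $\langle(1,\ldots,q),(q+1,\ldots,2q)\rangle$, the case $v_q(|G|)=1$ handled by the orbit--stabilizer contradiction at a fixed point of a putative single $q$-cycle (the paper phrases the same argument via $\Stab_G(\{2q\})$ after conjugating), and $q=2$ treated separately inside $S_4$. The only cosmetic difference is that for $q=2$ the paper classifies the relevant subgroups of $S_4$ explicitly, while you observe that any order-$4$ subgroup of $S_4$ already contains a product of two disjoint transpositions.
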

\begin{proof}
Let $G\cdot\{2q\}$ be the $G$-orbit of $2q$. Note that  $|G\cdot \{2q\}|=2q$. Since $G$ is transitive, $2q$ divides $|G|$ by the orbit-stabilizer theorem. Also, $|G|$ divides $|S_{2q}|=(2q)!$. Hence, $1\leq v_q(|G|)\leq 3$.

\textbf{Case 1:} $v_q(|G|)=3$. Then, $q=2$ and $G$ is a transitive subgroup of $S_4$ with $8$ dividing $|G|$. By the classification of subgroups of $S_4$, $G$ is either $S_4$ or conjugate to $\langle(1,2,3,4),(1,3)\rangle$ in $S_4$. In both cases, the element $(1,3)(2,4)$ (or its conjugate in $S_4$) lies in $G$, which satisfies the requirement.

\textbf{Case 2:} $v_q(|G|)=2$. By Sylow theorems, there exists a Sylow $q$-subgroup $I$ in $G$ of order~$q^2$. If $q=2$, then $I$ can only be conjugate to $\langle(1,2,3,4)\rangle$, $\langle(1,2),(3,4)\rangle$, or $\langle(1,2)(3,4), (1,3)(2,4)\rangle$ in $S_4$. In particular, $I$ always contains an element which is a product of two disjoint cycles of length~$q$. Now, suppose that $q\neq 2$. Note that $\langle(1, 2,\ldots, q),(q+1, \ldots, 2q)\rangle$ is a Sylow $q$-subgroup in $S_{2q}$ of order $q^2$, so $I$ is conjugate to $\langle(1, 2,\ldots, q),(q+1, \ldots, 2q)\rangle$ in $S_{2q}$ by Sylow theorems. Hence, $I$ is generated by two single cycles of length $q$ with disjoint elements. In particular, $I$ contains an element which is a product of two disjoint cycles of length $q$.

\textbf{Case 3:} $v_q(|G|)=1$. As $q$ is a prime, $G$ contains an element $\tau$ of order $q$.  Note that an element of order $q$ in $S_{2q}$ is either a single cycle of length $q$ or a product of two disjoint cycles of length~$q$. Suppose for the sake of contradiction that $\tau$ is a single cycle of length $q$. Then, there exists $a\in\{1,2,\cdots,2q\}$ such that $\tau(a)=a$. Since $G$ is transitive, there exists $\eta\in G$ such that $\eta(a)=2q$. Then, $(\eta\tau\eta^{-1})(2q)=2q$, so $\eta\tau\eta^{-1}\in \Stab_G(\{2q\})=:H$. By the orbit-stabilizer theorem, we have $|G\cdot\{2q\}|\cdot|H|=|G|$, so $q\nmid |H|$ as $v_q(|G|)=1$. This contradicts that $H$ contains the element $\eta\tau\eta^{-1}$ of order $q$.
\end{proof}

\begin{proposition}
\label{field-of-degree-q-or-2q}
Let $q$ be a prime and let $K/E$ be a finite extension of number fields. Then,
\begin{enumerate}[label=\rm{(\arabic*)}]
    \item if $[K:E]=q$, then there exists a prime of $E$ that is inert in $K$;
    \item if $[K:E]=2q$, then there exists a prime of $E$ that splits into two distinct primes in $K$ of the same inertia degree.
\end{enumerate}
\end{proposition}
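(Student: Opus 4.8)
The plan is to pass to the Galois closure and translate splitting behavior into a statement about cycle types, using the dictionary established in \Cref{splitting-of-prime-symmetric}. I would set $n=[K:E]$, let $\wt{K}/E$ be the Galois closure of $K/E$, and put $G=\Gal(\wt{K}/E)$, which by \Cref{embed-into-symmetric} embeds into $S_n$ and acts transitively on $\Sigma\cong\{1,\dots,n\}$. By the last sentence of \Cref{splitting-of-prime-symmetric}, to prove (1) it suffices to exhibit an element of $G$ of cycle type $(q)$ (a single $q$-cycle), and to prove (2) it suffices to exhibit an element of $G$ of cycle type $(q,q)$ (a product of two disjoint $q$-cycles).

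For part (1), $G$ is a transitive subgroup of $S_q$, so by the orbit--stabilizer theorem $q\mid|G|$, and by Cauchy's theorem $G$ contains an element $\tau$ of order $q$. In $S_q$ the order of $\tau$ is the least common multiple of its cycle lengths; since $q$ is prime and the cycle lengths sum to $q$, the only possibility is that $\tau$ is a single $q$-cycle. Hence $G$ contains an element of cycle type $(q)$, and \Cref{splitting-of-prime-symmetric} produces a prime of $E$ that is inert in $K$.

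For part (2), $G$ is a transitive subgroup of $S_{2q}$, so \Cref{transitive-2q} directly supplies an element of $G$ that is a product of two disjoint cycles of length $q$, i.e.\ of cycle type $(q,q)$. Applying \Cref{splitting-of-prime-symmetric} again, there is a prime of $E$ of splitting type $(q,q)$ in $K$; by definition such a prime is unramified and factors into exactly two distinct primes of $K$, each of inertia degree $q$, hence of the same inertia degree, which is precisely the claim.

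The substantive content is entirely in the group-theoretic inputs, and the only nontrivial one is \Cref{transitive-2q} for part (2) (specifically the case $v_q(|G|)=1$, where a single $q$-cycle is excluded by combining transitivity with the orbit--stabilizer count on $\Stab_G(\{2q\})$); part (1) is immediate from Cauchy's theorem once one observes that elements of order $q$ in $S_q$ are forced to be $q$-cycles. If a quantitative strengthening were wanted, one could invoke \Cref{one-implies-positive-density} in place of \Cref{splitting-of-prime-symmetric} to upgrade ``there exists a prime'' to ``a positive density set of primes'' in both statements.
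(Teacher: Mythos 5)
Your proposal is correct and follows essentially the same route as the paper's proof: pass to the Galois closure, use \Cref{splitting-of-prime-symmetric} to reduce to exhibiting elements of cycle type $(q)$ resp.\ $(q,q)$ in $G\subseteq S_n$, obtain the $q$-cycle from transitivity plus Cauchy's theorem, and invoke \Cref{transitive-2q} for the degree $2q$ case.
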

\begin{proof}
Let $\wt{K}/E$ be the Galois closure of $K/E$. Write $n=[K:E]$, $G=\Gal(\wt{K}/E)$, and view $G\subseteq S_n$ by \Cref{embed-into-symmetric}. Suppose first that $[K:E]=q$. By \Cref{splitting-of-prime-symmetric}, it suffices to show that $G$ contains a cycle of length $q$. Note that $q$ divides $|G|$ since $G$ is a transitive subgroup of $S_q$. As $q$ is a prime, $G$ contains an element of order $q$, which is necessarily a cycle of length $q$ since $G\subseteq S_q$. Now, suppose that $[K:E]=2q$. By \Cref{splitting-of-prime-symmetric}, it suffices to show that $G$ contains an element that is a product of two disjoint cycles of length $q$, and indeed this simply follows from \Cref{transitive-2q}.
\end{proof}

\begin{corollary}
\label{quadratic-prime-splitting}
Let $F/\Q$ be a quadratic extension and $K/\Q$ be an extension of degree $q$ for some rational prime $q\geq 3$. Then, there exists a rational prime that splits completely in $F$ and remains inert in $K$.
\end{corollary}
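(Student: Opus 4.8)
The plan is to produce the prime via the Chebotarev density theorem over a single compositum field, reducing the statement to a group-theoretic fact about transitive subgroups of $S_q$. I would set up the fields as follows: let $\wt{K}/\Q$ be the Galois closure of $K/\Q$ and put $L:=F\wt{K}$, which is Galois over $\Q$ since $F/\Q$ and $\wt{K}/\Q$ both are. Write $G:=\Gal(L/\Q)$ and, using \Cref{embed-into-symmetric}, identify $G_K:=\Gal(\wt{K}/\Q)$ with a transitive subgroup of $S_q$. Because $F/\Q$ is quadratic, $\Gal(L/F)$ is a normal subgroup of $G$ of index $2$; and since $L\supseteq\wt{K}$ the restriction $G\twoheadrightarrow G_K$ is surjective, so the image $H$ of $\Gal(L/F)$ in $G_K$ is a normal subgroup of $G_K$ with $[G_K:H]\mid 2$.

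The crux is to show that $H$ contains a $q$-cycle. Since $G_K$ is transitive in $S_q$ and $q$ is prime, $q\mid |G_K|$, so $G_K$ contains an element $\sigma$ of order $q$, which must be a $q$-cycle. If $H=G_K$, this $\sigma$ already lies in $H$; otherwise $[G_K:H]=2$, so $\sigma^2\in H$, and $\sigma^2$ is again a $q$-cycle because $q\ge 3$ is odd and hence $\gcd(2,q)=1$. Thus in all cases $H$ contains a $q$-cycle $\tau$. Now pick $g\in\Gal(L/F)$ with $g|_{\wt{K}}=\tau$; then automatically $g|_F=\id_F$. By Chebotarev there is a positive-density set of rational primes $p$, unramified in $L$, with $\Frob_p$ lying in the conjugacy class of $g$ in $G$. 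For such $p$: the restriction of $\Frob_p$ to $\Gal(F/\Q)$ is a conjugate of $g|_F=\id$, hence trivial, so $p$ splits completely in $F$; and the restriction of $\Frob_p$ to $G_K$ is a $G_K$-conjugate of $\tau$, hence a $q$-cycle, so by \Cref{splitting-of-prime-symmetric} the prime $p$ is of splitting type $(q)$ in $K$, i.e.\ inert in $K$.

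The only delicate point --- and the reason the hypothesis $q\ge 3$ is essential --- is the possibility that $F\subseteq\wt{K}$, in which case $F$ and $\wt{K}$ are not linearly disjoint and the splitting of a prime in $F$ cannot be prescribed independently of its splitting in $K$; the argument sidesteps this using only the parity fact that the square of a $q$-cycle is still a $q$-cycle for odd $q$, which genuinely fails at $q=2$ (e.g.\ $F=K=\Q(\sqrt{2})$). Apart from this, the proof is routine bookkeeping with \Cref{embed-into-symmetric} and \Cref{splitting-of-prime-symmetric}; alternatively, once a single such prime is exhibited, the positive-density strengthening also follows directly from \Cref{one-implies-positive-density}.
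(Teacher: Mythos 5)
Your proof is correct, but it takes a genuinely different route from the paper. The paper does not touch the Galois closure of $K$ directly at this point: it applies \Cref{field-of-degree-q-or-2q}(2) to the compositum $FK$, which has degree $2q$ because $q$ is odd, to get a rational prime $p$ of splitting type $(q,q)$ in $FK$ (this rests on \Cref{transitive-2q} about transitive subgroups of $S_{2q}$), and then rules out $p$ splitting in $K$ by a parity argument on inertia degrees: if $p=\fp_1\fp_2$ in $K$, the primes above $p$ in $FK$ would have even inertia degrees $2f_1,2f_2$, contradicting inertia degree $q$; hence $p$ is inert in $K$ and, splitting into two primes of $FK$, splits completely in the quadratic field $F$. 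You instead work in $L=F\wt{K}$, observe that the image $H$ of the normal index-$2$ subgroup $\Gal(L/F)$ in $\Gal(\wt{K}/\Q)\subseteq S_q$ still contains a $q$-cycle because $q$ is odd (indeed any element of order $q$ already lies in $H$, since its image in the order-$\le 2$ quotient is trivial), and then run Chebotarev on $L$, reading off complete splitting in $F$ from membership in $\Gal(L/F)$ and inertness in $K$ from \Cref{splitting-of-prime-symmetric}. Your route bypasses the $S_{2q}$ lemma entirely, handles the possible containment $F\subseteq\wt{K}$ transparently, and yields positive density directly, whereas the paper's argument produces a single prime (with positive density available afterwards via \Cref{one-implies-positive-density}) but reuses machinery it has already built for the degree-$2q$ case. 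In both arguments the hypothesis $q\ge 3$ enters as a parity condition, on inertia degrees in the paper and on the index-$2$ subgroup in yours; your counterexample $F=K=\Q(\sqrt{2})$ correctly shows the statement fails at $q=2$. One small point of phrasing: the restriction of $\Frob_p$ to $F$ is the restriction of a $G$-conjugate of $g$, and it is trivial because $\Gal(L/F)$ is normal in $G$ (equivalently, $F/\Q$ is Galois), which is what your argument implicitly uses.
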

\begin{proof}
Since $q$ is an odd prime, the composite field $FK$ is of degree $2q$. By \Cref{field-of-degree-q-or-2q}, there exists a  rational prime $p$ that splits into two distinct primes in $FK$ of the same inertia degree $q$. Then, $p$ splits into at most two primes in $K$. Suppose that $p=\fr{p}_1\fr{p}_2$ in $K$ of inertia degree $f_1$ and $f_2$, respectively. By the splitting behavior of $p$ in $FK$, both $\fr{p}_1$ and $\fr{p}_2$ remain inert in $FK$, so their inertia degrees as primes in $FK$ are $2f_1$ and $2f_2$, respectively. This contradicts that the inertia degree of the two primes in $FK$ that lie above $p$ is an odd prime $q$. Hence, $p$ must remain inert in~$K$, and as $p$ splits into two primes in $FK$, $p$ must split completely in $F$ since $F/\Q$ is quadratic. The result then follows. 
\end{proof}

\section{Results on Galois representations}
\label{section-galois-rep}

\subsection{Galois representations of abelian varieties}
\label{section-rep-abelian}

In this subsection, we will introduce the general theory of $\ell$-adic and $\lambda$-adic Galois representations of abelian varieties with extra endomorphisms. We will then specialize to the case of ``$\GL_2$-type'' abelian varieties, prove a result on the images of their Galois representations, and give a criterion, distinct from the definition, to identify their ordinary primes.

Let $A$ be an abelian variety of dimension $g$ and conductor $N_A$ defined over a number field $F$. Fix a rational prime~$\ell$. Let $T_\ell(A) := \varprojlim_n A[\ell^n]$ be the $\ell$-adic Tate module of $A$,
where $A[\ell^n]$ denotes the subgroup of $\ell^n$-torsion  points of $A(\overline{F})$. Let $V_\ell(A):=T_\ell(A)\otimes \Q_\ell$. Then, $V_\ell(A)$ is a $2g$-dimensional $\Q_\ell$-vector space, and the action of the absolute Galois group $\Gal(\ol{F}/F)$ on $V_\ell(A)$ induces a continuous Galois representation
\begin{align*}
    \rho_{A, \ell}: \Gal(\overline{F}/F) \to \Aut(V_\ell(A)) \simeq \GL_{2g}(\Q_\ell),
\end{align*}
which we call the $\ell$-adic Galois representation of $A$.\footnote{Due to the symplectic pairing on $V_\ell(A)$ coming from the polarization of $A$, the image of $\rho_{A,\ell}$ actually lies in $\GSp_{2g}(\Q_\ell)$.} This Galois representation satisfies that for all primes $v$ of $F$ with $v\nmid N_A\ell$, $\rho_{A,\ell}$ is unramified at $v$, and the characteristic polynomial
\begin{align}\label{eq:P_A}
    P_{A,v}(X):=\det(XI-\rho_{A, \ell}(\Frob_{v}))
\end{align}
is a degree $2g$ polynomial with integer coefficients, which is independent of the choice of $\ell$. By the Weil conjectures, for each embedding $\iota:K\rightarrow\C$ and each root $\alpha$ of $P_{A,v}(X)$, we have $|\iota(\alpha)|_\C=\N(v)^{1/2}$.

Now, we will discuss the case when $A$ has extra endomorphisms. The main reference for this part is \cite[II]{Ri1976}. Let $\End_F(A)$ denote the ring of $F$-endomorphisms of $A$ and let $\End_{\overline{F}}(A)$ denote the ring of geometric endomorphisms of $A$. Assume that $\End_F(A)\otimes\Q$ contains a number field~$K$. Then, it is known that $n=\frac{2g}{[K:\Q]}$ is necessarily an integer. In this case, we say that $A/F$ is \emph{of $\GL_n(K)$-type} or simply \emph{of $\GL_n$-type} without referring to the number field.

Suppose from now on that $A/F$ is of $\GL_n(K)$-type. Then, the extra endomorphisms induce more restrictions on the Galois image. Specifically, if we set $K_\ell:=K\otimes\Q_\ell$, then the Galois action on $V_\ell(A)$ is $K_\ell$-linear, and moreover $V_\ell(A)$ is a free $K_\ell$-module of rank $n$. Hence, the $\ell$-adic Galois representation can be interpreted as
\begin{align*}
     \rho_{A, \ell}: \Gal(\overline{F}/F) \to \Aut_{K_\ell}(V_\ell(A))\simeq \GL_n(K_\ell)=\GL_n(K\otimes\Q_\ell).\footnotemark
\end{align*}
\footnotetext{The shape of the Galois representation also explains the name ``$\GL_n$-type''.}
Since $K_\ell=\prod_{\lambda|\ell}K_\lambda$, for each prime $\lambda|\ell$ of $K$, we can put
\begin{align*}
    V_\lambda(A):=V_\ell(A)\otimes_{K_\ell}K_\lambda.
\end{align*}
Then, for each prime $\lambda$ of $K$, $V_\lambda(A)$ is an $n$-dimensional $K_\lambda$-vector space with the induced action of $\Gal(\overline{F}/F)$. Hence, each $V_\lambda(A)$ induces a continuous Galois representation
\begin{align*}
    \rho_{A,\lambda}:\Gal(\overline{F}/F) \to \Aut_{K_\lambda}(V_\lambda(A))\simeq \GL_n(K_\lambda),
\end{align*}
which we call the $\lambda$-adic Galois representation of $A$. Alternatively, $\rho_{A,\lambda}$ can be viewed as the composition of $\rho_{A,\ell}$ and the projection map $\GL_n(K\otimes\Q_\ell)\rightarrow\GL_n(K_\lambda)$. The Galois representations $\rho_{A,\ell}$ (under the new interpretation) and $\rho_{A,\lambda}$ (with $\lambda|\ell$) satisfy that for all primes $v$ of $F$ with $v\nmid N_A\ell$, $\rho_{A,\ell}$ and $\rho_{A,\lambda}$ are unramified at $v$, and the characteristic polynomials
\begin{align*}
    \det(XI-\rho_{A,\ell}(\Frob_v))\in(\cl{O}_K\otimes\Z)[X] \quad\text{ and }\quad\det(XI-\rho_{A,\lambda}(\Frob_v))\in\cl{O}_K[X]
\end{align*}
are of degree $n$ and independent of  $\ell$ and $\lambda$, where we used the interpretation $\rho_{A,\ell}: \Gal(\overline{F}/F)\to \GL_n(K\otimes \Q_\ell)$.
Moreover, it is known by \cite[Proposition 11.9, p. 586]{Sh1967} (see also \cite[Proposition 2.2, p. 322]{Chi1992}) that
\begin{align}
\label{char-poly-eqn}
    P_{A,v}(X)=\prod_{\sigma:K\rightarrow\C}\sigma\big(\det(XI-\rho_{A,\lambda}(\Frob_v))\big).
\end{align}

Let us now specialize to the case when $n=2$, i.e., when $A/F$ is of $\GL_2(K)$-type. For a prime $v$ of $F$ with $v\nmid N_A$ (and $v\nmid\N(\lambda)$), we write
\[
a_v(A)=\tr \rho_{{A,\lambda}}(\Frob_v) \quad\text{ and }\quad d_v(A)=\det \rho_{{A,\lambda}}(\Frob_v)
\]
so that
\begin{align*}
    \det(XI-\rho_{A,\lambda}(\Frob_v))=X^2-a_v(A)X+d_v(A)\in\cl{O}_K[X].
\end{align*}
To summarize, we have a family of Galois representations $\{\rho_{A,\ell}:\Gal(\ol{F}/F)\rightarrow\GL_2(K\otimes\Q_\ell)\}_{\ell\in\Sigma_\Q}$ such that for all rational primes $\ell$ and all primes $v$ of $F$ with $v\nmid N_A\ell$, $\rho_{A,\ell}$ is unramified at $v$, the characteristic polynomial of the Frobenius satisfies that 
\begin{align*}
    \det(XI-\rho_{A,\ell}(\Frob_v))=X^2-(a_v(A)\otimes 1)X+(d_v(A)\otimes 1)\in(\cl{O}_K\otimes\Z)[X]
\end{align*}
is of degree $2$ and independent of $\ell$, and for all embeddings $\iota:K\rightarrow\C$,
\begin{align*}
    |\iota(a_v(A))|_\C\leq 2\sqrt{\N(v)}.
\end{align*}
The last pieces of information about the Galois representations of $A$ needed for the proof of \Cref{ablian-var-theorem-3} are the determinant of $\rho_{A, \lambda}$ and image of $\rho_{A, \ell}$, which we present now.

The analysis of the shapes of the determinant and the Galois image depends heavily on \cite[Proposition~6.10]{FiFlGu2024} and \cite[Theorem~6.16]{BaGaKr2006}, and we will recall the former one here as we will need it in several places. We will also recall several definitions from \cite{FiFlGu2024} and \cite{BaGaKr2006} to make this part self-contained. Indeed, similar result with extra restrictions on the endomorphism algebra of $A$ was already established by Ribet \cite[Theorem 5.5.2, p. 801]{Ri1976} and reformulated by Lombardo \cite[Theorem 1.4 and Remark 1.6]{Lo2016}.

\begin{definition}[\cite{FiFlGu2024}]
We say that an abelian variety $A/F$ is \emph{of $\GL_n$-type} if there exists a number field $K\hookrightarrow\End_F(A)\otimes\Q$ with $n=\frac{2\dim A}{[K:Q]}$. Moreover, we say that $A/F$ is \emph{genuinely of $\GL_n$-type} if it is simple, of $\GL_n$-type, and its base change to $\ol{F}$ has no simple subvariety of $\GL_m$-type with $m<n$.
\end{definition}

\begin{definition}[\cite{FiFlGu2024}]
We say that a simple abelian variety $A/F$ is \emph{of the first kind} if $A$ is of type I, II, or III in Albert's classification, \footnote{For a detailed summary of the Albert's classification, see \cite[pp. 201--202]{Mu1970}.} or equivalently, the center of $\End_F(A)\otimes\Q$ is a totally real field. For an abelian variety $A/F$ that is genuinely of $\GL_n$-type, its base change to $\ol{F}$ is necessarily isogenous to $B^r$ for some simple abelian variety $B/\ol{F}$, and we say that $A/F$ is \emph{geometrically of the first kind} if $B$ is of the first kind.
\end{definition}

\begin{definition}[\cite{BaGaKr2006}]
 We say that an abelian variety  $A/F$ of dimension $g$ is {\emph{of class $\mathcal{A}$}} if  
 \begin{enumerate}
     \item $A/F$ is a simple, principally polarized abelian variety.
     \item $A/\ol{F}$ is of type I or II in Albert's classification and $D:=\End_{\ol{F}}(A)\otimes \Q=\End_{F}(A)\otimes \Q$.
     \item For every rational prime $\ell$, the  Zariski closure of the $\rho_{A, \ell}(\Gal(\ol{F}/F))$ in $\GL_{2g}/\Q_\ell$ is a connected algebraic group.
     \item $g=ehd$ where $h$ is an odd integer, $e$ is the degree of the  center $E$ of $D$, and $d^2=[D:E]$.
 \end{enumerate}
\end{definition}

\begin{proposition}[{\cite[Proposition~6.10]{FiFlGu2024}}]
\label{lambda-adic-rep-determinant}
Let $A/F$ be an abelian variety that is genuinely of $\GL_2$-type and geometrically of the first kind. Then, there exists a finite order character $\epsilon:\Gal(\ol{F}/F)\rightarrow K^\times$ such that for all primes $\lambda$ of $K$,
\begin{align*}
    \det\rho_{A,\lambda}=\epsilon\chi_\ell,
\end{align*}
where $\ell$ is the rational prime lying below $v$ and $\chi_\ell:\Gal(\ol{F}/F)\rightarrow K_\lambda^\times$ denotes the $\ell$-adic cyclotomic character. Moreover, if the center of $\End_F(A)\otimes\Q$ is a totally real field, then $\det\rho_{A,\lambda}=\chi_\ell$.
\end{proposition}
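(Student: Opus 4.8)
The plan is to prove that $\psi:=\det\rho_{A,\lambda}$ agrees with the cyclotomic character up to a finite‑order character valued in $K^\times$, and then, under the totally‑real‑center hypothesis, that this character is trivial. \textbf{Step 1 (set‑up).} Since $\det(XI-\rho_{A,\lambda}(\Frob_v))\in\cl{O}_K[X]$ for all $v\nmid N_A\ell$, the continuous character $\psi\colon\Gal(\ov{F}/F)\to(K\otimes\Q_\ell)^\times$ whose $\lambda$‑component is $\det\rho_{A,\lambda}$ satisfies $\psi(\Frob_v)=d_v(A)\in\cl{O}_K$, a value independent of $\ell$ and $\lambda$. Embedding $\Z_\ell^\times$ diagonally, I would set $\epsilon:=\psi\cdot\chi_\ell^{-1}$, so that $\epsilon(\Frob_v)=d_v(A)/\N(v)\in K^\times$; it then suffices to show that $\epsilon$ has finite order, is valued in $K^\times$, is independent of $\lambda$, and is trivial once the center of $\End_F(A)\otimes\Q$ is totally real.

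\textbf{Step 2 (finiteness).} I would show that $d_v(A)/\N(v)$ is a root of unity, for each $v\nmid N_A$ with rational prime $p$ below it. For every prime $\lambda'$ of $K$ above a rational prime $\ell'\neq p$, the Tate module $T_{\ell'}(A)$ is a $\Gal(\ov{F}/F)$‑stable $\cl{O}_{K_{\ell'}}$‑lattice in $V_{\ell'}(A)$, so $\rho_{A,\lambda'}(\Frob_v)\in\GL_2(\cl{O}_{K_{\lambda'}})$ and $\ord_{\lambda'}(d_v(A))=0=\ord_{\lambda'}(\N(v))$. For $\lambda'\mid p$, good reduction of $A$ at $v$ makes $\rho_{A,\lambda'}$ crystalline at $v$ with Hodge--Tate weights $0$ and $1$, so $\det\rho_{A,\lambda'}$ has the same Hodge--Tate weight at $v$ as $\chi_\ell$, and comparing the Newton and Hodge polygons gives $\ord_{\lambda'}(d_v(A))=\ord_{\lambda'}(\N(v))$. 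Thus $d_v(A)/\N(v)\in\cl{O}_K^\times$, and by the Weil bound already recorded in the excerpt all of its archimedean absolute values equal $1$, so Kronecker's theorem places it in the finite group of roots of unity of $K$. By the Chebotarev density theorem the image of $\epsilon$ is contained in the closure of the subgroup generated by the $\epsilon(\Frob_v)$, which is finite; hence $\epsilon$ has finite order, is valued in $K^\times$, and—having $\lambda$‑independent Frobenius values—does not depend on $\lambda$. (Alternatively one can black‑box Serre's dictionary between locally algebraic $\ell$‑adic characters and Hecke characters: $\psi$ is the $\ell$‑adic avatar of a weight‑$2$ algebraic Hecke character and $\chi_\ell$ that of the norm character, so $\epsilon$ is the avatar of a finite‑order Hecke character.)

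\textbf{Step 3 (triviality of $\epsilon$, and the main obstacle).} Now suppose the center $Z$ of $\End_F(A)\otimes\Q$ is totally real; since $A$ is simple, $\End_F(A)\otimes\Q$ is then a division algebra of the first kind whose elements are, by definition, $F$‑rational, and the Rosati involution of the polarization restricts to the identity on $Z$. When $K$ is itself central (type~I, i.e.\ real multiplication), the Weil pairing is a perfect alternating $K_\lambda$‑bilinear pairing $V_\lambda(A)\times V_\lambda(A)\to K_\lambda(1)$, so $\wedge^2_{K_\lambda}V_\lambda(A)\cong K_\lambda(1)$ and $\epsilon=1$. In the remaining cases (types~II and~III, where $K$ is a non‑central maximal subfield that the Rosati involution need not fix pointwise) I would instead reduce to the explicit determinant computations of Ribet \cite[Theorem~5.5.2]{Ri1976}, Banaszak--Gajda--Krasoń \cite[Theorem~6.16]{BaGaKr2006}, and Lombardo \cite[Theorem~1.4]{Lo2016}, whose hypotheses—simple, of the first kind, endomorphisms defined over $F$—are satisfied and which give $\det\rho_{A,\lambda}=\chi_\ell$; the hypothesis that $A$ is geometrically of the first kind is what licenses applying these structure results after any base change needed to realize the geometric endomorphisms. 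This last point is the main obstacle: extracting $\epsilon=1$ from ``$Z$ totally real'' when $K$ is not central forces one to track the Rosati involution through the structure theory of endomorphism algebras rather than reading it off the Weil pairing directly. The $p$‑adic Hodge‑theoretic input in Step~2 is a secondary technical point, and the passage via Kronecker's theorem and Chebotarev to the finiteness of $\epsilon$ is routine.
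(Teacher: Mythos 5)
First, a point of comparison: the paper does not prove this proposition at all --- it is imported verbatim from \cite[Proposition~6.10]{FiFlGu2024} (with the paper noting that Ribet \cite[Theorem~5.5.2]{Ri1976} and Lombardo \cite{Lo2016} proved it only under extra restrictions on the endomorphism algebra). So your attempt can only be judged on its own merits, and as it stands it has two genuine gaps. The first is in Step~2, at the primes $\lambda'\mid p$. The assertion that $\rho_{A,\lambda'}|_{\Gal(\ov{F_v}/F_v)}$ is crystalline with Hodge--Tate weights $\{0,1\}$ (one weight $0$ and one weight $1$ for each $\lambda'$ and each embedding) is not a consequence of good reduction alone: it is equivalent to the statement that every embedding of $K$ occurs with multiplicity one in $\mathrm{Lie}(A)$, i.e.\ the ``signature $(1,1)$'' condition, and this is exactly where the hypothesis ``geometrically of the first kind'' must enter (for type~IV/CM-type situations the partial weights can be $(2,0)$ and $(0,2)$ and the conclusion $\ord_{\lambda'}(d_v(A))=\ord_{\lambda'}(\N(v))$ fails). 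Your sketch treats this as automatic, and the subsequent ``Newton versus Hodge'' comparison also silently uses the decomposition of the filtered $\varphi$-module along $K\otimes\Q_p$ and admissibility of its direct summands. (Note also that this $p$-adic input cannot be dispensed with: when $K$ is an imaginary quadratic subfield of a quaternion algebra, the archimedean bound alone does not force $d_v(A)/\N(v)$ to be an algebraic integer, so Kronecker's theorem is not yet applicable.) A smaller fixable slip in the same step: $T_{\ell'}(A)$ is a priori stable only under the order $\End_F(A)\cap K$, not under $\cl{O}_K$; one should instead invoke the existence (by compactness) of some Galois-stable $\cl{O}_{K_{\lambda'}}$-lattice.

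The second gap is in Step~3, in the non-central cases (types II and III, $K$ a maximal subfield of a quaternion algebra over the totally real center $K'$). Reducing to \cite[Theorem~5.5.2]{Ri1976}, \cite[Theorem~6.16]{BaGaKr2006}, or \cite{Lo2016} is not licensed by the hypotheses of the proposition: Ribet's theorem (and Lombardo's reformulation) assumes real multiplication by a totally real field of degree $g$ equal to $\End_F(A)\otimes\Q$, which is precisely the ``extra restriction'' the present statement removes; and the class-$\cl{A}$ framework of Banaszak--Gajda--Kraso\'n excludes type~III, requires a principal polarization and connected monodromy, and in any case concerns Galois images rather than the identity $\det\rho_{A,\lambda}=\chi_\ell$ for the $K_\lambda$-valued determinant. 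So ``whose hypotheses are satisfied'' is not correct, and the hard case (e.g.\ a QM abelian variety with all endomorphisms over $F$ and $K$ an imaginary quadratic maximal subfield, where the Rosati involution does not fix $K$ pointwise) is exactly the one left unproved. Your type~I argument via a perfect alternating $K_\lambda$-bilinear Weil pairing is the right idea and is essentially the route of the cited source; to complete the proof along these lines you would need to extend that pairing/Rosati argument to types II and III using the quaternionic structure (producing a $K_\lambda$-bilinear pairing with values in $K_\lambda(1)$, possibly twisted by a finite-order character that is trivial when the center is totally real), rather than outsourcing to results whose hypotheses are strictly stronger.
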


With all the tools in hand, we are now ready to prove the results on the determinant and the Galois image.

\begin{lemma}
\label{degree-2-char-poly}
Let $A/F$ be an abelian variety of dimension $g$ and conductor $N_A$ over a number field~$F$. Assume that $A$ is absolutely simple, $\End_F(A)\otimes\Q$ contains a number field $K$ of degree~$g$, and $A/\ol{F}$ is of the first kind. Then, there exists a finite order character $\epsilon:\Gal(\ol{F}/F)\rightarrow K^\times$ such that for all rational primes $\ell$, all primes $\lambda$ of $K$ with $\lambda|\ell$, and all primes $v$ of $F$ with $v\nmid N_A\ell$, 
\begin{align*}
    \det(XI-\rho_{A,\lambda}(\Frob_v))&=X^2-a_v(A)X+\epsilon(v)\N(v)\in\cl{O}_K[X] \\
    \det(XI-\rho_{A,\ell}(\Frob_v))&=X^2-(a_v(A)\otimes1)X+(\epsilon(v)\N(v)\otimes1)\in(\cl{O}_K\otimes\Z)[X],
\end{align*}
where $\rho_{A,\ell}$ is interpreted as $\Gal(\ol{F}/F)\rightarrow\GL_2(K\otimes\Q_\ell)$.
\end{lemma}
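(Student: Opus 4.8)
The plan is to deduce this directly from \Cref{lambda-adic-rep-determinant} together with the general facts about $\GL_2$-type representations recalled above, so almost all of the proof consists of checking the hypotheses of that proposition. Since $[K:\Q]=g$, we have $n=\frac{2\dim A}{[K:\Q]}=2$, so $A/F$ is of $\GL_2(K)$-type, and its $\lambda$-adic (resp.\ $\ell$-adic) representations are the $\GL_2(K_\lambda)$-valued (resp.\ $\GL_2(K\otimes\Q_\ell)$-valued) ones discussed above. In particular, for every prime $v$ of $F$ with $v\nmid N_A\ell$, the representation $\rho_{A,\lambda}$ is unramified at $v$ and $\det(XI-\rho_{A,\lambda}(\Frob_v))=X^2-a_v(A)X+d_v(A)\in\cl{O}_K[X]$ with $a_v(A)=\tr\rho_{A,\lambda}(\Frob_v)$ and $d_v(A)=\det\rho_{A,\lambda}(\Frob_v)$, both independent of $\lambda$. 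Thus the only task is to identify the constant term $d_v(A)$.

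First I would verify that $A/F$ is \emph{genuinely of $\GL_2$-type} and \emph{geometrically of the first kind}, so that \Cref{lambda-adic-rep-determinant} applies. Since $A$ is absolutely simple, $A_{\ol F}$ is simple; in the definition of ``geometrically of the first kind'' the simple factor $B$ is then $A_{\ol F}$ itself (with $r=1$), so that notion is exactly hypothesis (3). For genuineness, being simple and of $\GL_2$-type, $A/F$ fails to be genuine only if $A_{\ol F}$ itself is of $\GL_1$-type, i.e.\ $\End_{\ol F}(A)\otimes\Q$ contains a number field of degree $2\dim A_{\ol F}$; but a simple abelian variety whose endomorphism algebra contains a field of degree equal to twice its dimension is a CM abelian variety, hence of type IV in Albert's classification, contradicting hypothesis (3). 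Therefore $A/F$ is genuinely of $\GL_2$-type.

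Now \Cref{lambda-adic-rep-determinant} produces a finite order character $\epsilon\colon\Gal(\ol{F}/F)\to K^\times$ with $\det\rho_{A,\lambda}=\epsilon\chi_\ell$ for every prime $\lambda\mid\ell$ of $K$, where $\chi_\ell$ is the $\ell$-adic cyclotomic character. Since $\epsilon=\det\rho_{A,\lambda}\cdot\chi_\ell^{-1}$ does not depend on $\ell$ and both factors are unramified at $v$ once $v\nmid N_A\ell$, the character $\epsilon$ is unramified at every $v\nmid N_A$ (choose $\ell$ prime to $v$), so $\epsilon(v):=\epsilon(\Frob_v)$ is well-defined there and is a root of unity in $K$. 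Evaluating at $\Frob_v$ for $v\nmid N_A\ell$ and using $\chi_\ell(\Frob_v)=\N(v)$ gives $d_v(A)=\epsilon(v)\N(v)$, which lies in $\cl{O}_K$ (a root of unity times a rational integer); this is the first displayed identity. For the second, one uses that $\rho_{A,\ell}$ is the representation into $\GL_2(K\otimes\Q_\ell)=\prod_{\lambda\mid\ell}\GL_2(K_\lambda)$ whose $\lambda$-component is $\rho_{A,\lambda}$, so its characteristic polynomial has $\lambda$-component $X^2-a_v(A)X+\epsilon(v)\N(v)$ for each $\lambda\mid\ell$; since $a_v(A),\epsilon(v)\N(v)\in\cl{O}_K$, this characteristic polynomial equals $X^2-(a_v(A)\otimes1)X+(\epsilon(v)\N(v)\otimes1)\in(\cl{O}_K\otimes\Z)[X]$. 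The only step with genuine content is the exclusion of $\GL_1$-type in the second paragraph; everything else is unwinding definitions and evaluating at Frobenius, so I do not anticipate a real obstacle, beyond being careful that $\epsilon$ may ramify at primes dividing $N_A$, which is precisely why the statement restricts to $v\nmid N_A\ell$.
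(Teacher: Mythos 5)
Your proposal is correct and follows essentially the same route as the paper: the paper's proof simply notes that the hypotheses make $A$ genuinely of $\GL_2$-type and geometrically of the first kind and then invokes \Cref{lambda-adic-rep-determinant} together with the earlier discussion of the $\lambda$-adic and $\ell$-adic characteristic polynomials. Your write-up just supplies the details the paper leaves as ``easy to check,'' notably the exclusion of a degree-$2g$ (CM, type IV) field via hypothesis (3), which is the right justification.
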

\begin{proof}
It is easy to check that the assumptions on $A$ guarantee that $A$ is genuinely of $\GL_2$-type and geometrically of the first kind. The result then simply follows from \Cref{lambda-adic-rep-determinant} (and our previous discussion).
\end{proof}

\begin{theorem}
\label{Galois-images}
Let $A/F$ be an abelian variety of dimension $g$ over a number field $F$. Assume that $A$ is absolutely simple, $\End_{\ol{F}}(A)\otimes\Q$ contains a number field $K$ of degree $g$,\footnote{Note that this is weaker than the assumption $K\hookrightarrow\End_F(A)\otimes\Q$ in \Cref{ablian-var-theorem-3}.} and the center of $\End_{\ol{F}}(A)\otimes\Q$ is a totally real field $K'$. Then, there exists a finite extension $F'/F$ such that for all sufficiently large prime $\ell$, we have, up to conjugation, that
\[
\rho_{A, \ell}(\Gal(\ol{F'}/F'))=\{M\in\GL_2(\cl{O}_{K'}\otimes\Z_\ell) : \det M\in  \Z_\ell^{\times}\}.
\]
\end{theorem}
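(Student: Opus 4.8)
The plan is to reduce, after a finite base change of $F$, to the setting of \cite[Theorem~6.16]{BaGaKr2006} — which computes the $\ell$‑adic monodromy of an abelian variety of class $\cl{A}$ for all but finitely many $\ell$ — and then to repackage its output in the stated form using \Cref{lambda-adic-rep-determinant}. First I would pick a finite extension $F'/F$ over which all geometric endomorphisms of $A$ are defined, so that $\End_{F'}(A)=\End_{\ol F}(A)$ (possible since $\Gal(\ol F/F)$ permutes a finite generating set of $\End_{\ol F}(A)$ through a finite quotient); by a theorem of Serre, after enlarging $F'$ one may moreover assume that the algebraic monodromy group of $\rho_{A,\ell}$ is connected for \emph{every} $\ell$ simultaneously, i.e.\ $\rho_{A,\ell}(\Gal(\ol F/F'))\subseteq G_\ell^\circ$. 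Since isogenous abelian varieties have isomorphic $\ell$‑adic representations for all but finitely many $\ell$, one may, after an isogeny and a further finite extension of $F'$, also assume $A/F'$ is principally polarized. Now $A$ is absolutely simple with totally real center $K'$, so $A/\ol F$ is of Albert type I, II, or III; writing $e=[K':\Q]$ and $d^2=[D:K']$ with $D=\End_{\ol F}(A)\otimes\Q$, a short computation with the admissibility constraints of Albert's classification shows that $\dim A=[K:\Q]=g$ forces $g=ed$ with $d\in\{1,2\}$ in each type, hence $g=ehd$ with $h=1$ odd. Thus $A/F'$ is of class $\cl{A}$ in types I and II (and type III is handled by the analogous result of \cite{BaGaKr2006}).

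Next I would apply \cite[Theorem~6.16]{BaGaKr2006} for all sufficiently large $\ell$. Writing $D\otimes\Q_\ell=\prod_{\lambda'\mid\ell}D_{\lambda'}$, for such $\ell$ every $D_{\lambda'}$ is split, say $D_{\lambda'}\cong\Mat_d(K'_{\lambda'})$, and $V_\ell(A)$ is free of rank $2/d$ over $D\otimes\Q_\ell$; Morita equivalence then identifies $\rho_{A,\ell}$, up to conjugation, with a direct sum of copies of reduced representations $\rho^{\mathrm{red}}_{A,\lambda'}\colon\Gal(\ol F/F')\to\GL_2(K'_{\lambda'})$, and \cite[Theorem~6.16]{BaGaKr2006} says the image of each $\rho^{\mathrm{red}}_{A,\lambda'}$ is, up to conjugation, the full group $\set{M\in\GL_2(\cl{O}_{K'_{\lambda'}}):\det M\in\Z_\ell^\times}$ — the scalar constraint on the similitude (determinant) factor reflecting the $\Z_\ell$‑valued Weil pairing on $T_\ell(A)$. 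In type I one has $d=1$ and $K'=K$, so this already yields the asserted description of $\rho_{A,\ell}$. In types II and III the quaternionic endomorphisms produce an inner twist — an isomorphism between $\rho_{A,\lambda}$ and its $\Gal(K/K')$‑conjugate — whose twisting character is trivial by the totally real case of \Cref{lambda-adic-rep-determinant}; I would use this to show that the $\GL_2(K\otimes\Q_\ell)$‑valued representation $\rho_{A,\ell}$ is conjugate into $\GL_2(K'\otimes\Q_\ell)$, where it agrees with $\bigoplus_{\lambda'\mid\ell}\rho^{\mathrm{red}}_{A,\lambda'}$.

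Finally I would glue the local images at the primes $\lambda'\mid\ell$. By \Cref{lambda-adic-rep-determinant}, $K'$ totally real gives $\det\rho_{A,\ell}=\chi_\ell$, whose image is all of $\Z_\ell^\times$ for large $\ell$, so the determinant maps on the various local factors all coincide. A standard Goursat‑type argument — using that for $\ell\geq 5$ a closed subgroup of $\SL_2(\cl{O}_{K'}\otimes\Z_\ell)$ surjecting modulo $\ell$ is everything, and that the groups $\SL_2(\cl{O}_{K'_{\lambda'}}/\ell)$ over distinct $\lambda'\mid\ell$ share no nonabelian simple quotient — then upgrades ``surjects onto each local factor with coinciding determinant'' to ``equals the full fibre product over $\det$'', which is exactly $\set{M\in\GL_2(\cl{O}_{K'}\otimes\Z_\ell):\det M\in\Z_\ell^\times}$. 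This gives the desired description of $\rho_{A,\ell}(\Gal(\ol{F'}/F'))$ up to conjugation.

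The hardest part is already the hard part of \cite[Theorem~6.16]{BaGaKr2006}: maximality of the image is known only outside an ineffective finite set of primes, which is precisely why the conclusion is stated for ``all sufficiently large $\ell$'' rather than in a specified range. Beyond that, the main technical care goes into the type II/III bookkeeping — descending $\rho_{A,\ell}$ from $K$ to the center $K'$ via the inner twist and identifying it with $\bigoplus_{\lambda'}\rho^{\mathrm{red}}_{A,\lambda'}$ — and into arranging the class‑$\cl{A}$ hypotheses (notably the principal polarization) by a base change and an isogeny.
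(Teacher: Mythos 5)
Your overall route is the same as the paper's: enlarge $F$ to an $F'$ over which all geometric endomorphisms are defined, the $\ell$-adic monodromy groups are connected, and $A$ is (after an isogeny) principally polarized; check that $A/F'$ is of class $\cl{A}$; invoke \cite[Theorem~6.16]{BaGaKr2006} to get $\SL_2(\cl{O}_{K'}\otimes\Z_\ell)$ inside the image for large $\ell$; and combine with $\det\rho_{A,\ell}=\chi_\ell$ from \Cref{lambda-adic-rep-determinant} to force equality (the paper finishes with a short exact-sequence/five-lemma argument where you use a Goursat-type gluing — an immaterial difference).

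The genuine gap is your treatment of type III. The admissibility computation only gives $g=ed$ with $d\in\{1,2\}$ and does not distinguish type II from type III: type III is numerically admissible here ($D$ a totally definite quaternion algebra over $K'$ with $[K':\Q]=g/2$ and $K\subset D$ a maximal subfield of degree $g$), so it cannot be dismissed by Albert's constraints, and it is \emph{not} ``handled by the analogous result of \cite{BaGaKr2006}'': class $\cl{A}$ by definition excludes type III, that paper proves no analogous statement for it, and no such statement can hold — for type III the quaternionic involution forces the monodromy into an orthogonal-type subgroup, so the image could not contain $\SL_2(\cl{O}_{K'}\otimes\Z_\ell)$ and the theorem's conclusion would fail. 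What is actually needed, and what the paper supplies, is that type III simply cannot occur for an absolutely simple $\GL_2$-type abelian variety; this is \cite[Proposition~1.7]{Wu2020} (ultimately Shimura's exceptional cases), and it is an essential input, not bookkeeping. A secondary, smaller issue: in your type II discussion, the triviality of the inner-twist character does not follow from ``the totally real case of \Cref{lambda-adic-rep-determinant}'' — that proposition controls the determinant, not a twisting character; the paper avoids this Morita/descent bookkeeping entirely because the cited form of \cite[Theorem~6.16]{BaGaKr2006} already places $\SL_2(\cl{O}_{K'}\otimes\Z_\ell)$ inside the image of $\rho_{A,\ell}$ itself.
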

\begin{proof}
 
If we replace $A$ by any abelian variety in its isogeny class over some finite extension of $F$, then the assumptions are still satisfied. Also, for all sufficiently large prime $\ell$, the image of the $\ell$-adic Galois representation remains unchanged over the field where the isogeny is defined. Since $A$ is isogenous over a finite extension of $F$ to a principally polarized abelian variety, we may assume that $A$ is principally polarized over $F$. 

Now, let $F'/F$ be a finite extension of number fields such that \footnote{The existence of this finite extension can be deduced from \cite[Theorem~4.1]{Si1992} and \cite[Lemma~2.7~and~Remark~3.1]{SiZa1995} (see also \cite{Se2013}).}
\begin{enumerate}[label=(\arabic*)]
    \item $D:=\End_{\ol{F}}(A)\otimes \Q=\End_{F'}(A)\otimes \Q$;
    \item for every rational prime $\ell$, the Zariski closure of $\rho_{A, \ell}(\Gal(\ol{F}/F'))$ in the algebraic group $\GL_{2g}/\Q_\ell$ is a connected algebraic group.
\end{enumerate}
By the assumptions and (2), $A/F'$ is simple and of $\GL_2$-type, and by \cite[Proposition~1.7]{Wu2020}, $A/F'$ is not of type III. As the center $K'$ of $D$ is assumed to be totally real, by \cite[Proposition~1.7]{Wu2020}, we have either $D=K'(=K)$ or $D$ is a division quaternion algebra over $K'$ with $[K':\Q]=g/2$. Putting all things together, it is then easy to check that $A/F'$ is of class $\cl{A}$, so by \cite[Theorem 6.16 (6.18)]{BaGaKr2006}, we have $\SL_2(\mathcal{O}_{K'}\otimes \Z_\ell) \subseteq \rho_{A, \ell}(\Gal(\ol{F'}/F'))$ for sufficiently large prime $\ell$. On the other hand, by the assumptions, $A/F'$ is (geometrically) of the first kind. As the center $K'$ of $D$ is totally real, by \Cref{lambda-adic-rep-determinant} (and gathering all primes $\lambda'$ of $K'$ with $\lambda'|\ell$), we have $\det\rho_{A,\ell}|_{\Gal(\ol{F}/F')}=\chi_\ell$ for all rational primes $\ell$.

Finally, for sufficiently large prime $\ell$, since 
\begin{equation}\label{eeq:image-inclusion}
\rho_{A, \ell}(\Gal(\ol{F'}/F'))\subseteq \{M\in\GL_2(\cl{O}_{K'}\otimes\Z_\ell) : \det M\in  \Z_\ell^{\times}\}
\end{equation}
and the exact sequence 
\[
0\to \SL_2(\mathcal{O}_{K'}\otimes \Z_\ell) \to  \{M\in\GL_2(\cl{O}_{K'}\otimes\Z_\ell) : \det M\in  \Z_\ell^{\times}\}\xrightarrow{ \det} \Z_\ell^{\times}\to 0
\]
also holds for $\rho_{A, \ell}(\Gal(\ol{F'}/F'))$, the inclusion (\ref{eeq:image-inclusion}) is actually an equality by the five lemma (see also \cite[Corollary~2.2]{Ri1975}). 
\end{proof}

To end this subsection, we will give a different criterion of ordinary primes of $A$ using the Frobenius trace $a_{v}(A)$ of the $\lambda$-adic representations. Here, we take one of the equivalent definitions of ordinary primes from \cite{Fi2024}.
\begin{definition}
     Let $A$ be an abelian variety of dimension $g$ and conductor $N_A$ defined over a number field $F$. We say that a prime $v$ of $F$ is an \emph{ordinary prime} of $A/F$ if $v\nmid N_A$ and $p\nmid a_{v,g}(A)$, where $p$ is the rational prime lying below $v$ and $a_{v,g}(A)$ is the coefficient of the term $X^g$ of $P_{A,v}(X)$ defined by \Cref{eq:P_A}.
\end{definition}

\begin{proposition}
\label{ordinary-lambda-trace}
Let $A/F$ be an abelian variety of dimension $g$ and conductor $N_A$ over a number field~$F$. Assume that $A$ is absolutely simple, $\End_F(A)\otimes\Q$ contains a number field $K$ of degree $g$, and $A/\ol{F}$ is of the first kind. Then, a prime $v\nmid N_A$ of $F$ is an ordinary prime of $A$ if and only if $\lambda \nmid (a_{v}(A))$ for all primes $\lambda|\N(v)$ of $K$.
\end{proposition}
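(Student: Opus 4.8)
The plan is to prove that each of the two conditions is equivalent to $P_{A,v}(X)$ having exactly $g$ roots that are $p$-adic units, where $p$ is the rational prime below $v$ and $g=\dim A=[K:\Q]$. On the ``ordinary'' side this is the classical characterization of ordinary reduction: $p\nmid a_{v,g}(A)$ if and only if the Newton polygon of $P_{A,v}(X)$ (normalized by $\N(v)$) has only slopes $0$ and $1$, which --- using that the Frobenius eigenvalues are algebraic integers with $|\iota(\alpha)|_\C=\N(v)^{1/2}$ and that they come in pairs $\{\alpha,\N(v)/\alpha\}$ --- happens exactly when $g$ of the $2g$ eigenvalues are $p$-adic units; equivalently, the middle coefficient $a_{v,g}(A)$ is then the unique minimal-valuation term in $e_g(\alpha_1,\dots,\alpha_{2g})$, so has $p$-adic valuation $0$. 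I would record this as a short lemma (or cite it) rather than reprove it in detail.

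The substantive part is to count the $p$-adic unit roots of $P_{A,v}(X)$ in terms of the ideal $(a_v(A))\subseteq\cl{O}_K$. Combining \Cref{char-poly-eqn} with \Cref{degree-2-char-poly}, and regarding each complex embedding of $K$ as an embedding $\sigma\colon K\hookrightarrow\ov{\Q}$, I would write
\[
P_{A,v}(X)=\prod_{\sigma: K\hookrightarrow\ov{\Q}}\bigl(X^2-\sigma(a_v(A))\,X+\sigma(\epsilon(v))\,\N(v)\bigr).
\]
Now fix $\iota_p\colon\ov{\Q}\hookrightarrow\ov{\Q_p}$ and apply it term by term. Since $\epsilon$ has finite order, each $\iota_p(\sigma(\epsilon(v)))$ is a $p$-adic unit, while $\ord_p(\N(v))=\deg v>0$; a one-line Newton-polygon check on a quadratic $X^2-bX+c$ with $\ord_p(c)>0$ shows it has one $p$-adic unit root when $\ord_p(b)=0$ and none otherwise. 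Hence the number of $p$-adic unit roots of $P_{A,v}(X)$ equals $\#\{\sigma: K\hookrightarrow\ov{\Q}\mid \ord_p(\iota_p(\sigma(a_v(A))))=0\}$.

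Finally, by \Cref{places-qlbarembeddings} each $\sigma$ determines a prime $\lambda_\sigma\mid p$ of $K$, with $\ord_p(\iota_p(\sigma(a_v(A))))=0$ precisely when $\lambda_\sigma\nmid(a_v(A))$; moreover, exactly $[K_\lambda:\Q_p]$ of the embeddings $\sigma$ induce a given prime $\lambda\mid p$. Therefore the unit-root count equals $\sum_{\lambda\mid p,\ \lambda\nmid(a_v(A))}[K_\lambda:\Q_p]$, and since $\sum_{\lambda\mid p}[K_\lambda:\Q_p]=[K:\Q]=g$ with every local degree positive, this count is $g$ if and only if $\lambda\nmid(a_v(A))$ for all $\lambda\mid p$ --- equivalently for all $\lambda\mid\N(v)$, as $\N(v)$ is a power of $p$. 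Together with the first paragraph this is exactly the claim. The step I expect to need the most care is the bookkeeping between the $g$ quadratic factors, the embeddings $K\hookrightarrow\ov{\Q_p}$, and the primes of $K$ above $p$ weighted by their local degrees --- which is precisely what \Cref{places-qlbarembeddings} is for --- while the Newton-polygon/Weil-bound input in the first paragraph is routine but must be stated precisely.
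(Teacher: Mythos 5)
Your proposal is correct, but it takes a genuinely different route from the paper. The paper's own proof is essentially a two-line congruence: expanding the product in \Cref{char-poly-eqn} (combined with \Cref{degree-2-char-poly}), every contribution to the coefficient of $X^g$ other than $\prod_{\sigma}\sigma(a_v(A))$ is divisible by $\N(v)$, so $a_{v,g}(A)=\N_{K/\Q}(a_v(A))+\N(v)\alpha$ with $\alpha$ an algebraic integer that is forced to lie in $\Z$ by rationality; since $\N(v)$ is a power of $p$, this gives $p\nmid a_{v,g}(A)$ if and only if $p\nmid\N_{K/\Q}(a_v(A))$, and the statement follows from the standard fact that $p\mid\N_{K/\Q}(\beta)$ exactly when some $\lambda\mid p$ divides $(\beta)$. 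You instead route through the slope characterization: ordinariness is rephrased as ``exactly $g$ $p$-adic unit roots of $P_{A,v}$'', the unit roots are counted factor by factor in the quadratic decomposition, and the embeddings $K\hookrightarrow\ov{\Q}$ are converted into primes $\lambda\mid p$ weighted by local degrees. This is sound: the Newton-polygon input in your first paragraph is standard (and the ``at most $g$ unit roots'' part could even be read off from your own quadratic factorization, since each factor has constant term of positive valuation), and the bookkeeping step is fine, though note that \Cref{places-qlbarembeddings} only gives the bijection with places up to $\Gal(\ov{\Q_p}/\Q_p)$-conjugacy, so the claim that each $\lambda$ is induced by exactly $[K_\lambda:\Q_p]$ embeddings is an additional (standard) fact you should state. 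The trade-off: the paper's argument is shorter and purely elementary, while yours proves the sharper local statement that the number of unit Frobenius eigenvalues equals $\sum_{\lambda\mid p,\ \lambda\nmid(a_v(A))}[K_\lambda:\Q_p]$, i.e., it locates the slope-zero part prime by prime and hence also yields the $\lambda$-ordinary refinement, which is invisible in the norm congruence.
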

\begin{proof}
It is an easy fact from algebraic number theory that for a rational prime $p$, $p\nmid\N_{K/\Q}(a_v(A))$ if and only if $\lambda\nmid (a_v(A))$ for all primes $\lambda|p$ of $K$. We will thus prove that $v$ is an ordinary prime of $A$ if and only if $p\nmid\N_{K/\Q}(a_v(A))$ for the rational prime $p$ lying below $v$.

By \Cref{char-poly-eqn},
\begin{align*}
     P_{A,v}(X)=\prod_{\sigma:K\rightarrow\C}(X^2-\sigma(a_v(A))X+\sigma(\epsilon(v))\N(v)).
\end{align*}
Since $\#\{\sigma:K\rightarrow\C\}=g$, it is easy to see that the coefficient $a_{v,g}(A)$ of the term $X^g$ is of the form
\begin{align*}
    a_{v,g}(A)=\prod_{\sigma:K\rightarrow\C}\sigma(a_v(A))+\N(v)\alpha=\N_{K/\Q}(a_v(A))+\N(v)\alpha
\end{align*}
for some algebraic integer $\alpha$. Note that both $a_{v,g}(A)$ and $\N_{K/\Q}(a_v(A))$ are integers, so $\alpha\in\Q$, and hence $\alpha\in\Z$. In particular, $p\nmid a_{v,g}(A)$ if and only if $p\nmid\N_{K/\Q}(a_v(A))$. The result then follows.
\end{proof}

\subsection{Galois representations of modular forms}
\label{galois-rep-of-modular-form-section}

In this subsection, we will introduce the $\ell$-adic Galois representations of modular forms and recall the properties of the images of those Galois representations. We will also briefly review the Eichler--Shimura theory and relate the $p$-ordinariness of weight $2$ modular forms to the $p$-ordinariness of the associated abelian varieties.

Let $f\in S_k(\Gamma_0(N), \epsilon)$ be a newform of weight $k$,  level $N$, and nebentypus $\epsilon$, i.e., $f$ is a normalized Hecke eigenform which is new in level $N$ and $\epsilon:\Z\rightarrow\C$ is any Dirichlet character of conductor dividing $N$.  Let $a_n(f)$ be the $n$-th Fourier coefficient of $f$ and let $K_f=\Q(\{a_n(f):(n,N)=1\})$ be the coefficient field of $f$. It is known that each $a_n(f)$ is an algebraic integer, $K_f$ is a number field, and $\cl{O}_{K_f}$ contains the image of $\epsilon$. Let $\ell$ be any rational prime. By the works of Eichler--Shimura for $k=2$, Deligne for $k\geq 2$ \cite{De1973}, and Deligne--Serre for $k=1$ \cite{DeSe1974}, there exists a continuous Galois representation
\begin{align*}
    \rho_{f, \ell}: \Gal(\ol{\Q}/\Q)\rightarrow\GL_2(K_f\otimes\Q_\ell)
\end{align*}
such that for every rational prime $p\nmid\ell N$, $\rho_{f,\ell}$ is unramified at $p$ and
\begin{align*}
    \det(XI-\rho_{f,\ell}(\Frob_p))=X^2-(a_p(f)\otimes 1)X+(\epsilon(p)p^{k-1}\otimes 1)\in (\cl{O}_{K_f}\otimes\Z)[X].
\end{align*}
By the Ramanujan conjecture, it is known that $|\iota(a_p(f))|_\C\leq 2p^{\frac{k-1}{2}}$ for all embeddings $\iota: K\hookrightarrow\C$.

As in the case of abelian varieties, the image  of $\rho_{f, \ell}$ is impacted by the ``inner twist" of $f$, which we introduce now. The main references for this part are \cite{Ri1980} and \cite{Ri1985}. The modular form $f$ is said to have \emph{complex multiplication (CM)} or called a \emph{CM modular form} if there exists a nontrivial Dirichlet character $\chi$ such that $a_p(f)=\chi(p)a_p(f)$ for all but finitely many primes $p$. More generally, the modular form $f$ is said to have \emph{inner twist} if there exists a nontrivial Dirichlet character $\chi$ and an embedding $\sigma:K_f\rightarrow\C$ such that
\begin{align*}
    \sigma(a_p(f))=\chi(p)a_p(f)
\end{align*}
for all but finitely many primes $p$. We also say that $f$ is a \emph{non-CM modular form} if $f$ has no complex multiplication.

Suppose that $f$ is a non-CM modular form. Then, for each embedding $\sigma:K_f\rightarrow\C$, there exists at most one Dirichlet character $\chi=\chi_\sigma$ such that $f$ has inner twist by $(\sigma,\chi)$.
All such embeddings $\sigma$ form a group $\Gamma_f$ and we set
\[
H_f: = \bigcap_{\sigma\in \Gamma_f}\Ker\chi_\sigma, \quad F_f':=\ol{\Q}^{H_f}, \quad K_f': = K_f^{\Gamma_f}.
\]
Consequently, we have the following big image  theorem for $\rho_{f, \ell}$, where $f$ possibly has inner twist. 
\begin{theorem}[{\cite[Theorem~3.1]{Ri1985}}]
We keep the notation above. Let $k\geq 2$ and $f\in S_k(\Gamma_0(N), \epsilon)$ be a non-CM newform. Then for all sufficiently large prime $\ell$, we have, up to conjugation, that
\[
\rho_{f, \ell}(\Gal(\ol{\Q}/F_f'))= \big\{M\in \GL_2(\mathcal{O}_{K_f'}\otimes \Z_\ell): \det M \in (\Z_\ell^\times)^{k-1}\big\}.
\]
\end{theorem}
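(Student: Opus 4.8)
The plan is to follow Ribet's original argument, since the statement is exactly \cite[Theorem~3.1]{Ri1985}; what I will do is recall its architecture rather than reprove everything from scratch. The proof splits into three stages: (i) descend $\rho_{f,\ell}$, after restriction to $\Gal(\ol{\Q}/F_f')$, to a representation valued in $\GL_2(K_f'\otimes\Q_\ell)$; (ii) show that the image of $\Gal(\ol{\Q}/F_f')$ contains $\SL_2(\cl{O}_{K_f'}\otimes\Z_\ell)$ once $\ell$ is large; and (iii) pin down the determinant and conclude via the five lemma. Throughout, one uses the standard fact that a non-CM newform $f$ has $\rho_{f,\ell}$ absolutely irreducible upon restriction to every open subgroup of $\Gal(\ol{\Q}/\Q)$, so in particular $\rho_{f,\ell}|_{\Gal(\ol{\Q}/F_f')}$ is absolutely irreducible.

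For stage (i) I would let $\Gamma_f$ act on $K_f\otimes\Q_\ell$ through its action on $K_f$. For each $\sigma\in\Gamma_f$ the inner twist by $(\sigma,\chi_\sigma)$ upgrades to an isomorphism ${}^{\sigma}\rho_{f,\ell}\cong\rho_{f,\ell}\otimes\chi_\sigma$ of $\ell$-adic representations. Restricting to $\Gal(\ol{\Q}/F_f')=\Gal(\ol{\Q}/\ol{\Q}^{H_f})$ trivializes every $\chi_\sigma$ with $\sigma\in\Gamma_f$, so all the conjugates ${}^{\sigma}\rho_{f,\ell}$ become isomorphic to $\rho_{f,\ell}$ there; comparing traces shows $\tr\rho_{f,\ell}(g)\in K_f^{\Gamma_f}=K_f'$ for all $g\in\Gal(\ol{\Q}/F_f')$. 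Since the restricted representation is absolutely irreducible and already realized over $K_f\otimes\Q_\ell$, a descent argument — the potential Brauer obstruction to realizing it over $K_f'\otimes\Q_\ell$ being controlled by this and shown by Ribet to vanish — conjugates $\rho_{f,\ell}|_{\Gal(\ol{\Q}/F_f')}$ into $\GL_2(K_f'\otimes\Q_\ell)$, and compactness of the image permits a further conjugation into $\GL_2(\cl{O}_{K_f'}\otimes\Z_\ell)$.

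For stage (ii) one must first show the Zariski closure of $\rho_{f,\ell}(\Gal(\ol{\Q}/F_f'))$ is as large as the constraints from inner twists and the determinant permit, equivalently that the associated $\ell$-adic Lie algebra contains $\mathfrak{sl}_2$; this is the genuinely deep step, and it uses Faltings' theorem on endomorphisms of abelian varieties together with the classification of algebraic subgroups of $\GL_2$, exactly as in Ribet's analysis and as in the abelian-variety analogue \Cref{Galois-images} (which invokes \cite[Theorem~6.16]{BaGaKr2006}). Granting maximality of the Lie algebra, for all but finitely many $\ell$ the residual representation $\ol{\rho}_{f,\ell}|_{\Gal(\ol{\Q}/F_f')}$ has image containing $\SL_2(\cl{O}_{K_f'}/\ell)$, and Serre's lifting lemma — a closed subgroup of $\SL_2(\cl{O}_{K_f'}\otimes\Z_\ell)$ surjecting onto $\SL_2(\cl{O}_{K_f'}/\ell)$ is the whole group for $\ell$ large — upgrades this to $\SL_2(\cl{O}_{K_f'}\otimes\Z_\ell)\subseteq\rho_{f,\ell}(\Gal(\ol{\Q}/F_f'))$.

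Finally, for stage (iii), from $\det\rho_{f,\ell}=\epsilon\cdot\chi_\ell^{k-1}$ and the relation $\sigma(\epsilon)=\epsilon\,\chi_\sigma^{2}$ linking the nebentypus to the inner-twist characters, the determinant restricted to $\Gal(\ol{\Q}/F_f')$ takes values in $(\Z_\ell^\times)^{k-1}$; since $F_f'$ is a fixed number field we have $F_f'\cap\Q(\mu_{\ell^\infty})=\Q$ for $\ell$ large, so $\chi_\ell$ and hence $\det\rho_{f,\ell}$ surjects onto $\Z_\ell^\times$, resp.~$(\Z_\ell^\times)^{k-1}$, on $\Gal(\ol{\Q}/F_f')$. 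Combining this with stage (ii) through the five lemma applied to the determinant exact sequence, exactly as at the end of the proof of \Cref{Galois-images}, turns the evident inclusion $\rho_{f,\ell}(\Gal(\ol{\Q}/F_f'))\subseteq\{M\in\GL_2(\cl{O}_{K_f'}\otimes\Z_\ell):\det M\in(\Z_\ell^\times)^{k-1}\}$ into an equality. I expect stage (ii) — maximality of the image modulo these structural constraints, uniformly for $\ell\gg 0$ — to be the main obstacle, since it is the only part that genuinely needs Faltings-type input and a uniform handle on residual images.
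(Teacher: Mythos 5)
This statement is quoted verbatim from Ribet (\cite[Theorem~3.1]{Ri1985}); the paper itself gives no proof, only the citation, so there is no internal argument to compare yours against. Judged against Ribet's published proof, your outline captures the correct architecture: (i) inner twists trivialize on $H_f$, traces on $\Gal(\ol{\Q}/F_f')$ land in $K_f'$, and a descent (the Brauer-type obstruction being shown to vanish) realizes the restricted representation over $K_f'\otimes\Q_\ell$ and, by compactness, over $\cl{O}_{K_f'}\otimes\Z_\ell$; (ii) largeness of the image, giving $\SL_2(\cl{O}_{K_f'}\otimes\Z_\ell)$ inside the image for $\ell\gg0$; (iii) the determinant computation using $\det\rho_{f,\ell}=\epsilon\chi_\ell^{k-1}$ and the relation ${}^{\sigma}\epsilon=\epsilon\chi_\sigma^2$, followed by the extension/five-lemma argument exactly as at the end of \Cref{Galois-images}. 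One substantive caveat: your identification of the ``deep step'' in stage (ii) is off for general weight. Faltings' endomorphism theorem is the natural input only in the weight-$2$ case, where the representation comes from an abelian variety (this is the route of \cite{Ri1976} and of \Cref{Galois-images} via \cite{BaGaKr2006}); for arbitrary $k\geq2$ Ribet and Momose instead control the residual images for almost all $\ell$ by analyzing the restriction to inertia at $\ell$ (fundamental characters, determined by the weight) together with Serre-style classification of the maximal subgroups of $\GL_2(\F_\ell)$, with the non-CM hypothesis ruling out the dihedral/potentially abelian possibilities uniformly in $\ell$; the passage from large residual image to $\SL_2(\cl{O}_{K_f'}\otimes\Z_\ell)$ is then the lifting lemma you cite. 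So your proposal is an accurate sketch of the intended proof modulo this misattribution of the key uniform-largeness input, which is precisely the part that cannot be waved through.
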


We will now relate the ordinary primes of modular forms (of weight $2$) to the ones of abelian varieties. Let us first recall our definition of ordinary primes for modular forms.

\begin{definition}
Let $f$ be a newform of weight $k\geq 2$ and level $N$, let $a_n(f)$ be the $n$-th Fourier coefficient of $f$, and let $K_f$ be the coefficient field of $f$. For a rational prime $p\nmid N$ and a prime $\lambda|p$ of $K_f$, $f$ is called \emph{$\lambda$-ordinary} if $\lambda\nmid(a_p(f))$, and $f$ is called \emph{$p$-ordinary} if $f$ is $\lambda$-ordinary for all $\lambda|p$, or equivalently, $\lambda\nmid(a_p(f))$ for all primes $\lambda|p$ of $K_f$.
\end{definition}

\begin{remark}
It is an easy fact from algebraic number theory that for any number field $K$, $\alpha\in\cl{O}_K$, and rational prime $p$, $p|\N_{K/\Q}(\alpha)$ if and only if $\lambda|(\alpha)$ for some prime $\lambda|p$ of $K$. From this, one can give another equivalent definition that for a rational prime $p\nmid N$, $f$ is called \emph{$p$-ordinary} if $p\nmid\N_{K_f/\Q}(a_p(f))$.
\end{remark}

Let us now specialize to the case when $k=2$, i.e., $f\in S_2(\Gamma_0(N),\epsilon)$ is a newform of weight $2$. By the Eichler--Shimura theory, one can associate $f$ with an abelian variety $A_f/\Q$ \footnote{This abelian variety $A_f/\Q$ is constructed by taking a quotient of the Jacobian of the modular curve $X_1(N)$, and we refer to \cite[Section~6.6]{DiSh2005} and \cite{Ro1997} for a detailed review of the Eichler--Shimura theory.} of dimension $g=[K_f:\Q]$ such that $K_f=\End_{\Q}(A_f)\otimes\Q$ and that for all rational primes $p\nmid N$, $A_f$ has good reduction at $p$ and
\begin{align*}
    P_{A_f,p}(X)=\prod_{\sigma:K_f\rightarrow\C}\big(X^2-\sigma(a_p(f))X+\sigma(\epsilon(p))p\big),
\end{align*}
where $P_{A_f,p}(X)$ is the polynomial defined in \Cref{eq:P_A}.

\begin{proposition}
\label{f-ordinary-iff-Af-ordinary}
Let $f\in S_2(\Gamma_0(N),\epsilon)$ be a newform and let $A_f/\Q$ be the associated abelian variety. Then, for any rational prime $p\nmid N$, $f$ is ordinary at $p$ if and only if $A_f$ is ordinary at $p$.
\end{proposition}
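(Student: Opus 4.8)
The plan is to reduce the claimed equivalence to a direct comparison of the mod-$p$ factorizations of $P_{A_f,p}(X)$ and of the coefficient $a_p(f)$, using the product formula for $P_{A_f,p}(X)$ over the embeddings $\sigma\colon K_f\hookrightarrow\C$ that the Eichler--Shimura theory provides. By definition, $f$ is ordinary at $p$ precisely when $p\nmid\N_{K_f/\Q}(a_p(f))$ (the equivalent formulation recalled in the remark preceding the proposition), and $A_f$ is ordinary at $p$ precisely when $p\nmid a_{p,g}(A_f)$, where $a_{p,g}(A_f)$ is the coefficient of $X^g$ in $P_{A_f,p}(X)$ and $g=[K_f:\Q]$. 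So it suffices to show that $p\mid a_{p,g}(A_f)$ if and only if $p\mid\N_{K_f/\Q}(a_p(f))$.

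The key computation is to expand
\[
P_{A_f,p}(X)=\prod_{\sigma\colon K_f\rightarrow\C}\bigl(X^2-\sigma(a_p(f))X+\sigma(\epsilon(p))p\bigr)
\]
and extract the coefficient of $X^g$. Since the product has $g$ quadratic factors, the degree-$g$ term arises by choosing, from each factor, either the $X$-term contribution $-\sigma(a_p(f))X$ or the constant $\sigma(\epsilon(p))p$ or the $X^2$-term — in such a way that the total $X$-degree is $g$. The unique contribution with no factor of $p$ comes from picking $-\sigma(a_p(f))X$ from every single factor, giving $(-1)^g\prod_\sigma\sigma(a_p(f))=(-1)^g\N_{K_f/\Q}(a_p(f))$; every other way of obtaining the $X^g$-coefficient uses at least one constant term $\sigma(\epsilon(p))p$ and hence contributes a multiple of $p$. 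Therefore $a_{p,g}(A_f)=(-1)^g\N_{K_f/\Q}(a_p(f))+p\beta$ for some algebraic number $\beta$; since both $a_{p,g}(A_f)$ and $\N_{K_f/\Q}(a_p(f))$ are rational integers, $\beta\in\Z$, and consequently $p\mid a_{p,g}(A_f)$ if and only if $p\mid\N_{K_f/\Q}(a_p(f))$. This is essentially the same bookkeeping already carried out in the proof of \Cref{ordinary-lambda-trace}, applied with $n=2$, $K=K_f$, and $A=A_f$ (indeed $A_f/\Q$ satisfies the hypotheses there, being genuinely of $\GL_2$-type with totally commutative endomorphism algebra $K_f$), so one could alternatively just cite that proposition.

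The remaining step is purely formal: combine the displayed congruence with the algebraic-number-theory fact (stated in the remark above) that $p\mid\N_{K_f/\Q}(a_p(f))$ if and only if $\lambda\mid(a_p(f))$ for some prime $\lambda\mid p$ of $K_f$, i.e. if and only if $f$ fails to be $p$-ordinary. Likewise $p\mid a_{p,g}(A_f)$ if and only if $A_f$ fails to be ordinary at $p$. Chaining these equivalences gives the proposition. I do not anticipate a genuine obstacle here; the only mild care needed is to make sure the "every other monomial has a factor of $p$" claim is stated correctly — in particular that choosing an $X^2$-term from one factor forces choosing a constant term somewhere else to keep the total degree at $g$, so that no $p$-free term other than $(-1)^g\N_{K_f/\Q}(a_p(f))$ survives.
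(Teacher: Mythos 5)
Your argument is correct and essentially identical to the paper's proof: both expand the Eichler--Shimura product formula for $P_{A_f,p}(X)$, observe that the only $p$-free contribution to the $X^g$-coefficient is $\pm\N_{K_f/\Q}(a_p(f))$ (all other terms pick up a constant factor $\sigma(\epsilon(p))p$), and conclude by integrality that $p\mid a_{p,g}(A_f)$ iff $p\mid\N_{K_f/\Q}(a_p(f))$. The only caveat is your parenthetical suggestion to instead cite \Cref{ordinary-lambda-trace}: $A_f$ need not satisfy its hypotheses (absolute simplicity and being of the first kind can fail, e.g.\ for CM forms), so the self-contained computation you give is the right route, exactly as in the paper.
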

\begin{proof}
Note that $f$ is ordinary at $p$ if and only if $p\nmid \N_{K_f/\Q}(a_p(f))$. We will prove that the latter is equivalent to $A_f$ being ordinary at $p$. For all primes $p\nmid N$, 
\begin{align*}
    P_{A_f,p}(X)=\prod_{\sigma:K_f\rightarrow\C}\big(X^2-\sigma(a_p(f))X+\sigma(\epsilon(p))p\big).
\end{align*}
Since $\#\{\sigma:K_f\rightarrow\C\}=[K_f:\Q]=:g$, it is easy to see that the coefficient $a_{p,g}(A_f)$ of the term $X^g$ is of the form
\begin{align*}
    a_{p,g}(A_f)=\prod_{\sigma:K_f\rightarrow\C}\sigma(a_p(f))+p\alpha=\N_{K_f/\Q}(a_p(f))+p\alpha
\end{align*}
for some algebraic integer $\alpha$. Note that both $a_{p,g}(A_f)$ and $\N_{K_f/\Q}(a_p(f))$ are integers, so $\alpha\in\Q$, and hence $\alpha\in\Z$. In particular, $p\nmid a_{p,g}(A_f)$ if and only if $p\nmid\N_{K_f/\Q}(a_p(f))$. The result then follows.
\end{proof}

\section{Proof of \Cref{inert-in-K-theorem}}\label{sec:easier-thm}

We will first give a proof of \Cref{inert-in-K-theorem} since its proof is much easier. We will adopt all the notations in \Cref{inert-in-K-theorem} unless stated otherwise. Recall that in this case, we have a rational prime~$p$ that lies below a degree $1$ prime of $F$ and remains inert in $K$.

\begin{lemma}
\label{p1-lemma}
Let $F$ and $K$ be two number fields and let $S$ be a finite set of primes of $F$. Let $(a_v)_{v\in\Sigma_F\setminus S}$ be a fixed sequence in $\cl{O}_K$ satisfying that
\begin{align*}
    |\iota(a_v)|_\C\leq 2\N(v)^{1/2}
\end{align*}
for all $v\in\Sigma_F$ with $v\notin S$ and all embeddings $\iota:K\rightarrow\C$. Let $v\notin S$ be a degree $1$ prime of $F$ and let $p$ be the rational prime lying below $v$. Suppose that $p\geq 5$ and $p$ is inert in $K$. Then, $a_v=0$ or $v\in S^{\ord}$.
\end{lemma}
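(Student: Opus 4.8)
The plan is to analyze the single prime $v$ above $p$ directly using the inertness of $p$ in $K$ together with the archimedean bound on $a_v$. First I would record the algebraic input: since $v$ has degree $1$ over $\Q$, we have $\N(v)=p$; and since $p$ is inert in $K$, there is a unique prime $\lambda$ of $K$ above $p$, namely $\lambda = p\cl O_K$, with residue degree $[K:\Q]$ and $\N(\lambda)=p^{[K:\Q]}$. Thus $v\in S^{\ord}$ precisely when $\lambda\nmid(a_v)$, i.e.\ when $p\nmid a_v$ in $\cl O_K$ (equivalently $a_v\notin p\cl O_K$). So the claim reduces to: if $v\notin S^{\ord}$ and $a_v\neq 0$, then we get a contradiction.

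So suppose $a_v\neq 0$ and $p\mid a_v$ in $\cl O_K$, say $a_v = p\beta$ with $\beta\in\cl O_K$, $\beta\neq 0$. The key step is to feed this into the archimedean bound: for every embedding $\iota:K\hookrightarrow\C$ we have $|\iota(a_v)|_\C\leq 2\N(v)^{1/2}=2\sqrt p$, hence $|\iota(\beta)|_\C = |\iota(a_v)|_\C/p \leq 2/\sqrt p$. Now take the product over all embeddings: $|\N_{K/\Q}(\beta)| = \prod_\iota |\iota(\beta)|_\C \leq (2/\sqrt p)^{[K:\Q]}$. When $p\geq 5$ we have $2/\sqrt p < 1$, so $|\N_{K/\Q}(\beta)| < 1$. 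But $\beta$ is a nonzero algebraic integer, so $\N_{K/\Q}(\beta)$ is a nonzero rational integer, forcing $|\N_{K/\Q}(\beta)|\geq 1$ — a contradiction. Therefore either $a_v = 0$ or $v\in S^{\ord}$, as claimed.

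The argument is essentially a one-line norm estimate once the number-theoretic dictionary is set up, so there is no serious obstacle; the only points requiring a modicum of care are (i) justifying that $\lambda=p\cl O_K$ is the unique prime above $p$ and that $\lambda\mid(a_v)$ is equivalent to $a_v\in p\cl O_K$ when $p$ is inert, and (ii) noting that $\N(v)=p$ uses the degree-$1$ hypothesis on $v$ (without it one would only get $|\iota(\beta)|_\C\leq 2\N(v)^{1/2}/p$, which still works provided $\N(v)<p^2/4$, but the clean statement here relies on $\deg v = 1$). I would also remark in passing that the threshold $p\geq 5$ is exactly what makes $2/\sqrt p<1$; for $p\in\{2,3\}$ one has $2/\sqrt p\geq 1$ and the estimate degenerates, which is why these small primes are excluded.
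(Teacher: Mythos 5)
Your proof is correct and follows essentially the same route as the paper: both reduce to the observation that inertness of $p$ in $K$ together with $\N(v)=p$ forces $\N_{K/\Q}$-divisibility by $p^{[K:\Q]}$ when $v\notin S^{\ord}$, which contradicts the archimedean bound $\prod_\iota|\iota(a_v)|_\C\leq 2^{[K:\Q]}p^{[K:\Q]/2}$ once $p\geq 5$, unless $a_v=0$. The paper states this as $p^{[K:\Q]}\mid\N_{K/\Q}(a_v)$ with $2^{[K:\Q]}p^{[K:\Q]/2}<p^{[K:\Q]}$, while you factor $a_v=p\beta$ and show $|\N_{K/\Q}(\beta)|<1$; this is only a cosmetic difference.
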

\begin{proof}
Suppose that $v\notin S^{\ord}$. Then, there exists a prime $\lambda$ of $K$ with $\lambda|\N(v)$ such that $\lambda|(a_v)$, so $\N(\lambda)|\N((a_v))$ in $\Z$.
By the assumption on $(a_v)$,
\begin{align*}
    \N((a_v))=\N_{K/\Q}(a_v)=\bigg|\prod_{\sigma:K\hookrightarrow\ov{\Q}}\sigma(a_v)\bigg|=\prod_{\iota:K\rightarrow\C}|\iota(a_v)|_\C\leq 2^g\N(v)^{g/2},
\end{align*}
where we write $[K:\Q]=g$. By the assumptions on $v$ and $p$, we have $\N(v)=p$ and $\N(\lambda)=p^g$. Hence, $p^g|\N((a_v))$ with $\N((a_v))\leq 2^gp^{g/2}$. As $p\geq 5$, this implies that $\N((a_v))=0$ so $a_v=0$.
\end{proof}

We continue with the proof of \Cref{inert-in-K-theorem}. Let $T_1$ be the set of rational primes $p$ such that 
\begin{enumerate}[label=\rm{(\arabic*)}]
    \item $p$ lies below a degree $1$ prime of $F$;
    \item $p$ is inert in $K$;
    \item $p\geq 5$.
    \item $p$ does not lie below any prime in $S$. 
\end{enumerate}
By the assumption in \Cref{inert-in-K-theorem}, there exists a rational prime $p$ satisfying both (1) and (2), and by \Cref{one-implies-positive-density}, there exists positive density rational primes having the same splitting type as $p$ in both $F$ and $K$. In particular, these primes all satisfy both (1) and (2), implying that $T_1$ has positive density. 
Now, consider the set of primes of $F$ that lie above the primes in $T_1$, i.e., consider
\begin{align*}
    S_1:=\{v\in\Sigma_F: \N(v)=p\text{ for some }p\in T_1\}.
\end{align*}
By \Cref{p1-lemma}, if $v\in S_1$, then $a_v=0$ or $v\in S^{\ord}$. Since $\{v\in\Sigma_F:v\notin S\text{ and }a_v=0\}$ is assumed to have density $0$, $S_1$ thus has the same density as $S_1\cap\{v\in\Sigma_F:v\notin S\text{ and }a_v\neq 0\}\subseteq S^{\ord}$, so it suffices to prove that $S_1$ has positive density.

Note that each prime $p\in T_1$ gives at least one prime $v\in S_1$, so for $X>0$
\begin{align*}
    \#\{v\in S_1:\N(v)\leq X\}\geq\#\{p\in T_1:p\leq X\}.
\end{align*}
On the other hand, by Landau's prime ideal theorem (and the usual prime number theorem),
\begin{align*}
    \#\{v\in\Sigma_F:\N(v)\leq X\}\sim\frac{X}{\log X}\sim\#\{p:p\leq X\}.
\end{align*}
Hence,
\begin{align*}
    \mathrm{Density}(S_1)=\lim_{X\rightarrow\infty}\frac{\#\{v\in S_1:\N(v)\leq X\}}{\#\{v\in\Sigma_F:\N(v)\leq X\}}&\geq\lim_{X\rightarrow\infty}\frac{\#\{p\in T_1:p\leq X\}}{\#\{v\in\Sigma_F:\N(v)\leq X\}}\\
    &=\lim_{X\rightarrow\infty}\frac{\#\{p\in T_1:p\leq X\}}{\#\{p:p\leq X\}} \\
    &=\mathrm{Density}(T_1)>0.
\end{align*}
\Cref{inert-in-K-theorem} then follows.

\section{Proof of \Cref{galois-rep-theorem}}\label{sec:harder-thm}

We will now give a proof of \Cref{galois-rep-theorem}. We will adopt all the notations in \Cref{galois-rep-theorem} unless stated otherwise. Recall that in this case, we have a rational prime $p$ that lies below a degree~$1$ prime of $F$ and splits into two distinct primes in $K$ of the same inertia degree.

\subsection{A proof assuming \Cref{density-of-S(c)}}
\label{incomplete-proof-of-galois-rep-theorem}
The proof of \Cref{galois-rep-theorem} follows a similar approach to that of   \Cref{inert-in-K-theorem}. We first prove a lemma showing that under the assumption on the rational prime $p$,  almost every prime $v$ of $F$ above $p$ is either an ordinary prime or belongs to a special set. 
In the case of \Cref{inert-in-K-theorem},
the special set is $\{v\in\Sigma_F:v\notin S\text{ and }a_v = 0\}$, which has density 0 by assumptions. For \Cref{galois-rep-theorem}, the special set is $S(c)$ (see \Cref{eq:S(c)} for its precise definition) or, more accurately, a finite union of such sets. 
It is much trickier to show that the density of $S(c)$ is $0$. Therefore,  we  first proceed with  the proof assuming that $S(c)$ has density $0$, and then, in \Cref{density-of-S(c)-section}, we discuss its density properly. 

\begin{lemma}
\label{p2-lemma}
Let $F$ and $K$ be two number fields and let $S$ be a finite set of primes of $F$. Let $(a_v)_{v\in\Sigma_F\setminus S}$ be a fixed sequence in $\cl{O}_K$ satisfying that
\begin{align*}
    |\iota(a_v)|_\C\leq 2\N(v)^{1/2}
\end{align*}
for all $v\in\Sigma_F$ with $v\notin S$ and all embeddings $\iota:K\rightarrow\C$. Let $v\in\Sigma_F$ with $v\notin S$ be a degree $1$ prime of $F$ and let $p$ be the rational prime lying below $v$. Suppose that $p$ splits into two distinct primes in~$K$ of the same inertia degree. Then, $v\in S^{\ord}$ or $\N_{K/\Q}(a_v)^2=c\N(v)^g$ for some $c\in\Z$ with $0\leq c\leq 2^{2g}$.
\end{lemma}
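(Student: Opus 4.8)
The plan is to exploit the splitting hypothesis on $p$ together with the Hasse--Weil bound, mimicking the strategy of \Cref{p1-lemma} but accounting for the fact that $p$ now splits into two primes $\lambda_1,\lambda_2$ of $K$ rather than being inert. Write $[K:\Q]=g$ and suppose $p\cl{O}_K=\lambda_1\lambda_2$ with $\N(\lambda_1)=\N(\lambda_2)=p^{g/2}$ (so $g$ is even). Since $v\notin S^{\ord}$, there is some $\lambda\in\set{\lambda_1,\lambda_2}$ with $\lambda\mid(a_v)$; because $\N(v)=p$ (as $v$ has degree $1$ above $p$), this is exactly a prime above $\N(v)$. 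The idea is to track the $\lambda_1$- and $\lambda_2$-adic valuations of the ideal $(a_v)$: write $e_i:=\ord_{\lambda_i}(a_v)$, so that $\N((a_v))=\N_{K/\Q}(a_v)$ is divisible by $p^{(g/2)(e_1+e_2)}$ up to the contribution of primes not above $p$.

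The key step is to bound $e_1+e_2$. From the Hasse--Weil/Ramanujan bound in hypothesis (3) we get, as in \Cref{p1-lemma}, that
\begin{align*}
\abs{\N_{K/\Q}(a_v)}=\prod_{\iota:K\hookrightarrow\C}\abs{\iota(a_v)}_\C\leq 2^g\N(v)^{g/2}=2^g p^{g/2}.
\end{align*}
Hence if $a_v\neq 0$ then $p^{(g/2)(e_1+e_2)}\mid \abs{\N_{K/\Q}(a_v)}\leq 2^g p^{g/2}$, which forces $(g/2)(e_1+e_2)\leq g/2+g\log_p 2$; for all but finitely many $p$ this gives $e_1+e_2\leq 1$, i.e. $(a_v)$ is divisible by at most one of $\lambda_1,\lambda_2$ to the first power. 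But we must also cover small $p$ and, more importantly, we cannot simply exclude $a_v=0$ here (unlike in \Cref{inert-in-K-theorem} there is no non-lacunarity assumption available at this point), so instead of concluding $a_v=0$ we must extract the stated quantized conclusion. The right bookkeeping is: let $c:=\N_{K/\Q}(a_v)^2\cdot\N(v)^{-g}=\N_{K/\Q}(a_v)^2 p^{-g}$. A priori this is a rational number; the claim is that it is in fact a non-negative integer bounded by $2^{2g}$.

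To see $c\in\Z_{\geq0}$: non-negativity and the upper bound $c\leq 2^{2g}$ are immediate from $\abs{\N_{K/\Q}(a_v)}\leq 2^g p^{g/2}$ and squaring. For integrality, observe that $\N_{K/\Q}(a_v)^2=\N((a_v))^2$, and from $v\notin S^{\ord}$ we have $\lambda\mid (a_v)$ for some $\lambda\mid p$; combined with the fact that $\sigma(a_v)$ and $a_v$ generate conjugate ideals, one checks that $p^{g}\mid \N_{K/\Q}(a_v)^2$. Concretely: $\N_{K/\Q}(a_v)=\pm\prod_{\sigma}\sigma(a_v)$, the ideal $(\N_{K/\Q}(a_v))=\N_{K/\Q}((a_v))$, and $\lambda_i^{(g/2)e_i}\,\|\,$ the $p$-part of this norm ideal means the $p$-adic valuation of $\N_{K/\Q}(a_v)$ is $(g/2)(e_1+e_2)$; this is a half-integer multiple of $g$ but after squaring becomes $g(e_1+e_2)$, a non-negative integer multiple of $g$, and since $v\notin S^{\ord}$ forces $e_1+e_2\geq1$ we even get $p^g\mid \N_{K/\Q}(a_v)^2$ — except when $a_v=0$, in which case $c=0$ trivially. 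The main obstacle, and the place needing the most care, is precisely this integrality/valuation argument when $g/2$ and the $e_i$ interact — one has to be careful that $\N(\lambda_i)=p^{g/2}$ with $g/2$ possibly odd, so that the $p$-adic valuation of $\N_{K/\Q}(a_v)$ need not be an integer multiple of $g$, and it is only after squaring (and using that both primes above $p$ have the same residue degree, which is where the "same inertia degree" hypothesis is essential) that the powers align. Once \Cref{p2-lemma} is in hand, the remainder of the proof of \Cref{galois-rep-theorem} will, as in \Cref{sec:easier-thm}, consist of showing that for each fixed $c$ the set $S(c)$ has density $0$ via the surjectivity hypothesis (4) and Chebotarev, and then concluding that the positive-density set of primes $v$ above such $p$ lies, up to a density-zero set, in $S^{\ord}$.
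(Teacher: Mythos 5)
Your proof is correct and follows essentially the same route as the paper: from $v\notin S^{\ord}$ you get a prime $\lambda\mid p$ of norm $p^{g/2}$ dividing $(a_v)$, so $p^{g/2}$ divides $\N_{K/\Q}(a_v)$, and combining this with the archimedean bound $|\N_{K/\Q}(a_v)|\leq 2^g p^{g/2}$ and squaring yields $\N_{K/\Q}(a_v)^2=c\,\N(v)^g$ with $0\leq c\leq 2^{2g}$. The detour about forcing $e_1+e_2\leq 1$ for large $p$ is unnecessary (as you yourself note), but the valuation bookkeeping you use for integrality is just a slightly more explicit version of the paper's one-line divisibility argument.
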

\begin{proof}
Suppose that $v\notin S^{\ord}$. Then, there exists a prime $\lambda$ of $K$ with $\lambda|\N(v)$ such that $\lambda|(a_v)$, so $\N(\lambda)|\N((a_v))$ in $\Z$. By the assumption on $a_v$, 
\begin{align*}
    \N((a_v))=\bigg|\prod_{\sigma:K\hookrightarrow\ov{\Q}}\sigma(a_v)\bigg|=\prod_{\iota:K\rightarrow\C}|\iota(a_v)|_\C\leq 2^g\N(v)^{g/2},
\end{align*}
where we write $[K:\Q]=g$. By the assumptions on $v$ and $p$, we have $\N(v)=p$ and $\N(\lambda)=p^{g/2}$ ($g$ is necessarily even in this case). Hence, $p^{g/2}|\N((a_v))$ with $\N((a_v))\leq 2^gp^{g/2}$, implying that $\N_{K/\Q}(a_v)^2=c\N(v)^g$ for some $c\in\Z$ with $0\leq c\leq 2^{2g}$.
\end{proof}

Now, we continue with the proof. For $c\in\Z$, let
\begin{align}\label{eq:S(c)}
    S(c):=\{v\in\Sigma_F:v\notin S\text{ and }\N_{K/\Q}(a_v)^2=c\N(v)^g\}.
\end{align}
Suppose for now that $S(c)$ has density $0$, which will be proved in \Cref{density-of-S(c)}. We will show under this assumption that \Cref{galois-rep-theorem} holds. Let $T_2$ be the set of rational primes $p$ such that
\begin{enumerate}[label=\rm{(\arabic*)}]
    \item $p$ lies below a degree $1$ prime of $F$;
    \item $p$ splits into two primes in $K$ of the same inertia degree.
     \item $p$ does not lie below any prime in $S$.
\end{enumerate}
As before, $T_2$ has positive density by \Cref{one-implies-positive-density} and the assumption on the existence of a rational prime satisfying both (1) and (2). Let
\begin{align*}
    S_2:=\{v\in\Sigma_F: \N(v)=p\text{ for some }p\in T_2\}.
\end{align*}
By \Cref{p2-lemma}, if $v\in S_2$, then $v\in S^{\ord}$ or $v\in S(c)$ for some $c\in\Z$ with $0\leq c\leq 2^{2g}$. As $S(c)$ and hence $\bigcup_{0\leq c\leq 2^{2g}}S(c)$ have density $0$, it follows that $S_2$ has the same density as $S_2\cap S^{\ord}\subseteq S^{\ord}$, so it suffices to prove that $S_2$ has positive density. Now, the same argument as before gives that
\begin{align*}
    \mathrm{Density}(S_2)=\lim_{X\rightarrow\infty}\frac{\#\{v\in S_2:\N(v)\leq X\}}{\#\{v\in\Sigma_F:\N(v)\leq X\}}\geq\lim_{X\rightarrow\infty}\frac{\#\{p\in T_2:p\leq X\}}{\#\{p:p\leq X\}}=\mathrm{Density}(T_2)>0.
\end{align*}
Thus, \Cref{galois-rep-theorem} follows from the argument above.

\subsection{Density of $S(c)$}
\label{density-of-S(c)-section}
The main goal of this subsection is to show that $S(c)$ has density $0$ for each $c\in \Z$ with $0\leq c\leq 2^{2g}$, so that \Cref{galois-rep-theorem} holds by \Cref{incomplete-proof-of-galois-rep-theorem}. To prove this, we need to use the surjective property of the family of Galois representations $(\rho_\ell)_{\ell\in T}$.

Fix a rational prime $\ell$ that splits completely in $K$ and fix an embedding $\iota_\ell:\ov{\Q}\hookrightarrow\ov{\Q_\ell}$. Write $[K':\Q]=g'$ and let $\sigma'_1,\ldots,\sigma'_{g'}:K'\hookrightarrow\ov{\Q}$ be all the embeddings of $K'$ into $\ov{\Q}$. As $\ell$ splits completely in $K$ and hence in $K'$, by \Cref{places-qbarembeddings}, $\{\lambda'_i:=\lambda'_{\sigma'_i}\mid 1\leq i\leq g'\}$ is an enumeration of all the places $\lambda'|\ell$ of $K'$. In particular, we may simply make the identification $K'_{\lambda'_i}=\widehat{\iota_\ell\sigma'_i(K')}=\Q_\ell\subseteq\ov{\Q_\ell}$ so that the natural map $K'\rightarrow K'_{\lambda'_i}$ is given by $\alpha\mapsto\iota_\ell\sigma'_i(\alpha)$. In this way, 
\begin{align*}
    \cl{O}_{K'}\otimes\Z_\ell&\simarrow\prod_{i=1}^g\cl{O}_{K'_{\lambda'_i}} \\
    \alpha\otimes\beta&\mapsto(\iota_\ell\sigma'_i(\alpha)\cdot\beta)_{i=1}^{g'}.\footnotemark
\end{align*}
\footnotetext{It is easy to check that this assignment is well-defined and can be $\Z$-linearly extended to an isomorphism.}

Write $[K:K']=d$. Since $\ell$ splits completely in $K$, each $\lambda'_i$ also splits completely in $K$. In particular, for every prime $\lambda|\lambda'_i$ of $K$, we have $\mathcal{O}_{K'_{\lambda'_i}} = \mathcal{O}_{K_\lambda}$. Let us now enumerate the primes $\lambda|\ell$ of $K$ by $(\lambda_{ij})_{i=1,j=1}^{g',\;d}$ such that $\prod_{j=1}^d\lambda_{ij}=\lambda'_i$ and let $\sigma_{ij}:K\hookrightarrow\ov{\Q}$ be the embedding corresponding to the prime $\lambda_{ij}$ by \Cref{places-qbarembeddings}. We also make the identification $K_{\lambda_{ij}}=\widehat{\iota_\ell\sigma_{ij}(K)}=\Q_\ell\subseteq\ov{\Q_\ell}$ so that similarly
\begin{align*}
    \cl{O}_K\otimes\Z_\ell&\simarrow\prod_{i=1}^{g'}\prod_{j=1}^d\cl{O}_{K_{\lambda_{ij}}} \\
    \alpha\otimes\beta&\mapsto(\iota_\ell\sigma_{ij}(\alpha)\cdot\beta)_{i=1,j=1}^{g',\;d}.
\end{align*}
Thus, the composition map
\begin{align*}
    \prod_{i=1}^{g'}\cl{O}_{K'_{\lambda'_i}}\simarrow\cl{O}_{K'}\otimes\Z_\ell\hookrightarrow\cl{O}_K\otimes\Z_\ell\simarrow\prod_{i=1}^{g'}\prod_{j=1}^d\cl{O}_{K_{\lambda_{ij}}}
\end{align*}
is given by
\begin{align*}
    \prod_{i=1}^{g'}\cl{O}_{K'_{\lambda'_i}}&\hookrightarrow\prod_{i=1}^{g'}\prod_{j=1}^d\cl{O}_{K_{\lambda_{ij}}} \\
    (a_i)_{1\leq i\leq g'}&\mapsto((a_i)_{1\leq j\leq d})_{1\leq i\leq g'}.
\end{align*}
In this way, we obtain an isomorphism
\begin{align}
\label{GL2(curly-otimes-zl)-isom-copies-of-GL2(zl)}
    \GL_2(\cl{O}_K\otimes\Z_\ell)\simarrow\prod_{i=1}^{g'}\prod_{j=1}^d\GL_2(\cl{O}_{K_{\lambda_{ij}}})\simeq\prod_{i=1}^{g'}\prod_{j=1}^d\GL_2(\Z_\ell),
\end{align}
and $\GL_2(\cl{O}_{K'}\otimes\Z_\ell)$ is identified with $\prod_{i=1}^{g'}\Delta_d(\GL_2(\Z_\ell))$ under this isomorphism, where $\Delta_d:\GL_2(\Z_\ell)\rightarrow\prod_{j=1}^d\GL_2(\Z_\ell)$ is the diagonal embedding.

Now, let $\ell\in T$ (which is assumed to split completely in $K$). Let $\rho'_\ell:\Gal(\ov{F}/F)\rightarrow\prod_{i=1}^{g'}\prod_{j=1}^d\GL_2(\Z_\ell)$ be the composition of $\rho_\ell$ and the isomorphism (\ref{GL2(curly-otimes-zl)-isom-copies-of-GL2(zl)}) and let $\ov{\rho'_\ell}:\Gal(\ov{F}/F)\rightarrow\prod_{i=1}^{g'}\prod_{j=1}^d\GL_2(\F_\ell)$ be the composition of $\rho'_\ell$ and the natural reduction map $\Z_\ell\rightarrow\F_\ell$. Recall that for all primes $\ell\in T$, 
\begin{align*}
    \rho_\ell(\Gal(\ov{F}/F'))=\{M\in\GL_2(\cl{O}_{K'}\otimes\Z_\ell) : \det M\in (\Z\otimes\Z_\ell)^\times (\simeq   \Z_\ell^{\times})\}. 
\end{align*}
From this, we obtain that
\begin{align*}
    \ov{\rho'_\ell}(\Gal(\ov{F}/F'))=A_\ell(g',d):=\big\{&(M_{ij})_{i=1,j=1}^{g',\;d}\in\prod_{i=1}^{g'}\prod_{j=1}^d\GL_2(\F_\ell): M_{ij}=M_{ij'}\text{ for all }1\leq j,j'\leq d,\\
    &\hspace{0.7cm}\text{ and }\det M_{ij}=\det M_{i'j'}\text{ for all }1\leq i,i'\leq g'\text{ and }1\leq j,j'\leq d\big\}.
\end{align*}

Now, since $[F':F]$ is finite, there exists $n\in\Z^+$ such that $\tau^n\in\Gal(\ov{F}/F')$ for all $\tau\in\Gal(\ov{F}/F)$.\footnote{By considering all the cosets $\{\tau^i\Gal(\ov{F}/F')\}_{i\in\Z}$, one can deduce that $\tau^i\in\Gal(\ov{F}/F')$ for some $1\leq i\leq [F':F]$, so $n$ can be chosen as $[F':F]!$.}
In particular, this means that
\begin{align*}
    \Im\ov{\rho'_\ell}\subseteq \wt{B}_\ell(g',d,n):=&\big\{M\in \prod_{i=1}^{g'}\prod_{j=1}^d\GL_2(\F_\ell): M^n\in A_\ell(g',d)\big\} \\
    =&\big\{(M_{ij})_{i=1,j=1}^{g',\;d}\in\prod_{i=1}^{g'}\prod_{j=1}^d\GL_2(\F_\ell): M_{ij}^n=M_{ij'}^n\text{ for all }1\leq j,j'\leq d,\\
    &\hspace{1cm}\text{ and }\det M_{ij}^n=\det M_{i'j'}^n\text{ for all }1\leq i,i'\leq g'\text{ and }1\leq j,j'\leq d\big\}.
\end{align*}
In fact, $\Im\ov{\rho'_\ell}$ lies in a smaller set when $\ell>n+1$. To prove this, we first need a lemma on the matrix group $\GL_2(\F_\ell)$. 

\begin{lemma}
\label{commutator-group}
Let $n\in\Z^+$ and let $\ell>n+1$ be a prime. Let $t\in\F_\ell^\times$ such that $t^n\neq 1$. 
Then, $M\in\GL_2(\F_\ell)$ commutes with both $\sm{1&0\\0&t^n}$ and $\sm{1&\ol{n}\\0&1}$ if and only if $M=\sm{\lambda&0\\0&\lambda}$ for some $\lambda\in\F_\ell^\times$, where $\ol{n}$ denotes $n\pmod{\ell}$.
\end{lemma}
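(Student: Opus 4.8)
The plan is to prove the lemma by a direct matrix computation, handling the two commutation conditions one after the other. The ``if'' direction is immediate: a scalar matrix $\sm{\lambda&0\\0&\lambda}$ lies in the center of $\GL_2(\F_\ell)$, hence commutes with everything (and $\lambda\in\F_\ell^\times$ exactly because the matrix is invertible). So the content is the ``only if'' direction, for which I would write $M=\sm{a&b\\c&d}$ and read off constraints.

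First I would use commutation with $D:=\sm{1&0\\0&t^n}$. Since $t\in\F_\ell^\times$ we have $t^n\neq 0$, and by hypothesis $t^n\neq 1$, so $D$ is a diagonal matrix with two \emph{distinct} nonzero diagonal entries; equivalently it is a regular semisimple element of $\GL_2(\F_\ell)$. Comparing the off-diagonal entries of $MD$ and $DM$ gives $b(t^n-1)=0$ and $c(t^n-1)=0$, and since $t^n-1\neq 0$ this forces $b=c=0$, i.e. $M=\sm{a&0\\0&d}$ is diagonal. (One can phrase this more conceptually: the centralizer of a regular semisimple diagonal element is the diagonal torus.)

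Next I would impose commutation of the diagonal matrix $M=\sm{a&0\\0&d}$ with the unipotent $U:=\sm{1&\ol n\\0&1}$. Comparing the $(1,2)$-entries of $MU$ and $UM$ yields $a\,\ol n=d\,\ol n$, i.e. $(a-d)\ol n=0$ in $\F_\ell$. This is the one place where the hypothesis $\ell>n+1$ is used: since $1\le n\le \ell-2<\ell$, we have $\ol n\neq 0$ in $\F_\ell$, so $\ol n$ is a unit and $a=d$. Hence $M=\sm{a&0\\0&a}$, which is the claimed form with $\lambda=a$, and $\lambda\neq 0$ since $M\in\GL_2(\F_\ell)$. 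There is no genuine obstacle here; the only points requiring a moment's care are that $\sm{1&0\\0&t^n}$ is actually non-scalar (which needs both $t^n\neq 0$ and $t^n\neq 1$) and that $\ol n$ is invertible in $\F_\ell$ (which is exactly what $\ell>n+1$, or even just $\ell>n$, provides).
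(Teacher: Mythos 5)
Your proposal is correct and follows essentially the same argument as the paper: write $M=\sm{a&b\\c&d}$, use commutation with the non-scalar diagonal matrix to force $b=c=0$, then use commutation with the unipotent matrix together with $\ol n\neq 0$ (guaranteed by $\ell>n+1$) to force $a=d$. The only difference is that you also spell out the trivial ``if'' direction, which the paper leaves implicit.
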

\begin{proof}
Let $M=\sm{a&b\\c&d}\in\GL_2(\F_\ell)$. Then, $M\sm{1&0\\0&t^n}=\sm{1&0\\0&t^n}M$ implies that
\begin{align*}
    \m{a&t^nb\\c&t^nd}=\m{a&b\\t^nc&t^nd}.
\end{align*}
Hence, $b(t^n-1)=c(t^n-1)=0$, implying that $b=c=0$ as $t^n\neq 1$. Now, $M\sm{1&\ol{n}\\0&1}=\sm{1&\ol{n}\\0&1}M$ implies that
\begin{align*}
    \m{a&a\ol{n}\\0&d}=\m{a&d\ol{n}\\0&d}.
\end{align*}
Hence, $a\ol{n}=d\ol{n}$, implying that $a=d$ as $\ol{n}\in\F_\ell^\times$.
\end{proof}

\begin{proposition}\label{residue-Galois-image-prop}
Let $n\in\Z^+$ be such that $\tau^n\in\Gal(\ov{F}/F')$ for all $\tau\in\Gal(\ov{F}/F)$ and let $\ell\in T$ be a prime with $\ell>n+1$. Then,
\begin{align*}
    \Im\ov{\rho'_\ell}\subseteq B_\ell(g',d,n):=&\big\{(M_{ij})_{i=1,j=1}^{g',\;d}\in\prod_{i=1}^{g'}\prod_{j=1}^d\GL_2(\F_\ell): M_{i1}^{-1}M_{ij}\in\mu_n(\F_\ell)I_2,\\
    &\hspace{0.5cm}\text{ and }\det M_{i1}^n=\det M_{i'1}^n\text{ for all }1\leq i,i'\leq d\text{ and }1\leq j\leq d\big\},
\end{align*}
where $\mu_n(\F_\ell):=\{\zeta\in\F_\ell:\zeta^n=1\}$.
\end{proposition}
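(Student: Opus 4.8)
The plan is to bootstrap from the inclusion $\Im\ov{\rho'_\ell}\subseteq\wt{B}_\ell(g',d,n)$ that has already been established, upgrading the condition ``$M_{ij}^n=M_{i1}^n$'' to the sharper ``$M_{i1}^{-1}M_{ij}\in\mu_n(\F_\ell)I_2$'' by playing the full surjectivity $\ov{\rho'_\ell}(\Gal(\ov F/F'))=A_\ell(g',d)$ against \Cref{commutator-group}. Note first that the determinant conditions defining $B_\ell(g',d,n)$ are already among those defining $\wt{B}_\ell(g',d,n)$ (specialize to the first index in each block), so the entire content to prove is the scalarity statement.

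First I would fix $\tau\in\Gal(\ov F/F)$, write $(M_{ij})_{i,j}:=\ov{\rho'_\ell}(\tau)$, and fix a block index $i$ and an index $j\in\{1,\dots,d\}$. The crucial step is to produce, for \emph{every} matrix $H\in\GL_2(\F_\ell)$, a relation between $M_{ij}$ and $M_{i1}$ twisted by $H$. For this, observe that the constant tuple $(H,\dots,H)$ lies in $A_\ell(g',d)$, so by surjectivity there is $\gamma\in\Gal(\ov F/F')$ with $\ov{\rho'_\ell}(\gamma)=(H,\dots,H)$. Since $\tau\gamma\in\Gal(\ov F/F)$, the defining property of $n$ gives $(\tau\gamma)^n\in\Gal(\ov F/F')$, hence $\ov{\rho'_\ell}((\tau\gamma)^n)\in A_\ell(g',d)$; comparing the $j$-th and first components of the $i$-th block gives $(M_{ij}H)^n=(M_{i1}H)^n$. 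As $H$ ranges over all of $\GL_2(\F_\ell)$, substituting $M_{i1}^{-1}H$ for $H$ and setting $R:=M_{ij}M_{i1}^{-1}$, I obtain
\[
(RH)^n=H^n\qquad\text{for all }H\in\GL_2(\F_\ell).
\]

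The remaining step is purely group-theoretic: I would show that any $R\in\GL_2(\F_\ell)$ satisfying this identity is a scalar matrix lying in $\mu_n(\F_\ell)I_2$, using $\ell>n+1$. Taking $H=I_2$ gives $R^n=I_2$. Taking $H=\sm{1&0\\0&t}$ for some $t\in\F_\ell^\times$ with $t^n\neq1$ (which exists, since the number of $n$-th roots of unity in $\F_\ell^\times$ is $\gcd(n,\ell-1)\le n<\ell-1$), the matrix $R\sm{1&0\\0&t}$ commutes with its own $n$-th power $\sm{1&0\\0&t^n}$, whose two distinct eigenvalues force $R\sm{1&0\\0&t}$, and hence $R$, to be diagonal; in particular $R$ commutes with $\sm{1&0\\0&t^n}$. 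Taking $H=\sm{1&1\\0&1}$, the matrix $R\sm{1&1\\0&1}$ commutes with its $n$-th power $\sm{1&\ol n\\0&1}$ (here $\ol n\neq0$ because $\ell\nmid n$), and since the centralizer of a nontrivial unipotent is the group of upper-triangular matrices with equal diagonal entries, $R\sm{1&1\\0&1}$, and hence $R$, commutes with $\sm{1&\ol n\\0&1}$. Now \Cref{commutator-group} applies and yields $R=\lambda I_2$; together with $R^n=I_2$ this gives $\lambda\in\mu_n(\F_\ell)$, and $R=M_{ij}M_{i1}^{-1}=\lambda I_2$ gives $M_{i1}^{-1}M_{ij}=\lambda I_2\in\mu_n(\F_\ell)I_2$, as desired.

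I expect the only delicate point to be the middle step: arranging exactly the right Galois-theoretic twist so that the problem collapses to the single identity $(RH)^n=H^n$. It rests on two facts, each essentially built into the hypotheses -- that every constant block-tuple is realized by $\Gal(\ov F/F')$ (this is the surjectivity statement $\ov{\rho'_\ell}(\Gal(\ov F/F'))=A_\ell(g',d)$), and that $n$-th powers of arbitrary elements of $\Gal(\ov F/F)$ land in $\Gal(\ov F/F')$ (this is how $n$ was chosen). Once that reduction is in place, the closing step is the short centralizer computation above, packaged by \Cref{commutator-group}.
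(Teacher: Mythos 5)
Your proof is correct, and it reaches the same endgame as the paper (the same two test matrices and \Cref{commutator-group}) but by a genuinely different intermediate mechanism. The paper conjugates: for $\alpha\in\Gal(\ov{F}/F')$ with $\ov{\rho'_\ell}(\alpha)=A\in A_\ell(g',d)$ arbitrary, it uses $\ov{\rho'_\ell}(\tau\alpha\tau^{-1})\in\wt{B}_\ell(g',d,n)$ to get, in one line, that $M_{i1}^{-1}M_{ij}$ commutes with $A_i^n$, and then feeds $A_i=\sm{1&0\\0&t}$ and $\sm{1&1\\0&1}$ into \Cref{commutator-group}; the scalar is an $n$-th root of unity because $\ov{\rho'_\ell}(\tau)$ itself lies in $\wt{B}_\ell$. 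You instead translate: realizing every constant tuple $(H,\dots,H)$ by some $\gamma\in\Gal(\ov{F}/F')$ and using $(\tau\gamma)^n\in\Gal(\ov{F}/F')$, you obtain the identity $(RH)^n=H^n$ for all $H$, with $R=M_{ij}M_{i1}^{-1}$, and then need the small extra centralizer computations to pass from ``$RH$ commutes with $H^n$'' to ``$R$ commutes with the two test matrices'' before invoking \Cref{commutator-group} (taking $H=I_2$ then gives $\lambda\in\mu_n(\F_\ell)$). Both routes rest on exactly the same inputs — the equality $\ov{\rho'_\ell}(\Gal(\ov{F}/F'))=A_\ell(g',d)$, the choice of $n$, and $\ell>n+1$ — and you correctly note that the determinant condition in $B_\ell(g',d,n)$ is inherited from the already-established containment in $\wt{B}_\ell(g',d,n)$; the paper's conjugation trick is slightly more economical, while your version isolates a clean self-contained matrix identity ($(RH)^n=H^n$ for all $H$ forces $R\in\mu_n(\F_\ell)I_2$) that could be reused independently of the Galois setup.
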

\begin{remark}
Under the notation in the proposition, there is a natural identification
\begin{align*}
    B_\ell(g',1,n)\times\mu_n(\F_\ell)^{g'(d-1)}=B_\ell(g',1,n)\times\prod_{i=1}^{g'}\prod_{j=2}^d\mu_n(\F_\ell)&\simarrow B_\ell(g',d,n) \\
    \big((M_i)_{i=1}^{g'},(\zeta_{ij})_{i=1,j=2}^{g',\;d}\big)&\longmapsto    (\zeta_{ij}M_i)_{i=1,j=1}^{g',\;d},
\end{align*}
where $\zeta_{i1}:=1$ for all $1\leq i\leq g'$.
\end{remark}
\begin{proof}
Fix $\tau\in\Gal(\ov{F}/F)$ and write $\ov{\rho'_\ell}(\tau)=M=(M_{ij})_{i=1,j=1}^{g',\;d}$. 
Let $A=(A_i)_{i=1}^{g'}\in A_\ell(g',d)=\ov{\rho'_\ell}(\Gal(\ov{F}/F'))$ and let $\alpha\in\Gal(\ov{F}/F')$ be such that $\ov{\rho'_\ell}(\alpha)=A$. Then,
\begin{align*}
    MAM^{-1}=\ov{\rho'_\ell}(\tau\alpha\tau^{-1})\in\wt{B}_\ell(g',d,n).
\end{align*}
In particular,
\begin{align*}
    M_{ij}A_i^nM_{ij}^{-1}=(M_{ij}A_iM_{ij}^{-1})^n=(M_{i1}A_iM_{i1}^{-1})^n=M_{i1}A_i^nM_{i1}^{-1}
\end{align*}
or equivalently
\begin{align*}
    (M_{i1}^{-1}M_{ij})A_i^n=A_i^n(M_{i1}^{-1}M_{ij})
\end{align*}
for all $1\leq i\leq g'$ and $1\leq j\leq d$.

Since $A\in A_\ell(g',d)$ is arbitrary and $\ell>n+1$, we may pick $A$ with $A_i$ being $\sm{1&0\\0&t}$ or $\sm{1&1\\0&1}$, where $t\in\F_\ell^\times$ with $t^n\neq1$, so that $A_i^n$ is $\sm{1&0\\0&t^n}$ or $\sm{1&n\\0&1}$. In particular, it follows from \Cref{commutator-group} that
\begin{align*}
    M_{i1}^{-1}M_{ij}=\m{\lambda&0\\0&\lambda}
\end{align*}
for some $\lambda\in\F_\ell^\times$. To see that $\lambda\in\mu_n(\F_\ell)$, note that $(M_{ij})_{i=1,j=1}^{g',\;d}=\ov{\rho'_\ell}(\tau)\in\wt{B}_\ell(g',d,n)$,
so
\begin{align*}
    M_{ij}^n=M_{i'j'}^n\text{ for all }1\leq i,i'\leq g'\text{ and }1\leq j,j'\leq d.
\end{align*}
In particular,
\begin{align*}
    \m{\lambda^n&0\\0&\lambda^n}=(M_{i1}^{-1}M_{ij})^n=M_{i1}^{-n}M_{ij}^n=I_2
\end{align*}
(note that $M_{ij}$ and $M_{i1}$ commute).
Hence, $\lambda\in\mu_n(\F_\ell)$.
\end{proof}

Let $v\notin S$ be a prime of $F$ and fix a Frobenius lift $\Frob_v\in\Gal(\ov{F}/F)$ at $v$. Let $\rho'_\ell(\Frob_v)=(A_{ij})_{i=1,j=1}^{g',\;d}$. Then, $\tr A_{ij}=\iota_\ell\sigma_{ij}(a_v)$ and $\det A_{ij}=\iota_\ell\sigma_{ij}\big(\epsilon(v)\N(v)\big)$ for all $1\leq i\leq g'$ and $1\leq j\leq d$.  Note that $\epsilon$ is of finite order, so $\N_{K/\Q}(\epsilon(v))=\pm 1$, and hence $\N_{K/\Q}(\epsilon(v))^g=1$ (as $g$ is necessarily even). In this way, the condition that $\N_{K/\Q}(a_v)^2=c\N(v)^g$ is equivalent to that 
\begin{align}
\label{conj-class-1}
    \bigg(\prod_{i=1}^{g'}\prod_{j=1}^d\tr A_{ij}\bigg)^2=c\cdot\prod_{i=1}^{g'}\prod_{j=1}^d\det A_{ij}.
\end{align}
As this condition stays the same under the reduction map $\Z_\ell\rightarrow\F_\ell$, we obtain that for $v\in S(c)$,
\begin{align*}
    \ov{\rho'_\ell}(\Frob_v)\in C_\ell(g',d,n,c)\cap\Im\ov{\rho'_\ell}=\bigg\{(M_{ij})_{i=1,j=1}^{g',\;d}\in\Im\ov{\rho'_\ell}: \bigg(\prod_{i=1}^{g'}\prod_{j=1}^d\tr M_{ij}\bigg)^2=\bar{c}\cdot\prod_{i=1}^{g'}\prod_{j=1}^d\det M_{ij}\bigg\},
\end{align*}
where
$\bar{c}$ denotes $c\pmod \ell$ and 
\begin{align*}
    C_\ell(g',d,n,c):=\bigg\{&(M_{ij})_{i=1,j=1}^{g',\;d}\in B_\ell(g',d,n):\bigg(\prod_{i=1}^{g'}\prod_{j=1}^d\tr M_{ij}\bigg)^2=\bar{c}\cdot\prod_{i=1}^{g'}\prod_{j=1}^d\det M_{ij}\bigg\}.
\end{align*}
Note that $C_\ell(g',d,n,c)\cap\Im\ov{\rho'_\ell}$ is invariant under conjugation by $\Im\ov{\rho'_\ell}$. By the Chebotarev density theorem,
\begin{align}
\label{density-of-S(c)-upper-bound}
    \mathrm{Density}(S(c))\leq\frac{|C_\ell(g',d,n,c)\cap\Im\ov{\rho'_\ell}|}{|\Im\ov{\rho'_\ell}|}\leq\frac{|C_\ell(g',d,n,c)|}{|A_\ell(g',d)|}.
\end{align}
We will now prove that the latter expression goes to $0$ as $\ell\to\infty$, from which it follows that $S(c)$ has density $0$.

\begin{lemma}\label{countinglemma}
Let $\ell$ be an odd prime, $t\in \F_\ell$, and   $d\in \F_\ell^{\times}$. Then we have
    $$\#\left\{N \in \GL_2(\Z/\ell\Z) : \det N = d, \tr N =t \right\} = \ell^2 + \ell \cdot \bigg(\frac{t^2-4d}{\ell}\bigg),$$
    where $\left(\frac{\cdot}{\ell}\right)$ is the Legendre symbol. 
\end{lemma}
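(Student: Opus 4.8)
I would prove this by a direct parametrization of the entries. Write $N=\m{a&b\\c&e}$ over $\F_\ell$; then $\tr N=t$ forces $e=t-a$, and $\det N=d$ becomes $bc=a(t-a)-d=:m(a)$. Since $d\in\F_\ell^\times$ is nonzero, every matrix produced this way is automatically invertible, so there is no further constraint to impose, and the count we want equals
\[
\sum_{a\in\F_\ell}\#\bigl\{(b,c)\in\F_\ell^2 : bc=m(a)\bigr\},\qquad m(a)=-a^2+ta-d.
\]
The inner count is elementary: if $m(a)\neq0$ it equals $\ell-1$ (pick $b\in\F_\ell^\times$ freely, then $c=m(a)b^{-1}$), and if $m(a)=0$ it equals $2\ell-1$ (either $b=0$ with $c$ arbitrary, or $b\neq0$ with $c=0$, overlapping only in $(0,0)$).

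\textbf{Counting the vanishing fibres.} It then remains to determine $r:=\#\{a\in\F_\ell : m(a)=0\}=\#\{a\in\F_\ell : a^2-ta+d=0\}$. Since $\ell$ is odd, $2$ is invertible, so completing the square shows this quadratic has $1+\left(\frac{t^2-4d}{\ell}\right)$ roots in $\F_\ell$ (two when $t^2-4d$ is a nonzero square, none when it is a non-square, one when $t^2-4d=0$). Writing $N_0$ for the quantity in the lemma and substituting,
\[
N_0=(\ell-r)(\ell-1)+r(2\ell-1)=\ell^2-\ell+r\ell=\ell^2+\ell\left(\frac{t^2-4d}{\ell}\right),
\]
which is the claimed formula.

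\textbf{On the difficulty.} There is no serious obstacle: the only points requiring care are the bookkeeping in the degenerate fibre $m(a)=0$ and the passage from the root count of $a^2-ta+d$ to $1+\left(\frac{t^2-4d}{\ell}\right)$ — which is precisely where the Legendre symbol enters — and these are routine because $\ell$ is odd and $d\neq0$. As a cross-check one could instead argue via conjugacy classes in $\GL_2(\F_\ell)$: the matrices with characteristic polynomial $X^2-tX+d$ form a single class when $X^2-tX+d$ is irreducible or split with distinct roots (centralizer a non-split resp.\ split torus, giving class size $\ell^2-\ell$ resp.\ $\ell^2+\ell$), and, when $t^2-4d=0$, consist of the lone scalar matrix together with one regular class of size $\ell^2-1$ (centralizer the unipotent-by-scalar subgroup), for a total of $\ell^2$; these three outcomes match $\left(\frac{t^2-4d}{\ell}\right)=-1,\,1,\,0$ respectively, but I would present the direct count above as the cleaner and more self-contained argument.
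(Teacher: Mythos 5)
Your argument is correct, and the computation checks out: the fibre count $(\ell-r)(\ell-1)+r(2\ell-1)=\ell^2-\ell+r\ell$ with $r=1+\left(\frac{t^2-4d}{\ell}\right)$ gives exactly the stated formula, the hypothesis $d\in\F_\ell^\times$ is correctly used to dispense with the invertibility constraint, and the oddness of $\ell$ is correctly used in passing from the root count of $a^2-ta+d$ to the Legendre symbol. The only difference from the paper is that the paper does not prove the lemma at all — it simply cites Lemma 2.7 of Cojocaru--Fouvry--Murty — so your direct parametrization supplies a short self-contained proof of what is there a black-box reference; your conjugacy-class cross-check (class sizes $\ell^2-\ell$, $\ell^2+\ell$, and $1+(\ell^2-1)=\ell^2$ in the irreducible, split-regular, and repeated-root cases) is also sound and matches the three values of the symbol, so either route would serve.
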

\begin{proof} See \cite[Lemma 2.7]{CoFoMu2005}.
\end{proof}

\begin{lemma}
\label{p2-case-asymptotics}
Let $c_0\in\Z$. Then, for all sufficiently large prime $\ell$, 
    we have 
    \[
\frac{|C_\ell(g',d,n,c_0)|}{|A_\ell(g',d)|} \ll_{g', d, n} \frac{1}{\ell}.
    \]
\end{lemma}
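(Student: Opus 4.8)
The plan is to estimate numerator and denominator separately. First I would compute $|A_\ell(g',d)|$. Recall that $A_\ell(g',d)$ consists of tuples $(M_{ij})$ with $M_{ij}$ depending only on $i$ (call it $A_i$) and with $\det A_i$ all equal. So $|A_\ell(g',d)|$ equals the number of tuples $(A_1,\ldots,A_{g'})\in\GL_2(\F_\ell)^{g'}$ with a common determinant. Summing over the common value $d_0\in\F_\ell^\times$ and using $|\{A\in\GL_2(\F_\ell):\det A=d_0\}|=|\SL_2(\F_\ell)|=\ell(\ell^2-1)$, we get $|A_\ell(g',d)|=(\ell-1)\cdot\big(\ell(\ell^2-1)\big)^{g'}\asymp \ell^{3g'+1}$. (The parameter $d$ plays no role here since the entries are forced to be diagonal-repeated.)

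Next I would bound $|C_\ell(g',d,n,c_0)|$ from above. Using the identification $B_\ell(g',d,n)\cong B_\ell(g',1,n)\times\mu_n(\F_\ell)^{g'(d-1)}$ from the remark after \Cref{residue-Galois-image-prop}, a tuple in $B_\ell(g',d,n)$ is determined by $(M_1,\ldots,M_{g'})\in\GL_2(\F_\ell)^{g'}$ with $\det M_i^n$ all equal, together with roots of unity $\zeta_{ij}\in\mu_n(\F_\ell)$; here $|\mu_n(\F_\ell)|\le n$, so the roots-of-unity factor contributes $O_n(1)$. The trace/determinant defining equation \eqref{conj-class-1}, after writing $M_{ij}=\zeta_{ij}M_i$, becomes a single polynomial relation among the $\tr M_i$, $\det M_i$ and the fixed $\zeta_{ij}$. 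The key point: for fixed $M_2,\ldots,M_{g'}$ and fixed $\zeta_{ij}$, the equation constrains $M_1$. Writing it as $(\prod\tr M_{1j})^2 = (\text{stuff not involving }M_1)$, we see it forces $\big(\tr M_1\big)^{2d}$ to equal a fixed constant $\gamma\in\F_\ell$ (after absorbing the $\zeta_{1j}$), hence $\tr M_1$ lies in a set of size $\le 2d$ (or all of $\F_\ell$ in the degenerate case $\gamma$ such that... — but $\gamma$ is a fixed nonzero-or-zero constant, and $x^{2d}=\gamma$ has $\le 2d$ solutions when $\gamma\ne0$ and one when $\gamma=0$). By \Cref{countinglemma}, for each admissible value of $\tr M_1$ and each admissible $\det M_1$ there are $\le \ell^2+\ell$ matrices $M_1$. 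So $M_1$ ranges over a set of size $O_{d}(\ell^2)$ rather than $O(\ell^3)$; meanwhile $M_2,\ldots,M_{g'}$ each range over $O(\ell^3)$ (they are only constrained by the common-$\det^n$ condition, which after fixing $\det M_1$ cuts down to $O(\ell^2)$ choices of $\det$ and hence $O(\ell^3)$... actually the common-determinant-power condition fixes $\det M_i$ up to $n$ choices once $\det M_1$ is fixed, giving $O(\ell^3)$ each). Combining: $|C_\ell(g',d,n,c_0)| \ll_{g',d,n} \ell^2 \cdot (\ell^3)^{g'-1}\cdot 1 = \ell^{3g'-1}$.

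Dividing, $\dfrac{|C_\ell(g',d,n,c_0)|}{|A_\ell(g',d)|} \ll_{g',d,n} \dfrac{\ell^{3g'-1}}{\ell^{3g'+1}} = \dfrac{1}{\ell^2} \ll \dfrac{1}{\ell}$, which is the desired bound (and in fact a bit stronger). I would organize the write-up as: (i) exact formula for $|A_\ell(g',d)|$; (ii) the structural reduction of $C_\ell$ via the roots-of-unity splitting, isolating the dependence on one block $M_1$; (iii) the bound $|\{M_1: \text{relation holds}\}| \ll_d \ell^2$ via \Cref{countinglemma} together with the observation that $\tr M_1$ is confined to $O(d)$ values; (iv) assembling the product bound and dividing.

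The main obstacle I anticipate is step (iii): handling the defining equation \eqref{conj-class-1} carefully when some traces vanish or when the constant $\gamma$ on the right-hand side is zero, and making sure the "confined to $O(d)$ values" claim is genuinely uniform — in particular that we never fall into a case where the equation is vacuous (always true) for a positive proportion of $M_1$. One must check that the locus $\{\tr M_1 = 0\}$ inside $\{\det M_1 = d_1\}$ has size $O(\ell^2)$, which is exactly what \Cref{countinglemma} gives (with $t=0$ the count is $\ell^2 + \ell(\tfrac{-4d_1}{\ell}) = O(\ell^2)$), so even the degenerate branch is fine. A secondary subtlety is bookkeeping the factor $d$ (the number of primes of $K$ above each prime of $K'$) versus $g'$ correctly in the exponents, but this is routine once the block structure is in place.
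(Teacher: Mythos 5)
Your overall strategy (split off the roots of unity via the remark after \Cref{residue-Galois-image-prop}, fix all blocks but one, and use \Cref{countinglemma} to count the remaining block) is the same as the paper's, but the execution has two genuine problems. First, a counting error: you use the constraint ``$\det M_1$ is confined to $\le n$ values'' twice. The bound ``$M_1$ ranges over $O_d(\ell^2)$ matrices'' is only valid \emph{after} $(M_2,\ldots,M_{g'})$ are fixed, and those tuples range over the set with common $n$-th power of determinant, which has size $\asymp\ell^{3(g'-1)+1}$, not $(\ell^3)^{g'-1}$: the common determinant value is a free parameter worth a factor $\asymp\ell$. (Equivalently, in your ordering, $\det M_1$ is \emph{not} confined to $O(1)$ values.) So the correct generic count is $\ll\ell^{3g'}$, not $\ell^{3g'-1}$, and the ratio is $\ll 1/\ell$, not $1/\ell^2$. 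Your stronger claim is actually false: for $g'=d=n=1$, $c_0=1$, the set $\{M:(\tr M)^2=\det M\}$ has size $\asymp\ell^3=\ell^{3g'}$, so the ratio is genuinely of order $1/\ell$. This does not sink the lemma (which only needs $1/\ell$), but the bookkeeping must be redone in the nested order: first the $(g'-1)$-tuple, then $M_1$.

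Second, and more seriously, the degenerate branch is not handled. When $\bar c_0=0$ (in particular $c_0=0$, which is exactly the case the paper needs for condition (P1)) and some $\tr M_i=0$ with $i\ge 2$, equation \eqref{conj-class-1} reduces to $0=0$ and places \emph{no} constraint on $M_1$, which then runs over $\approx n\ell^3$ matrices, not $O_d(\ell^2)$. Your attempted fix --- that $\{\tr M_1=0\}\cap\{\det M_1=d_1\}$ has size $O(\ell^2)$ --- addresses the different degeneracy $\gamma=0$ forcing $\tr M_1=0$, not the vacuous equation. To recover the $\ell^{3g'}$ bound in this branch you must show that the tuples $(M_2,\ldots,M_{g'})$ with a vanishing trace are themselves rarer by a factor $\ell$ (so that the lost constraint on $M_1$ is compensated); the paper does this by induction on $g'$, bounding $|D_\ell(g'-1,d,n,0)|\ll\ell^{3(g'-1)}$ and treating the two subcases ($T=0$ versus $T\neq 0$) separately. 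Without that extra step your argument, as written, does not yield the stated bound for $c_0=0$. Your exact evaluation $|A_\ell(g',d)|=(\ell-1)\bigl(\ell(\ell^2-1)\bigr)^{g'}$ is fine and is a legitimate replacement for the paper's citation.
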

\begin{proof}
First, we give an asymptotic of $|A_\ell(g',d)|$.
Observe that 
\[
|A_\ell(g',d)|=|A_\ell(g')|,
\]
where for any $g\geq 1$,
\[
A_\ell(g):=\bigg\{(N_i)_{1\leq i\leq g} \in \prod_{1\leq i\leq g}\GL_2(\F_\ell):  \det N_i= \det N_{i'} \text{ for all $1\leq i, i'\leq g$} \bigg\}.
\]
It has been computed in \cite[Lemma 12]{CoWa2023} that 
$
|A_\ell(g')| \asymp \ell^{3g'+1}.
$ Hence, 
$|A_\ell(g', d)|\asymp \ell^{3g'+1}.
$

Next, we give an upper bound of $|C_\ell(g',d,n,c_0)|$.  By \Cref{residue-Galois-image-prop}, we observe that  
\begin{equation}\label{matrix-size}
|C_\ell(g',d,n,c_0)|\leq n^{g'd}\sum_{c\in  \mu_n(\F_\ell)}|D_\ell(g', d, n,c\thin\ol{c_0})|,
\end{equation}
where for any $c\in \F_\ell$, we define 
\begin{align*}
D_\ell(g', d, n,c) & :=\bigg\{(N_i)_{1\leq i\leq g'}  \in \prod_{1\leq i\leq g'}\GL_2(\F_\ell):   (\det N_i)^n= (\det N_{i'})^n \text{ for all $1\leq i, i'\leq g'$}\\
&  \hspace{3cm}  \text{and } \bigg(\prod_{1\leq i\leq g'} \tr N_i \bigg)^{2d}=c\cdot \prod_{1\leq i\leq g'} \left(\det N_i\right)^d \bigg\}.
\end{align*}
To see this, for any $(N_{i, j})_{i=1,j=1}^{g',\;d} \in C_\ell(g',d,n,c_0)$, we have $N_{i, j}=\zeta_{i, j} N_{i, 1}$ with $\zeta_{i, j}\in \mu_n(\F_\ell)$ for all $1\leq i\leq g'$ and $1\leq j'\leq d$. 
By denoting $N_i:=N_{i, 1}$, we see that there  exist $c_1, c_2\in \mu_n(\F_\ell)$ such that
\[
\bigg(\prod_{1\leq i\leq g'}\prod_{1\leq j\leq d} \tr N_{i, j} \bigg)^{2}=c_1 \bigg(\prod_{1\leq i\leq g'} \tr N_i \bigg)^{2d} \; \text{ and } \; \prod_{1\leq i\leq g'}\prod_{1\leq j\leq d} \det N_{i, j} =c_2 \bigg(\prod_{1\leq i\leq g'} \det N_i \bigg)^{d}.
\]
Finally, the factor $n^{g'd}$ in \Cref{matrix-size} arises from the possible choices of $\zeta_{i, j}$. Hence, it suffices to estimate $|D_\ell(g',d, n,c\thin\ol{c_0})|$ for $c\in \mu_n(\F_\ell)$.   
Observe that if $g'=1$, then by \Cref{countinglemma},
\begin{align*}
|D_\ell(1, d, n,c\thin\ol{c_0})| &=
\#\big\{N\in \GL_2(\F_\ell): (\tr N)^{2d}=c\thin\ol{c_0} \cdot (\det N)^d
\big\}\\
&\leq \sum_{D\in \F_\ell^{\times}} \#\{N\in \GL_2(\F_\ell): (\tr N)^{2d}=c\thin\ol{c_0} D^d, \det N=D\}\\
& \ll (\ell-1)\cdot  (2d) \ell^2\ll_d \ell^3 (=\ell^{3g'}).
\end{align*}
For $g'\geq 2$,  we will compare the size of $D_\ell(g', d, n,c\thin\ol{c_0})$ with the following matrix group
\[
A_{\ell, n}(g'):=\bigg\{(N_i)_{1\leq i\leq g'} \in \prod_{1\leq i\leq g'}\GL_2(\F_\ell):  (\det N_i)^n= (\det N_{i'})^n \text{ for all $1\leq i, i'\leq g'$} \bigg\}.
\]
Since for each $1\leq i, i'\leq g'$, $(\det N_i)^n= (\det N_{i'})^n$ implies $\det N_i\in \mu_n(\F_\ell) \det N_i'$, we obtain 
\[
|A_{\ell, n}(g')|\leq n^{g'} |A_\ell(g')|\asymp_{n, g'} \ell^{3g'+1}.
\]
Therefore, if $\ol{c_0}\neq 0$, then
\begin{align*}
|D_\ell(g', d, n,c\thin\ol{c_0})| & \leq \sum_{(N_i)_{1\leq i\leq g'-1} \in A_{\ell, n}(g'-1)}  \#\bigg\{N_{g'} \in \GL_2(\F_\ell): \det N_1^n=\dots=\det N_{g'}^n, \\
& \hspace{5cm} 
\prod_{1\leq i\leq g'-1}(\tr N_i)^{2d} \cdot (\tr N_{g'})^{2d} =
  c\thin\ol{c_0} \cdot(\det N_{g'})^d
\bigg\}\\
& =  \sum_{(N_i)_{1\leq i\leq g'-1} \in A_{\ell, n}(g'-1)}  \#\bigg\{N_{g'} \in \GL_2(\F_\ell): \det N_{g'}^n=D, \; T^{n} \cdot (\tr N_{g'})^{2dn} =
  c\thin\ol{c_0} D^{d}, \\
& \hspace{5.5cm}  
 \text{where $D:=\det N_1^n$,  $T:=\prod_{1\leq i\leq g'-1}(\tr N_i)^{2d} (\neq 0)$}
\bigg\}\\
& \ll |A_{\ell, n} (g'-1)|\cdot (2dn^2) \ell^2 \\
& \ll_{d, n} \ell^{3g'}.
\end{align*}
If $\ol{c_0}=0$, then 
we get 
\begin{align*}
|D_\ell(g', d, n, 0)| & \leq \sum_{(N_i)_{1\leq i\leq g'-1} \in D_\ell(g'-1, d, n, 0)}  \#\bigg\{N_{g'} \in \GL_2(\F_\ell): \det N_1^n=\dots=\det N_{g'}^n\bigg\} \\
& \hspace{1cm} + \sum_{(N_i)_{1\leq i\leq g'-1} \in A_{\ell, n}(g'-1)}  \#\bigg\{N_{g'} \in \GL_2(\F_\ell): \det N_{g'}^n=D, \; T \cdot (\tr N_{g'})^{2d} =0,\\
& \hspace{5.6cm} \text{where $D:=\det N_1^n, \; T:=\prod_{1\leq i\leq g'-1}(\tr N_i)^{2d}(\neq 0)$} \bigg\}\\
& \ll |D_\ell(g'-1, d, n, 0)|\cdot n\ell^3 + |A_{\ell, n}(g'-1)|\cdot n\ell^2 \\
& \ll_{d, n} \ell^{3g'},
\end{align*}
where in the last line we use the induction hypothesis $|D_\ell(g'-1, d, n, 0)| \ll \ell^{3(g'-1)}$ on $g'-1$.
In all,  for all sufficiently large $\ell$, we have
\[
\frac{|C_\ell(g', d, n,cc_0)|}{|A_{\ell}(g', d)|}\ll_{g', d, n} \frac{\ell^{3g'}}{\ell^{3g'+1}} =\frac{1}{\ell}.
\]
\end{proof}

\begin{corollary}
\label{density-of-S(c)}
Under the assumptions on the family of Galois representations in \Cref{galois-rep-theorem}, the set $S(c)$ has density $0$.
\end{corollary}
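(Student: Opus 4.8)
The plan is to simply combine the Chebotarev upper bound \eqref{density-of-S(c)-upper-bound} with the matrix count of \Cref{p2-case-asymptotics}, and then let $\ell$ run to infinity through the infinite set $T$.

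First, I would fix $c\in\Z$ and recall the two facts assembled just before \Cref{p2-case-asymptotics}: for every prime $\ell\in T$ with $\ell>n+1$ (and infinitely many such $\ell$ exist, since $T$ is infinite), the subset $C_\ell(g',d,n,c)\cap\Im\ov{\rho'_\ell}$ is stable under conjugation by $\Im\ov{\rho'_\ell}$ and contains $\ov{\rho'_\ell}(\Frob_v)$ for every $v\in S(c)$ with $v\notin S$, the latter because such $v$ satisfies \eqref{conj-class-1} and this relation persists under reduction modulo $\ell$. Applying the Chebotarev density theorem, and then bounding $|\Im\ov{\rho'_\ell}|$ from below by $|A_\ell(g',d)|$ via $A_\ell(g',d)=\ov{\rho'_\ell}(\Gal(\ov{F}/F'))\subseteq\Im\ov{\rho'_\ell}$, gives precisely
\[
\mathrm{Density}(S(c))\leq\frac{|C_\ell(g',d,n,c)\cap\Im\ov{\rho'_\ell}|}{|\Im\ov{\rho'_\ell}|}\leq\frac{|C_\ell(g',d,n,c)|}{|A_\ell(g',d)|}.
\]

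Next, I would apply \Cref{p2-case-asymptotics} with $c_0=c$: for all sufficiently large primes $\ell$ one has $|C_\ell(g',d,n,c)|/|A_\ell(g',d)|\ll_{g',d,n}1/\ell$. Combining the two displays, for every sufficiently large $\ell\in T$ we obtain $\mathrm{Density}(S(c))\ll_{g',d,n}1/\ell$. Since the left-hand side is independent of $\ell$ and $T$ is infinite, letting $\ell\to\infty$ along $T$ forces $\mathrm{Density}(S(c))=0$. This in turn completes the proof of \Cref{galois-rep-theorem} via the argument of \Cref{incomplete-proof-of-galois-rep-theorem}.

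I do not expect a genuine obstacle at this last step: all the substantive work is already in hand, namely the image description of \Cref{residue-Galois-image-prop} and the counting estimates of \Cref{p2-case-asymptotics}. The only minor points worth recording are that the Chebotarev theorem does furnish an upper bound for whichever density (natural or Dirichlet) one uses in the statement, and that $T$, being infinite, contains primes $\ell>n+1$ together with all the ``sufficiently large'' primes required by \Cref{p2-case-asymptotics}; both are immediate.
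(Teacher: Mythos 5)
Your proposal is correct and follows essentially the same route as the paper: the paper's proof of \Cref{density-of-S(c)} is exactly the combination of the Chebotarev bound \eqref{density-of-S(c)-upper-bound} with \Cref{p2-case-asymptotics}, letting $\ell\to\infty$ along $T$. The auxiliary points you record (conjugation-invariance of $C_\ell(g',d,n,c)\cap\Im\ov{\rho'_\ell}$, the lower bound $|\Im\ov{\rho'_\ell}|\geq|A_\ell(g',d)|$, and the availability of arbitrarily large $\ell\in T$) are precisely the ingredients the paper assembles in \Cref{density-of-S(c)-section}.
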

\begin{proof}
The proof  follows easily from \Cref{density-of-S(c)-upper-bound} and \Cref{p2-case-asymptotics}.
\end{proof}

\section{Applications of \Cref{galois-rep-theorem} and \Cref{inert-in-K-theorem}}\label{sec:application}

\subsection{Applications to abelian varieties}
\label{sec:application-abelian-variety}

We now provide the proof of \Cref{ablian-var-theorem-2}, assuming \Cref{modular-form-theorem} (whose proof will be given in \Cref{sec:application-modular-form}),  the proof of \Cref{ablian-var-theorem-3} using \Cref{galois-rep-theorem} and \Cref{inert-in-K-theorem}, and the proof of \Cref{abelian-var-quadratic-theorem} using \Cref{ablian-var-theorem-3}. We adopt all the notations from \Cref{ablian-var-theorem-2},  \Cref{abelian-var-quadratic-theorem}, and \Cref{ablian-var-theorem-3}, respectively. 

\begin{proof}[Proof of \Cref{ablian-var-theorem-2}]
By the assumptions, $A$ is a $\GL_2$-type abelian variety. By the proof of \cite[Theorem~2.1]{Ri2004}, 
%(see also \cite[Proposition~1.5]{Wu2020}) 
$A$ is isogenous over $\Q$ to $B^r$ for some simple \mbox{$\GL_2$-type} abelian variety $B$ over $\Q$. Moreover, by \cite[Theorem~2.1]{Ri2004} and its proof, $K':=\End_\Q(B)\otimes\Q$ is a number field of degree equal to $\dim B$ with $K'\hookrightarrow K\hookrightarrow\End_\Q(A)\otimes\Q$. Since $[K:\Q]=q$ (resp.~$2q$), $[K':\Q]=1$ or $q$ (resp.~$1$, $2$, $q$, or $2q$). If $[K':\Q]=1$, then $B/\Q$ is an elliptic curve. In particular, the set of ordinary primes of $B$ has positive density, and so does the set of ordinary primes of $A$.

Suppose that $[K':\Q]\neq 1$. By \cite[Theorem~4.4]{Ri2004}, under Serre's modularity conjecture, $B$ is isogenous over $\Q$ to abelian varieties of the form $A_f$ associated to some newform $f$ of weight $2$ under the Eichler--Shimura theory. As Serre's modularity conjecture is proved in \cite{KhWi2009-I,KhWi2009-II}, we can thus complete the proof by invoking \Cref{modular-form-theorem}, \Cref{f-ordinary-iff-Af-ordinary}, and the isomorphism $K'=\End_{\Q}(A_f)\otimes\Q\simeq K_f$.
\end{proof}

\begin{proof}[Proof of \Cref{ablian-var-theorem-3}]

Recall that $A$ is absolutely simple, $\End_F(A)\otimes\Q$ contains a number field $K$ of degree $g$, and $A/\ol{F}$ is of the first kind. From these, it is easy to check that $A$ is of $\GL_2(K)$-type and satisfies the assumptions in \Cref{degree-2-char-poly} and \Cref{Galois-images}. In particular, by the discussions in 
 \Cref{section-rep-abelian}, the family of $\ell$-adic Galois representations $\{\rho_{A,\ell}:\Gal(\ol{F}/F)\rightarrow\GL_2(K\otimes\Q_\ell)\}_{\ell\in\Sigma_\Q}$ satisfies that
\begin{enumerate}
    \item each $\rho_{A,\ell}$ is unramified outside $\{v \in \Sigma_F: v|N_A \ell\}$;
    \item for every prime $v\nmid N_A\ell$ of $F$, $\det(XI-\rho_{A,\ell}(\Frob_v))=X^2-(a_v(A)\otimes 1)X+(\epsilon(v)\N(v)\otimes1)$, which is independent of $\ell$ (where $\epsilon:\Gal(\ol{F}/F)\rightarrow K^\times$ is some finite order character independent of $\ell$);
    \item $|\iota(a_v(A))|_\C\leq 2\sqrt{\N(v)}$ for all primes $v\nmid N_A$ of $F$ and all embeddings $\iota:K \hookrightarrow\C$;
    \item there exist two finite extensions of fields $F'/F$ and $K/K'$ such that  for $\ell$ sufficiently large, we have, up to conjugation, that
\[
\rho_{A, \ell}(\Gal(\ol{F'}/F'))=
\{M\in\GL_2(\cl{O}_{K'}\otimes\Z_\ell) : \det M\in \Z_\ell^{\times}\}.
\]
\end{enumerate}

Therefore, if we are in the case of (P2), then the result readily follows from \Cref{galois-rep-theorem} (with $T$ being the set of rational primes that split completely in $K$) and \Cref{ordinary-lambda-trace}. In the case of (P1), we need to show that the set $\{v\in\Sigma_F:v\nmid N_A\text{ and }a_v(A)=0\}$ has density~$0$, which simply follows from \Cref{density-of-S(c)} when $c=0$. The case of (P1) then follows from \Cref{inert-in-K-theorem} and \Cref{ordinary-lambda-trace}.
\end{proof}

\begin{proof}[Proof of \Cref{abelian-var-quadratic-theorem}]
By the assumptions, $A$ is a $\GL_2$-type abelian variety. Since the CM case is covered by e.g.~\cite[Remark 12]{Fi2024} and the case when $q=2$ is covered by \cite{Sawin2016}, we will thus assume that $A$ is not a CM abelian variety and $A$ is of dimension $q\geq 3$.

Let $F'/F$ be a finite extension such that $\End_{F'}(A)\otimes\Q=\End_{\ol{F}}(A)\otimes\Q$. Since $\dim A=q$ is a prime and $A$ is not a CM abelian variety, if $A/F'$ is not simple, then by \cite[pp. 908--909 and Proposition 1.5]{Wu2020}, it must be isogenous over $F'$ to either $B^q$ for some non-CM elliptic curve $B$ over~$F'$ or $B_1^{r_1}\times B_2^{r_2}$ for some CM abelian varieties $B_1,B_2$ over $F'$ with $r_1\dim B_1=r_2\dim B_2=q/2$. As $q$~is an odd prime, the latter case is not possible. In the former case, the set of ordinary primes of $B$ over $F'$ has density~$1$, so the set of ordinary primes of $A$ over $F$ has density at least $\frac{1}{[F':F]}$.

Now, suppose that $A/F'$ is simple. Note that $q$ is an odd prime, $A$ is not a CM abelian variety, and $K\hookrightarrow\End_{F'}(A)\otimes\Q$. By \cite[Proposition~1.7]{Wu2020}, we must have that $K=\End_{F'}(A)\otimes\Q$ is a totally real field. In particular, 
\begin{align*}
    K=\End_{F'}(A)\otimes\Q=\End_{\ol{F}}(A)\otimes\Q.
\end{align*}
This implies that $A/\ol{F}$ must be simple as $\End_{\ol{F}}(A)\otimes\Q$ is a field. In this way, $A$ is absolutely simple, $\End_F(A)\otimes\Q$ contains a number field $K$ of degree $q=\dim A$, and $A/\ol{F}$ is not of type IV (in fact,  $A$ is of type I), so $A$ satisfies the assumptions in \Cref{ablian-var-theorem-3}. As $F$ is a quadratic field and $K$ is a number field whose degree is an odd prime, it follows from \Cref{quadratic-prime-splitting} that the condition (P1) in \Cref{ablian-var-theorem-3} is satisfied. Hence, the set of ordinary primes of $A$ over $F$ has positive density by \Cref{ablian-var-theorem-3}.
\end{proof}

\subsection{Application to modular forms}
\label{sec:application-modular-form}

We will now prove \Cref{modular-form-theorem} and we adopt all the notations from \Cref{modular-form-theorem} and \Cref{galois-rep-of-modular-form-section}.

First, note that if $f$ is CM, then the set of ordinary primes of $f$ has positive density by \Cref{ordinary-primes-of-cm-modular-forms}. We will thus assume that $f$ is non-CM from now on. 
By the discussions in 
 \Cref{galois-rep-of-modular-form-section}, the family of $\ell$-adic Galois representations $\{\rho_{f,\ell}:\Gal(\ol{\Q}/\Q)\rightarrow\GL_2(K_f\otimes\Q_\ell)\}_{\ell\in\Sigma_\Q}$ satisfies that
\begin{enumerate}[label=(\arabic*)]
    \item each $\rho_{f,\ell}$ is unramified at rational primes $p$ with $p\nmid N\ell$;
    \item for every rational prime $p\nmid N\ell$, $\det(XI-\rho_{f,\ell}(\Frob_p))=X^2-(a_p(f)\otimes 1)X+(\epsilon(p)p\otimes1)$, which is independent of $\ell$;
    \item $|\iota(a_p(f))|_\C\leq 2\sqrt{p}$ for all $p\nmid N$ and all embeddings $\iota:K_f\hookrightarrow\C$;
    \item the set $\{p\nmid N:a_p(f)=0\}$ of rational primes has density $0$;
    \item there exist two finite extensions of fields $F'_f/\Q$ and $K_f/K'_f$ such that for $\ell$ sufficiently large, we have, up to conjugation, that
    \begin{align*}
        \rho_{f,\ell}(\Gal(\ov{\Q}/F'_f))=\{M\in\GL_2(\cl{O}_{K'_f}\otimes\Z_\ell) : \det M\in\Z_\ell^{\times}\}.
    \end{align*}
\end{enumerate}
Here the only property that is not recalled in \Cref{galois-rep-of-modular-form-section} is (4), which follows from \cite[Section~7.2]{Se1981}.

From these properties, we can already deduce the second part of \Cref{modular-form-theorem} by applying \Cref{galois-rep-theorem} (with $T$ being the set of rational primes that split completely in $K_f$) or \Cref{inert-in-K-theorem}. 
The first part of \Cref{modular-form-theorem}, i.e., when $[K_f:\Q]=q$ or $2q$ for some prime $q$, now follows from \Cref{field-of-degree-q-or-2q} and the second part.
\\

\subsection*{Acknowledgments} 
We wish to thank Francesc Fit\'{e} for answering our questions and providing  helpful comments on Galois representations of $\mathrm{GL}_2$-type abelian varieties. We would like to thank  Rachel Pries for the stimulating conversations. We also wish to thank the Max Planck Institute for Mathematics in Bonn for its support and inspiring atmosphere for collaborations. Lastly, we are grateful for the many helpful comments and suggestions the referees provided.

\subsection*{Data availability}
There is no data used in this article.

\subsection*{Conflict of interest}
The authors state that there is no conflict of interest.

\begin{appendix}

\section{Ordinary primes of CM modular forms of weight $2$}

In this appendix, we will prove that CM modular forms of weight $2$ have positive density ordinary primes. In fact, we will prove a result purely for certain Galois representations, from which the result for CM modular forms of weight $2$ follows. The result should already be known to experts, and the approach we take here is essentially the same as \cite[Remark~12]{Fi2024}.

\begin{proposition}
\label{potent-split-implies-pos-dens-ordinary}
Let $F$ and $K$ be two number fields, let $\lambda$ be a fixed prime of $K$, and let $\epsilon$ be a Hecke character of $F$ of finite order with values in $K^\times$. Let $\rho:\Gal(\ov{F}/F)\rightarrow\GL_2(K_\lambda)$ be a continuous Galois representation such that for all primes $v$ of $F$ outside a finite set $S$ of primes of~$F$, $\rho$ is unramified at $v$ and
\begin{align*}
    \det(XI-\rho(\Frob_v))=X^2-\tr\rho(\Frob_v)X+\epsilon(v)\N(v)\in\cl{O}_K[X].
\end{align*}
Suppose that there exists a finite extension $F'/F$ such that for all but finitely many primes $v'$ of $F'$,  $\rho|_{\Gal({\ov{F}/F'})}$ is unramified at $v'$ and $\det(XI-\rho|_{\Gal({\ov{F}/F'})}(\Frob_{v'}))$ splits into linear factors over $K$. Then,
\begin{align*}
    S^{\ord}:=\{v\in\Sigma_F:v\notin S\text{ and }\mu\nmid(\tr\rho(\Frob_v))\text{ for all }\mu\in\Sigma_K\text{ with }\mu|\N(v)\}
\end{align*}
has density greater than or equal to $\frac{1}{[F':F]}$.
\end{proposition}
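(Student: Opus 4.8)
The plan is to exhibit an explicit subset of $S^{\ord}$ of density at least $\tfrac{1}{[F':F]}$, consisting of ``cheap'' ordinary primes for which only the potential-splitting hypothesis (and not the structure of the image of $\rho$) is needed. Concretely, I would let $\mathcal{V}$ be the set of primes $v$ of $F$ such that: (a) $v\notin S$; (b) $v$ has degree $1$, so that $\N(v)=p$ is the rational prime below $v$; (c) $p$ is unramified in $K$; (d) there is a prime $v'\mid v$ of $F'$ with residue degree $f(v'/v)=1$; and (e) $v$ lies below none of the finitely many primes $v'$ of $F'$ excluded in the hypothesis. Conditions (a), (c), (e) remove only finitely many primes, and (b) removes a set of density $0$ (a prime of $F$ of degree $\ge 2$ has norm $\ge p^2$, so these have density $0$ by the prime ideal theorem); thus $\mathrm{Density}(\mathcal{V})$ equals the density of the set of $v$ (unramified in the Galois closure of $F'/F$) satisfying (d).

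First I would check $\mathcal{V}\subseteq S^{\ord}$. Fix $v\in\mathcal{V}$ and $v'\mid v$ with $f(v'/v)=1$; then a Frobenius at $v'$ maps to $\Frob_v$ under $\Gal(\ov F/F')\hookrightarrow\Gal(\ov F/F)$, so $\rho|_{\Gal(\ov F/F')}(\Frob_{v'})$ is conjugate to $\rho(\Frob_v)$, and by (e) and the hypothesis its characteristic polynomial
\[
X^2-\tr\rho(\Frob_v)\,X+\epsilon(v)\N(v)\in\mathcal{O}_K[X]
\]
factors as $(X-\alpha)(X-\beta)$ with $\alpha,\beta\in K$; being roots of a monic polynomial over $\mathcal{O}_K$ they lie in $\mathcal{O}_K$. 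Since $\epsilon$ has finite order, $\epsilon(v)$ is a root of unity, hence a unit, so for every prime $\mu\mid p$ of $K$ one has $v_\mu(\alpha)+v_\mu(\beta)=v_\mu(\epsilon(v)\N(v))=v_\mu(p)=1$, using (b) and (c). Hence $\{v_\mu(\alpha),v_\mu(\beta)\}=\{0,1\}$, and the ultrametric inequality gives $v_\mu(\tr\rho(\Frob_v))=v_\mu(\alpha+\beta)=0$. As the primes $\mu$ of $K$ dividing $\N(v)$ are exactly those dividing $p$, this gives $\mu\nmid(\tr\rho(\Frob_v))$ for all of them, i.e.\ $v\in S^{\ord}$.

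It then remains to bound $\mathrm{Density}(\mathcal{V})$ from below. Let $L/F$ be the Galois closure of $F'/F$, $G=\Gal(L/F)$, $H=\Gal(L/F')$, $d=[G:H]=[F':F]$, and $\Omega=G/H$, on which $G$ acts transitively with $|\Omega|=d$. By \Cref{splitting-of-prime}, for $v$ unramified in $L$ the primes of $F'$ above $v$ of residue degree $1$ correspond to the fixed points of $\Frob_v$ on $\Omega$, so (d) holds exactly when the conjugacy class of $\Frob_v$ meets $\Theta:=\{g\in G:g\text{ has a fixed point on }\Omega\}$, which is a union of conjugacy classes. By Burnside's orbit-counting lemma, $\sum_{g\in G}|\mathrm{Fix}_\Omega(g)|=|G|$ (one orbit), and since each term is $\le d$, we get $|\Theta|\ge|G|/d$. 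By the Chebotarev density theorem the primes $v$ with $\Frob_v\in\Theta$ have density $|\Theta|/|G|\ge 1/d$, and this is unchanged after imposing the finite and density-$0$ conditions (a)--(c), (e). Therefore $\mathrm{Density}(S^{\ord})\ge\mathrm{Density}(\mathcal{V})\ge\tfrac{1}{[F':F]}$.

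There is no serious obstacle here; the statement is elementary once set up correctly. The one point that needs care is obtaining the sharp constant $\tfrac{1}{[F':F]}$: it comes from counting primes of $F'$ of residue degree $1$ over $v$ --- equivalently, Frobenius elements with a fixed point on the coset space $G/H$ --- rather than the (smaller, density $\tfrac1{[L:F]}$) set of primes that split completely in $F'$, combined with the Burnside count above. One should also double-check that the potential-splitting hypothesis is invoked only at residue-degree-$1$ primes $v'$ and that discarding the finitely many ramified or excluded primes does not affect any density.
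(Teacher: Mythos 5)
Your proof is correct. The arithmetic heart is the same as the paper's: at a prime $v$ of $F$ of absolute degree $1$ with $p$ unramified in $K$ and with a prime $v'\mid v$ of $F'$ of relative degree $1$, the Frobenius at $v'$ is also a Frobenius at $v$, the characteristic polynomial splits over $\cl{O}_K$, and since $\epsilon(v)$ is a unit the valuations of the two roots at any $\mu\mid p$ sum to $v_\mu(p)=1$, forcing $v_\mu(\tr\rho(\Frob_v))=0$; the paper phrases this as a contradiction with $p$ being unramified in $K$ (after enlarging $S$), but it is the same computation. Where you genuinely diverge is the density bound. The paper never passes to the Galois closure: it counts absolute degree-$1$ primes $v'$ of $F'$ up to $X$ via Landau's prime ideal theorem ($\sim X/\log X$), notes that each $v$ lies below at most $[F':F]$ such $v'$, and divides. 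You instead work in the Galois closure $L/F$ with $G=\Gal(L/F)$, $H=\Gal(L/F')$, apply \Cref{splitting-of-prime} to identify relative degree-$1$ primes above $v$ with fixed points of $\Frob_v$ on $H\backslash G$, and use Burnside's orbit-counting lemma (transitivity gives $\sum_g|\mathrm{Fix}(g)|=|G|$, each term at most $[F':F]$) plus Chebotarev to get density $|\Theta|/|G|\geq 1/[F':F]$ for the conjugation-invariant set $\Theta$ of elements with a fixed point. The two arguments bound the density of essentially the same set of primes; the paper's route is more elementary (no Chebotarev, no group theory), while yours identifies the density exactly as $|\Theta|/|G|$, which can strictly exceed $1/[F':F]$, so it is marginally sharper when $F'/F$ is not "generic". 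All the small points you flag (integrality of the roots, discarding finitely many or density-zero primes, invoking the splitting hypothesis only at non-excluded $v'$) are handled correctly.
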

\begin{proof}
We first extend the finite set $S$ to include primes of $F$ that ramify in $F'/F$ or lie above rational primes that ramify in $K/\Q$.  Now, let $S'$ be a finite set of primes of $F'$ that contains all the ramified primes of $\rho':=\rho|_{\Gal(\ov{F}/F')}$ and all the primes that lie above primes in $S$.

Let $v'\notin S'$ be a prime of $F'$ of degree $1$, let $v$ be the prime of $F$ lying below $v'$, and let $p=\N(v')=\N(v)$ be the rational prime lying below $v'$. As $v'$ is of degree~$1$, the residue fields of $v'$ and $v$ coincide, so there exists a Frobenius lift at $v'$ in $\Gal(\ov{F}/F')$ such that it is also a Frobenius lift at $v$ in $\Gal(\ov{F}/F)$.
Hence,
\begin{align*}
    \det(XI-\rho(\Frob_{v'}))=\det(XI-\rho(\Frob_{v}))=X^2-a_vX+\epsilon(v)p=(X-\alpha_v)(X-\beta_v)
\end{align*}
for $a_v\in K$ and $\alpha_v,\beta_v\in K$ (by the assumptions). Suppose that $v\notin S^{\ord}$. Then, there exists a prime $\mu$ of $K$ with $\mu|p$ such that $\mu|(a_v)$. 
We thus have
\begin{align*}
    v_{\mu}(a_v)\geq 1\quad\text{ and }\quad v_{\mu}(\epsilon(v)p)\geq 1.
\end{align*}
These imply that $v_{\mu}(\alpha_v)\geq 1$ and $v_{\mu}(\beta_v)\geq 1$. As $\epsilon(v)$ is a unit,
\begin{align*}
    v_{\mu}(p)=v_{\mu}(\epsilon(v)p)=v_{\mu}(\alpha_v)+v_{\mu}(\beta_v)\geq 2,
\end{align*}
so $p$ ramifies in $K/\Q$. This contradicts that $v'\notin S'$ by the definition of $S'$. Thus, $v\in S^{\ord}$.

To summarize, if a prime $v$ of $F$ lies below a degree $1$ prime $v'\notin S'$ of $F'$, then $v\in S^{\ord}$, i.e.,
\begin{align*}
    S_0:=\{v\in\Sigma_F:v'|v\text{ for some degree $1$ prime $v'\notin S'$ of $F'$}\}\subseteq S^{\ord},
\end{align*}
so it suffices to show that $S_0$ has density greater than or equal to $\frac{1}{[F':F]}$. Note that each $v\in S_0$ may split into at most $[F':F]$ primes of $F'$ of degree $1$, so
\begin{align*}
    [F':F]\cdot\#\{v\in S_0:\N(v)\leq X\}\geq\#\{\text{degree $1$ prime $v'$ of $F'$}:\N(v')\leq X\}.
\end{align*}
On the other hand, by Landau's prime ideal theorem,
\begin{align*}
    \#\{v\in\Sigma_F:\N(v)\leq X\}\sim\frac{X}{\log X}\sim\#\{v'\in\Sigma_{F'}:\N(v')\leq X\}.
\end{align*}
Hence,
\begin{align*}
    \mathrm{Density}(S_0)=\lim_{X\to\infty}\frac{\#\{v\in S_0:\N(v)\leq X\}}{\#\{v\in\Sigma_F:\N(v)\leq X\}}&\geq\frac{1}{[F':F]}\lim_{X\to\infty}\frac{\#\{\text{degree $1$ prime $v'$ of $F'$}:\N(v')\leq X\}}{\#\{v\in\Sigma_F:\N(v)\leq X\}} \\
    &=\frac{1}{[F':F]}\lim_{X\to\infty}\frac{\#\{\text{degree $1$ prime $v'$ of $F'$}:\N(v')\leq X\}}{\#\{v'\in\Sigma_{F'}:\N(v')\leq X\}} \\
    &=\frac{1}{[F':F]}.
\end{align*}
\end{proof}

We are now ready to prove the result for CM modular forms of weight $2$. We will take all the facts on CM modular forms for granted and refer readers to \cite[Section~3~and~4]{Ri1977} for references.

\begin{corollary}
\label{ordinary-primes-of-cm-modular-forms}
The set of ordinary primes of any CM newform of weight $2$ has density $1/2$.
\end{corollary}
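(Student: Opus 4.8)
The plan is to deduce \Cref{ordinary-primes-of-cm-modular-forms} from \Cref{potent-split-implies-pos-dens-ordinary}, in the spirit of \cite[Remark~12]{Fi2024}. Let $f$ be a CM newform of weight $2$, level $N$, and nebentypus $\epsilon$, with coefficient field $K_f$. I will use the classical dictionary for CM forms: there exist an imaginary quadratic field $M$ and an algebraic Hecke character $\psi$ of $M$ of infinity type $z\mapsto z$ and some conductor $\fr{m}$ such that, for each rational prime $\ell$ and each prime $\lambda_0\mid\ell$ of $K_f$, one has $\rho_{f,\lambda_0}\cong\mathrm{Ind}_{\Gal(\ov{\Q}/M)}^{\Gal(\ov{\Q}/\Q)}\psi_{\lambda_0}$, where $\psi_{\lambda_0}$ is the $\lambda_0$-adic avatar of $\psi$; in particular $a_p(f)=\psi(\fr{p})+\psi(\ov{\fr{p}})$ whenever $p=\fr{p}\ov{\fr{p}}$ splits in $M$ with $\fr{p}\nmid N\fr{m}$, and $a_p(f)=0$ whenever $p\nmid N$ is inert in $M$. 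Let $L$ denote the number field generated over $\Q$ by $K_f$ together with all the values $\psi(\fr{a})$ (over integral ideals $\fr{a}$ of $M$ prime to $\fr{m}$); then $K_f\subseteq L$, $M\subseteq L$, and $[L:\Q]<\infty$.

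I would then squeeze the density of the ordinary primes of $f$ between $\tfrac12$ and $\tfrac12$. For the upper bound: every prime $p\nmid N$ inert in $M$ has $a_p(f)=0$, hence $p\mid\N_{K_f/\Q}(a_p(f))=0$ and $p$ is not ordinary, so the ordinary primes of $f$ lie among the primes splitting or ramifying in $M$, a set of density $\tfrac12$. For the lower bound I would apply \Cref{potent-split-implies-pos-dens-ordinary} with $F=\Q$, $K=L$, $\rho$ the composite of $\rho_{f,\lambda_0}$ with $\GL_2(K_{f,\lambda_0})\hookrightarrow\GL_2(L_\lambda)$ for a prime $\lambda\mid\ell$ of $L$ above $\lambda_0$, $\epsilon$ the nebentypus (a finite-order Hecke character of $\Q$ with values in $K_f^\times\subseteq L^\times$), $S=\{v\mid N\ell\}$, and $F'=M$: off $S$ one has $\det(XI-\rho(\Frob_p))=X^2-a_p(f)X+\epsilon(p)p$, and for every prime $v'$ of $M$ prime to $N\ell$ the restriction $\rho|_{\Gal(\ov{\Q}/M)}$ becomes, after extending scalars, $\psi_{\lambda_0}\oplus\psi_{\lambda_0}^c$, so $\det(XI-\rho|_{\Gal(\ov{\Q}/M)}(\Frob_{v'}))=(X-\psi(v'))(X-\psi(\ov{v'}))$ splits into linear factors over $L$ (with $\ov{v'}=v'$ when $v'$ lies over an inert prime). \Cref{potent-split-implies-pos-dens-ordinary} then yields that $\{p\notin S:\mu\nmid(a_p(f))\text{ for all }\mu\in\Sigma_L\text{ with }\mu\mid p\}$ has density $\geq 1/[M:\Q]=\tfrac12$; and since $\N_{L/\Q}(a_p(f))=\N_{K_f/\Q}(a_p(f))^{[L:K_f]}$ for each rational prime $p$, divisibility of the left side by $p$ is equivalent to that of $\N_{K_f/\Q}(a_p(f))$, so this set agrees, up to $S$, with the set of ordinary primes of $f$. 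The two bounds then force the density to be exactly $\tfrac12$.

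The step I expect to require the most care is the one that forces the detour through $L$: the Frobenius eigenvalues $\psi(v'),\psi(\ov{v'})$ of $\rho_{f,\lambda_0}|_{\Gal(\ov{\Q}/M)}$ need not lie in $K_f$ (they generate the larger field $L\supseteq MK_f$), so \Cref{potent-split-implies-pos-dens-ordinary} cannot be applied with $K=K_f$ and $F'=M$ directly; passing to $K=L$ is harmless precisely because $p\mid\N_{L/\Q}(a_p(f))\iff p\mid\N_{K_f/\Q}(a_p(f))$, which is what makes the output of the proposition a statement about the genuine ordinary primes of $f$. The only other input, routine but worth stating, is the compatibility of $\psi$ with its $\lambda_0$-adic avatar, i.e.\ $\psi_{\lambda_0}(\Frob_{v'})=\psi(v')$ and $\psi_{\lambda_0}^c(\Frob_{v'})=\psi(\ov{v'})$ at unramified $v'$; granting this, checking the hypotheses of \Cref{potent-split-implies-pos-dens-ordinary} is straightforward.
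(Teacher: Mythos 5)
Your proposal is correct and follows essentially the same route as the paper's own proof: the upper bound from $a_p(f)=0$ at primes inert in the CM field, and the lower bound by applying \Cref{potent-split-implies-pos-dens-ordinary} with $F'$ the CM field and the coefficient field enlarged to contain the values of the Hecke character $\psi$. Your extra care in passing from $K_f$ to $L$ and noting $p\mid\N_{L/\Q}(a_p(f))\iff p\mid\N_{K_f/\Q}(a_p(f))$ just makes explicit what the paper leaves implicit in choosing ``a number field $K$ where the image of $\psi$ lies.''
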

\begin{proof}
Let $f$ be a CM newform of weight $2$. Let $K_f$ be the coefficient field of $f$, let $\lambda$ be a prime of~$K_f$, and let $\rho_{f,\lambda}:\Gal(\ov{\Q}/\Q)\rightarrow\GL_2(K_{f,\lambda})$ be the $\lambda$-adic representation attached to $f$.

First, since there exists an imaginary quadratic field $F'$ such that $a_p(f)=0$ for all but finite many primes $p$ that are inert in $F'$, it follows that the set of ordinary primes of $f$ has density at most~$1/2$. For the lower bound, note that there exists an algebraic Hecke character $\psi$ of~$F'$ such that $\rho_{f,\lambda}\simeq\mathrm{Ind}_{F'/\Q}\psi$ (after extending scalars). In particular, there exists a number field $K$ (where the image of $\psi$ lies) such that for all but finite many primes $v'$ of $F'$, $\rho':=\rho_{f,\lambda}|_{\Gal(\ov{\Q}/F')}$ is unramified at $v'$ and $\det(XI-\rho'(\Frob_{v'}))$ splits into linear factors
over $K$. As $f$ is of weight $2$, the data $(\rho_{f,\lambda},F'/\Q,K)$ satisfies the condition in \Cref{potent-split-implies-pos-dens-ordinary}, so it follows that the set of ordinary primes of $f$ has density at least $1/2$. The result then follows.
\end{proof}

\end{appendix}

\bibliographystyle{amsalpha}
\bibliography{Ref}

\end{document}